\crefname{algorithm}{Algorithm}{Algorithms}
\Crefname{algorithm}{Framework}{Frameworks}
\Crefname{exm}{Example}{Examples}
\Crefname{thm}{Theorem}{Theorems}
\Crefname{lem}{Lemma}{Lemmas}
\Crefname{prop}{Proposition}{Propositions}
\Crefname{assume}{Assumption}{Assumptions}
\Crefname{defi}{Definition}{Definitions}
\Crefname{rem}{Remark}{Remarks}
\Crefname{cond}{Condition}{Conditions}
\Crefname{ALC@unique}{Line}{Lines}
\crefname{ALC@unique}{line}{lines}
\newcounter{ct}\forloop{ct}{1}%
\crefname{enum\roman{ct}}{}{}}
\newcommand{\st}{\mathrm{s.}~\mathrm{t.}}
\newcommand{\lrbrace}[1]{\left\{#1\right\}}
\newcommand{\lrbracket}[1]{\left(#1\right)}
\newcommand{\lrsquare}[1]{\left[#1\right]}
\newcommand{\T}{\top}
\newcommand{\trace}{\mathrm{Tr}}
\newcommand{\eps}{\varepsilon}
\newcommand{\calS}{\mathcal{S}}
\newcommand{\mb}[1]{\mathbf{#1}}
\newcommand{\dom}{\mathrm{dom}}
\newcommand{\norm}[1]{\left\Vert#1\right\Vert}
\newcommand{\snorm}[1]{\Vert#1\Vert}
\newcommand{\abs}[1]{\left|#1\right|}
\newcommand{\inner}[1]{\left\langle#1\right\rangle}
\newcommand{\palm}{\textup{\textsf{PALM}}}
\newcommand{\palme}{\textup{\textsf{PALM-E}}}
\newcommand{\palmi}{\textup{\textsf{PALM-I}}}
\newcommand{\palmf}{\textup{\textsf{PALM-F}}}
\newcommand{\ssncg}{\textup{\textsf{ssncg}}}
\newcommand{\hp}{\textup{\textsf{hp}}}
\newcommand{\sadmm}{\textup{\textsf{sadmm}}}
\newcommand{\R}{\mathbb{R}}
\newcommand{\N}{\mathbb{N}}
\newcommand{\Diag}{\mathrm{Diag}}
\newcommand{\dist}{\mathrm{dist}}
\newcommand{\bbar}{\overline}
\newcommand{\D}{\mathrm{D}}
\newcommand{\blam}{\bm{\lambda}}
\newcommand{\h}{\mathbf{h}}
\newcommand{\uu}{\mathbf{u}}
\newcommand{\w}{\mathbf{w}}
\newcommand{\x}{\mathbf{x}}
\newcommand{\y}{\mathbf{y}}
\newcommand{\z}{\mathbf{z}}
\newcommand{\PP}{\mathscr{P}}
\newcommand{\one}{\mathbf{1}}
\newcommand{\tabincell}[2]{\begin{tabular}{@{}#1@{}}#2\end{tabular}}
\newtheorem{assume}{Assumption}[section]
\newtheorem{cond}{Condition}[section]
\newtheorem{coro}{Corollary}[section]
\newtheorem{defi}{Definition}[section]
\newtheorem{lem}{Lemma}[section]
\newtheorem{prop}{Proposition}[section]
\newtheorem{thm}{Theorem}[section]
\newtheorem{exm}{Example}[section]
\newtheorem{rem}{Remark}[section]
\DeclareMathOperator*{\argmin}{arg\,min}
\numberwithin{equation}{section}
\title{The Convergence Properties of Infeasible Inexact Proximal Alternating Linearized Minimization}
\author{
	Yukuan Hu\thanks{State Key Laboratory of Scientific and Engineering Computing, Academy of Mathematics and Systems Science, Chinese Academy of Sciences, and University of Chinese Academy of Sciences, China (\href{mailto:ykhu@lsec.cc.ac.cn}{ykhu@lsec.cc.ac.cn}, \href{mailto:liuxin@lsec.cc.ac.cn}{liuxin@lsec.cc.ac.cn}). The research was supported in part by the National Natural Science Foundation of China (No. 12125108, 11971466, 11991021, 11991020, 12021001, and 11688101), Key Research Program of Frontier Sciences, Chinese Academy of Sciences (No. ZDBS-LY-7022), and the CAS AMSS-PolyU Joint Laboratory in Applied Mathematics.}
	\and Xin Liu\footnotemark[1]
}
\date{\today}
\begin{document}
	
	\maketitle
	
	\begin{abstract}
		The proximal alternating linearized minimization method (\palm) suits well for solving block-structured optimization problems, which are ubiquitous in real applications. In the cases where subproblems do not have closed-form solutions, e.g., due to complex constraints, infeasible subsolvers are indispensable, giving rise to an infeasible inexact \palm~(\palmi). Numerous efforts have been devoted to analyzing feasible \palm, while little attention has been paid to \palmi. The usage of \palmi~thus lacks theoretical guarantee. The essential difficulty of analyses consists in the objective value nonmonotonicity induced by the infeasibility. We study in the present work the convergence properties of \palmi. In particular, we construct a surrogate sequence to surmount the nonmonotonicity issue and devise an implementable inexact criterion. Based upon these, we manage to establish the stationarity of any accumulation point and, moreover, show the iterate convergence and the asymptotic convergence rates under the assumption of the \L ojasiewicz property. The prominent advantages of \palmi~on CPU time are illustrated via numerical experiments on problems arising from quantum physics and 3D anisotropic frictional contact. \\[-0.5em]
		
		\par\noindent\textbf{Keywords.} Proximal alternating linearized minimization; infeasibility; nonmonotonicity; surrogate sequence; inexact criterion; iterate convergence; asymptotic convergence rates; \L ojasiewicz property; quantum physics; 3D anisotropic frictional contact\\[-0.5em]
		
		\par\noindent\textbf{AMS subject classifications.} 49M27, 65K05, 90C26, 90C30
	\end{abstract}

	\section{Introduction}\label{sec:introduction}
	
	\par In this work, we focus on the minimization problem with block structure as
	\begin{equation}
		\min_{\z\in\otimes_{i=1}^n\R^{m_i}}~f(\x_1,\ldots,\x_n),\quad\st~~\x_i\in\calS_i:=\{\w_i\in\R^{m_i}:\h_i(\w_i)\le0\},~i=1,\ldots,n,
		\label{eqn:original prob}
	\end{equation}
	where $f:\otimes_{i=1}^n\R^{m_i}\to\R$ is differentiable and not necessarily convex, $\z:=(\x_1,\ldots,\x_n)$; for $i=1,\ldots,n$, $\x_i\in\R^{m_i}$, $\h_i:=(h_{i,1},\ldots,h_{i,p_i})^\T:\R^{m_i}\to\R^{p_i}$ is convex differentiable, and $m_i$, $p_i\in\N$. Problems sharing this form are ubiquitous; see, e.g., \cite{bonettini2018block,he2010approximation,hu2021global,kuvcera2008convergence,liu2020topology} and the references within. We also adopt an extended-valued form of \cref{eqn:original prob}
	\begin{equation}
		\min_{\z\in\otimes_{i=1}^n\R^{m_i}}~F(\x_1,\ldots,\x_n):=f(\x_1,\ldots,\x_n)+\sum_{i=1}^n\delta_{\calS_i}(\x_i),
		\label{eqn:extended form}
	\end{equation}
	where $\delta_{\calS_i}$ stands for the indicator function of $\calS_i$, i.e., $\delta_{\calS_i}(\mb{w})$ equals $0$ if $\mb{w}\in\calS_i$ otherwise $\infty$.
	
	\par In view of the block structure of \cref{eqn:original prob}, we consider the Proximal Alternating Linearized Minimization method (\palm); see \Cref{frame:ipalm}, where we impose flexible conditions on the iterate sequence.
	
	\begin{algorithm}[htbp]
		\floatname{algorithm}{Framework}
		\centering
		\caption{\palm~for solving \cref{eqn:original prob}}
		\label{frame:ipalm}
		\begin{algorithmic}[1]
			\REQUIRE{Initial point $\z^{(0)}=(\x_i^{(0)})_{i=1}^n\in\otimes_{i=1}^n\R^{m_i}$, proximal parameters $\{\sigma_i^{(0)}>0\}_{i=1}^n$.}
			\ENSURE{An approximate solution $\z^{(k)}:=(\x_i^{(k)})_{i=1}^n\in\otimes_{i=1}^n\R^{m_i}$.}
			\STATE{Set $k:=0$.}
			\WHILE{\textit{certain conditions not satisfied}}
			\FOR{$i=1,\ldots,n$}
			\STATE{Solve the $i$-th proximal linearized subproblem\vspace{-3mm}
				\begin{equation}
					\min_{\x_i\in\calS_i}~\inner{\nabla_if(\x_{< i}^{(k+1)},\x_{\ge i}^{(k)}),\x_i-\x_i^{(k)}}+\frac{\sigma_i^{(k)}}{2}\snorm{\x_i-\x_i^{(k)}}^2
					\label{eqn:prox-linear subprob}
					\vspace{-2mm}
				\end{equation}
				to obtain $\x_i^{(k+1)}\in\R^{m_i}$ fulfilling \textit{certain conditions}. 
				\STATE{Update the $i$-th proximal parameter $\sigma_i^{(k)}$ to $\sigma_i^{(k+1)}>0$ if necessary.}
			}
			\ENDFOR
			\STATE{Set $k:=k+1$.}
			\ENDWHILE
		\end{algorithmic}
	\end{algorithm}
	
	\par When the subproblem \cref{eqn:prox-linear subprob} is exactly solved, we obtain the Exact \palm~(\palme). With properly chosen proximal parameters, one could derive sufficient reduction over the objective value sequence. Based upon this point, the stationarity of any accumulation point follows. This methodology applies to more general frameworks, such as the block successive minimization in \cite{razaviyayn2013unified} and the Bregman-distance-based block coordinate proximal gradient methods in \cite{hua2016block,wang2018block}. Furthermore, with the aid of the \L ojasiewicz property that is shared by a broad swath of functions, one could obtain the iterate convergence in more generic settings; see, e.g., \cite{bolte2014proximal2,Xu2013A2}.
	
	\par It is not difficult to check that solving \cref{eqn:prox-linear subprob} in \Cref{frame:ipalm} amounts to projecting the point
	\begin{equation*}
		\tilde\x_i^{(k)}:=\x_i^{(k)}-\frac{1}{\sigma_i^{(k)}}\nabla_if(\x_{<i}^{(k+1)},\x_{\ge i}^{(k)})
	\end{equation*}
	onto $\calS_i$. More often, however, the projection is not of closed-form expression. In these contexts, \textit{inexactly} solving \cref{eqn:prox-linear subprob} becomes a much more pragmatic option. Efficient subsolvers for \cref{eqn:prox-linear subprob} could hence be brought to bear. 
	
	\par When the subsolvers inexactly solve \cref{eqn:prox-linear subprob} and yield $\x_i^{(k)}\in\calS_i$ throughout iterations, we obtain the Feasible inexact \palm~(\palmf). Most works in this setting \textit{enforce the monotonicity} of the objective value sequence. Some of them (repeatedly), in one outer iteration, solve the subproblem inexactly to obtain a descent direction and then perform line search; see, e.g., \cite{bonettini2018block,yang2019inexact}. In \cite{hua2016block}, the authors treat the solution error as an additional term in the kernel function defining the Bregman distance, and then impose assumptions on the solution errors to invoke the results established in the exact settings. In \cite{frankel2015splitting,ochs2019unifying}, the authors put flexibility in solving \cref{eqn:prox-linear subprob} in the sense that the relative error conditions are relaxed while maintaining the sufficient reduction property. 
	
	\par In contrast, little attention has been paid to the Infeasible inexact \palm~(\palmi), where the subsolvers inexactly solve \cref{eqn:prox-linear subprob} but not necessarily give $\x_i^{(k)}\in\calS_i$. However, when the constraints describing $\{\calS_i\}_{i=1}^n$ are complicated, infeasible subsolvers, such as (primal-)dual or penalty methods, are indispensable. To illustrate, we list two instances below, along with some state-of-art algorithms for computing the projections. 
	
	\begin{exm}[Linear constraints]\label{exm:linear constraint}
		The feasible region $\calS_i$ is the Birkhoff polytope $\calS_i:=\{W\in\R^{m_i\times m_i}:W\one=\one,W^\T\one=\one,W\ge0\}$, where $\one$ stands for the all-one vector in $\R^{m_i}$. This type of feasible region shows up frequently in applications such as optimal transport problems \cite{peyre2019computational} and electronic structure calculation \cite{hu2021global}. Since the number of constraints describing $\calS_i$ is much less than the underlying space dimension (given even moderate $m_i$), it is more reasonable to solve the subproblem \cref{eqn:prox-linear subprob} from the dual perspective. To this end, we could invoke the semismooth Newton method proposed in \cite{li2020efficient}. By exploiting the structure of $\calS_i$, high efficiency can be achieved \cite{hu2021global}. Nevertheless, the recovered primal solution is infeasible. 
	\end{exm}
	
	\begin{exm}[Nonlinear constraints]\label{exm:nonlinear constraint}
		The feasible region $\calS_i$ is an ellipsoid in $\R^{m_i}$, namely, $\calS_i:=\{\w\in\R^{m_i}:\frac{1}{2}\w^\T A_i\w+\mb{b}_i^\T\w\le\alpha_i\}$, where $I\ne A_i\in\R^{m_i\times m_i}$ is positive definite symmetric, $\mb{b}_i\in\R^{m_i}$, and $\alpha_i>0$. Projecting a point onto an ellipsoid emerges as one of the fundamental problems in convex analysis and numerical algorithms with applications in topology optimization \cite{liu2020topology} and 3D contact problems with an anisotropic friction \cite{kuvcera2008convergence} as well as relations to polynomial optimization \cite{he2010approximation}, just to mention a few. When $\mb{b}_i=0$, it is also related to the trust region subproblem in nonlinear optimization \cite{powell1991trust}. We refer interested readers to a recent work \cite{jia2017comparison}, where an alternating direction method of multipliers is proposed to solve the reformulated problem. The primal variables are then not necessarily feasible upon termination. The proposed method is reported to outperform the existent feasible one in \cite{dai2006fast}.
	\end{exm}
	
	\par Owing to the infeasibility, the objective value sequence is not ensured to be monotonic, while the sufficient reduction of the objective value is presumably crucial in proving the stationarity of any accumulation point. The only work exploring the convergence properties of \palmi~goes to \cite{frankel2015splitting}. The obtained results, however, might be of only theoretical values. The authors impose the following hypothesis: there exist $\beta_1$, $\beta_2>0$ such that, for $i=1,\ldots,n$ and $k\ge0$,
	\begin{equation}
		\left\{\begin{array}{l}
			\sum_{j=1}^{i-1}\snorm{\x_j^{(k+1)}-\bar\x_j^{(k+1)}}+\sum_{j=i}^n\snorm{\x_j^{(k)}-\bar\x_j^{(k)}}\le\beta_1\snorm{\bar\x_i^{(k+1)}-\bar\x_i^{(k)}},\\
			\inner{\x_i^{(k)}-\bar\x_i^{(k)},\bar\x_i^{(k+1)}-\bar\x_i^{(k)}}\le\beta_2\snorm{\bar\x_i^{(k+1)}-\bar\x_i^{(k)}}^2,
		\end{array}\right.
		\label{eqn:existing inexact criterion}
	\end{equation}
	where $\bar\x_i^{(k+1)}$ is the unique solution of \cref{eqn:prox-linear subprob}, defined as
	\begin{equation}
		\bar{\x}_i^{(k+1)}:=\argmin_{\x_i\in\calS_i}~\inner{\nabla_if(\x_{<i}^{(k+1)},\x_{\ge i}^{(k)}),\x_i-\x_i^{(k)}}+\frac{\sigma_i^{(k)}}{2}\snorm{\x_i-\x_i^{(k)}}^2.
		\label{eqn:block optimal}
	\end{equation}
	Based upon \cref{eqn:existing inexact criterion}, they establish a sufficient reduction result over the objective value sequence $\{f(\bar\z^{(k)})\}$, where $\bar\z^{(k)}:=(\bar{\x}_1^{(k)},\ldots,\bar\x_n^{(k)})$. It is unclear how to fulfill \cref{eqn:existing inexact criterion} in practice for the reasons that (i) $\bar\x_i^{(k)}$ and $\bar\x_i^{(k+1)}$ cannot be computed, not to mention $\snorm{\bar\x_i^{(k+1)}-\bar\x_i^{(k)}}$; (ii) $\snorm{\bar\x_i^{(k+1)}-\bar\x_i^{(k)}}$ is needed for obtaining $\{\x_j^{(k)}\}_{j=i}^n$. Unfortunately, the authors in \cite{frankel2015splitting} do not discuss these points. In consequence, the convergence properties of \palmi~remain to be investigated, particularly with \textit{implementable inexact criteria}. This is essential in providing a theoretical guarantee for the usage of efficient infeasible subsolvers.
	
	\subsection{Contributions}
	
	\par In this work, we establish the convergence properties of \palmi~for solving \cref{eqn:original prob}. In particular, we 
	\begin{enumerate}[label=(\roman*),topsep=0pt, parsep=0pt, itemsep=0pt]
		\item control the solution errors when solving \cref{eqn:prox-linear subprob} with a prescribed nonnegative sequence $\{\eps^{(k)}\}$ and an error bound that is computable for any subsolvers. Our inexact criterion is thus much more pragmatic than that in \cite{frankel2015splitting};
		
		\item construct a nonincreasing surrogate sequence $\{v^{(k)}\}$ to surmount the objective value nonmonotonicity issue. The objective value sequence is allowed to fluctuate, favoring more extensive flexibility than most existing works;
		
		\item establish the convergence properties, including the iterate convergence to stationarity and the asymptotic iterate convergence rates, of \palmi~with the help of the \L ojasiewicz property of $F$ in \cref{eqn:extended form}. These results are new to the best of our knowledge;
		
		\item illustrate the considerable advantages of \palmi~on CPU time over \palme~and \palmf~through numerical experiments on problems arising from quantum physics and 3D anisotropic frictional contact.
	\end{enumerate}
	
	\par Before concluding this subsection, we gather some of the established asymptotic convergence rates in \Cref{tab:rates} to showcase the comparison with existing works, where $\theta$ is the \L ojasiewicz exponent of $F$ associated with a compact set.
	\begin{table}[htbp]
		\centering
		\caption{Asymptotic convergence rates of \palm~under different settings.}
		\label{tab:rates}
		%\footnotesize
		\begin{tabular}{cclll}
			\toprule
			$\theta$ & $\eps^{(k)}$ & Extra assumptions & Rates & References \\\midrule
			\multirow{3}{*}[-1.5ex]{$0$} & $0$ & - & Finite termination & \cite{bolte2014proximal2,Xu2013A2} \\\cmidrule[0.01mm]{2-5}
			& $\tilde\rho^k$ & $\tilde\rho\in(0,1)$ & $\mathcal{O}(\rho_1^k)$, where $\rho_1\in(0,1)$ & This paper (\Cref{thm:convergence rates exponential})\\
			& $\frac{1}{(k+1)^\ell}$ & $\ell\in(1,\infty)$ & $\mathcal{O}\lrbracket{k^{-(\ell-1)}}$ & This paper (\Cref{thm:convergence rates sublinear}) \\\midrule[0.1mm]
			\multirow{3}{*}[-1ex]{$(0,\frac{1}{2}]$} & $0$ & - & $\mathcal{O}(\rho_2^k)$, where $\rho_2\in(0,1)$ & \cite{bolte2014proximal2,Xu2013A2} \\\cmidrule[0.01mm]{2-5}
			& $\tilde\rho^k$ & $\tilde\rho\in(0,1)$ & $\mathcal{O}(\rho_3^k)$, where $\rho_3\in(0,1)$ & This paper (\Cref{thm:convergence rates exponential})\\
			& $\frac{1}{(k+1)^\ell}$ & $\ell\in(1,\infty)$ & $\mathcal{O}\lrbracket{k^{-(\ell-1)}}$ & This paper (\Cref{thm:convergence rates sublinear})\\\midrule[0.1mm]
			\multirow{3}{*}[-4.5ex]{$(\frac{1}{2},1)$} & $0$ & - & $\mathcal{O}\lrbracket{k^{-\frac{1-\theta}{2\theta-1}}}$ & \cite{bolte2014proximal2,Xu2013A2}\\\cmidrule[0.01mm]{2-5}
			& $\tilde\rho^k$ & $\tilde\rho\in(0,1)$ & $\mathcal{O}\lrbracket{k^{-\frac{1-\theta}{2\theta-1}}}$ & This paper (\Cref{thm:convergence rates exponential})\\
			& $\frac{1}{(k+1)^\ell}$ & $\ell\in(1,\infty)$ & \tabincell{l}{$\mathcal{O}\lrbracket{k^{-\frac{1-\theta}{2\theta-1}}}$ if $\ell\ge\frac{\theta}{2\theta-1}$\\$\mathcal{O}\lrbracket{k^{-(\ell-1)}}$ if $\ell<\frac{\theta}{2\theta-1}$} & This paper (\Cref{thm:convergence rates sublinear})\\\bottomrule
		\end{tabular}
	\end{table}
	
	\subsection{Notations and Organization}\label{subsec:notation}
	
	\par This paper presents scalars, vectors, and matrices by lower-case letters, bold lower-case letters, and upper-case letters, respectively. The notation $\one$ stands for the all-one vector with proper dimension. The notations $\inner{\cdot,\cdot}$ and $\snorm{\cdot}$ calculate, respectively, the standard inner product and the norm of vectors in the ambient Euclidean space. We use $\Diag(\cdot)$ to form a diagonal matrix with the input vector. 
	
	\par We use subscripts to denote the components or blocks of vectors or matrices; e.g., $\x_i$ is the $i$-th variable block. Occasionally for brevity, we make abbreviation for the aggregation of variable blocks; e.g., $\x_{<i}:=(\x_1,\ldots,\x_{i-1})$ and $\x_{>i}:=(\x_{i+1},\ldots,\x_n)$ (clearly, $\x_{<0}$, $\x_{>n}$ are null variable blocks, which may be used for notational ease). Likewise, we can define $\x_{\le i}$, $\x_{\ge i}$, $\x_{(j,i)}$, $\x_{(j,i]}$, $\x_{[j,i)}$, and $\x_{[j,i]}$ (the latter four are also null if the index sets in the subscript are empty). 
	
	\par For a function $h$, $\nabla h$ (resp. $\partial h$) is the gradient (resp. subdifferential) of $h$ at certain point where $h$ is differentiable (resp. subdifferentiable). We add a subscript to indicate the block to which the derivative is taken with respect; e.g., $\nabla_i$. For a differentiable mapping $\h:\R^m\to\R^p$, we denote by $\nabla\h:\R^m\to\R^{m\times p}$ its Jacobian. The notation $\delta_{\calS}$ stands for the indicator function of a set $\calS$, i.e., $\delta_{\calS}(\mb{w})$ equals $0$ if $\mb{w}\in\calS$ otherwise $\infty$. We denote the effective domain of a function $h$ by $\dom(h):=\{\y:h(\y)<\infty\}$. With a slight abuse of notation, the domain of its subdifferential is $\dom(\partial h):=\{\y:\partial h(\y)\ne\emptyset\}$.
	
	\par Given a set $\calS$ and a point $\mb{w}$, $\dist(\mb{w},\calS):=\inf_{\mb{w}'\in\calS}\snorm{\mb{w}-\mb{w}'}$ stands for the distance from $\mb{w}$ to $\calS$. If the set $\calS$ is nonempty closed, we define the projection operator $\PP_{\calS}$ onto $\calS$ as $\PP_{\calS}(\mb{w})\in\argmin_{\mb{w}'\in\calS}\snorm{\mb{w}-\mb{w}'}$. The notation ``$\otimes$'' denotes the Cartesian product of sets or spaces. The notation $B_{\eta}(\x)$ with $\eta>0$ refers to the closed ball in the ambient space centered at $\x$ with radius $\eta$.
	
	\par We organize this paper as follows: in \cref{sec:preliminaries}, we present some definitions used throughout this work and introduce the \L ojasiewicz property. The complete description of \palmi~is described in \cref{sec:palmi}, including details on the inexact criterion in use. We establish the global convergence properties of \palmi~in \cref{sec:convergence of ipalm}, including a weak and a strong form. We analyze the asymptotic convergence rates of \palmi~under different settings in \cref{sec:convergence rates of ipalm}. Numerical experiments are detailed in \cref{sec:numerical experiments}. Some concluding remarks are drawn in \cref{sec:conclusions}.
	
	\section{Preliminaries}\label{sec:preliminaries}
	
	\par We collect several notions from convex analysis as well as the \L ojasiewicz property in this section. 
	
	\begin{defi}[\cite{rockafellar2009variational2}]\label{def:subdifferential}
		Let $G:\mathbb{E}\to(-\infty,\infty]$ be a proper closed function, where $\mathbb{E}$ is an Euclidean space. For a given $\x\in\dom(G)$, the Fr\'echet subdifferential of $G$ at $\x$, denoted by $\partial G(\x)$, is defined as
		$$\partial G(\x):=\lrbrace{\uu\in\mathbb{E}:\liminf_{\y\ne\x,\y\to\x}\frac{G(\y)-G(\x)-\inner{\uu,\y-\x}}{\snorm{\y-\x}}\ge0}.$$
		When $\x\notin\dom(G)$, we simply set $\partial G(\x)=\emptyset$. When $\partial G(\x)$ is a singleton, we say that $G$ is Fr\'echet differentiable at $\x$ and denote the derivative by $\nabla G(\x)$. 
	\end{defi}
	
	\begin{rem}\label{rem:subdiff}
		\begin{enumerate}[label=(\roman*),topsep=0pt, parsep=0pt, itemsep=0pt]
			\item If $G:\mathbb{E}\to(-\infty,\infty]$ is proper closed convex, then, 
			$$\partial G(\x)=\lrbrace{\uu\in\mathbb{E}:G(\y)-G(\x)\ge\inner{\uu,\y-\x},~\forall~\y\in\mathbb{E}},\quad\forall~\x\in\dom(G).$$
			
			\item If $G:\mathbb{E}\to(-\infty,\infty]$ and $H:\mathbb{E}\to(-\infty,\infty]$ are proper closed functions, and $G$ is Fr\'echet differentiable at $\x$, then $\partial(G+H)(\x)=\nabla G(\x)+\partial H(\x)$. 
			
			\item If $G:\mathbb{E}\to(-\infty,\infty]$ is proper closed and $0\in\partial G(\x)$, we call $\x$ a stationary point of $G$. 
		\end{enumerate}
	\end{rem}
	
	\par With the definition of subdifferential in place, we recall the \L ojasiewicz property given in \cite{attouch2009convergence}. The \L ojasiewicz property is introduced first in \cite{lojasiewicz1993sur} on the real analytic functions, and then is extended to the functions on the $o$-minimal structure in \cite{kurdyka1998gradients} and to the nonsmooth subanalytic functions in \cite{bolte2007lojasiewicz} under the name of Kurdyka-\L ojasiewicz property afterwards \cite{attouch2010proximal,bolte2014proximal2,Xu2013A2}.
	
	\begin{defi}[\cite{attouch2009convergence}]\label{def:KL property}
		Let $G:\mathbb{E}\to(-\infty,\infty]$ be a proper closed function, where $\mathbb{E}$ is an Euclidean space. The function $G$ is said to have the \L ojasiewicz property at some stationary point $\bar\x$ if there exist $c>0$, $\theta\in[0,1)$, and $\eta>0$ such that, for any $\x\in B_{\eta}(\bar\x)$,
		$$\abs{G(\x)-G(\bar\x)}^\theta\le c\cdot\dist(0,\partial G(\x)),$$
		where we adopt the convention $0^0=0$ if $\theta=0$, and therefore, if $\abs{G(\x)-G(\bar\x)}^0=0$, we have $G(\x)=G(\bar\x)$. We call $\theta$ the \L ojasiewicz exponent of $G$ at $\bar\x$.
	\end{defi}
	
	\begin{rem}
		Existing works have revealed some valid examples. For instance, the real-analytic functions \cite{lojasiewicz1963propriete}, the convex functions fulfilling certain growth conditions \cite{bolte2007lojasiewicz}, and the semialgebraic functions \cite{attouch2010proximal}. We refer readers to \cite{attouch2010proximal} for a comprehensive collection. Notably, the class of semialgebraic functions covers a wide range of functions commonly used by the optimization community.
	\end{rem}
	
	\par In \cite{attouch2009convergence}, the authors provide the following uniformized version of the \L ojasiewicz property, which could be shown using the Heine-Borel theorem.
	
	\begin{lem}[\cite{attouch2009convergence}]\label{lem:KL}
		Let $G: \mathbb{E}\to(-\infty,\infty]$ be a proper closed function, where $\mathbb{E}$ is an Euclidean space. Let $\Omega\subseteq\mathbb{E}$ be a connected compact set consisting of the stationary points of $G$. Assume that $G$ has the \L ojasiewicz property at each stationary point. Then $G$ is constant on $\Omega$ and there exist $c$, $\eta>0$, and $\theta\in[0,1)$  such that, for any $\bar\x\in\Omega$ and $\x\in\{\y\in\mathbb{E}:\dist(\y,\Omega)\le\eta\}$,
		$$\abs{G(\x)-G(\bar\x)}^\theta\le c\cdot\dist(0,\partial G(\x)).$$
		We call $\theta$ the \L ojasiewicz exponent of $G$ (associated with $\Omega$). 
	\end{lem}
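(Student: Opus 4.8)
The plan is to first promote the pointwise \L ojasiewicz property at each point of $\Omega$ into the constancy of $G$ on $\Omega$, and then to stitch the finitely many local inequalities into one uniform inequality on a tube around $\Omega$ by a Heine-Borel covering argument.

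First I would establish that $G$ is constant on $\Omega$. Fix $\bar\x\in\Omega$ and let $c_{\bar\x}$, $\theta_{\bar\x}$, $\eta_{\bar\x}$ be the constants granted by \Cref{def:KL property} at $\bar\x$. Any other point $\bar\x'\in\Omega\cap B_{\eta_{\bar\x}}(\bar\x)$ is stationary, so $0\in\partial G(\bar\x')$ and hence $\dist(0,\partial G(\bar\x'))=0$; substituting $\x=\bar\x'$ into the local inequality yields $\abs{G(\bar\x')-G(\bar\x)}^{\theta_{\bar\x}}\le0$, which by the convention in \Cref{def:KL property} forces $G(\bar\x')=G(\bar\x)$ (this covers $\theta_{\bar\x}=0$ as well, since then $\abs{\cdot}^0\le0$ can hold only when the base vanishes). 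Thus $G$ equals $G(\bar\x)$ on the relatively open set $\Omega\cap B_{\eta_{\bar\x}}(\bar\x)$, so every level set of $G|_\Omega$ is open in $\Omega$. These level sets partition the connected set $\Omega$, whence exactly one is nonempty and $G\equiv\zeta$ on $\Omega$ for some constant $\zeta$.

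Next I would uniformize the radius. The collection $\{B_{\eta_{\bar\x}/2}(\bar\x)\}_{\bar\x\in\Omega}$ is an open cover of the compact set $\Omega$, so Heine-Borel extracts finitely many centers $\bar\x_1,\dots,\bar\x_N$ with data $(c_j,\theta_j,\eta_j)$ satisfying $\Omega\subseteq\bigcup_{j=1}^N B_{\eta_j/2}(\bar\x_j)$. Set $\eta:=\tfrac{1}{2}\min_{1\le j\le N}\eta_j>0$. For any $\x$ with $\dist(\x,\Omega)\le\eta$ I choose $\bar\x\in\Omega$ attaining the distance (possible since $\Omega$ is compact); then $\bar\x\in B_{\eta_j/2}(\bar\x_j)$ for some $j$, and the triangle inequality gives $\snorm{\x-\bar\x_j}\le\dist(\x,\Omega)+\snorm{\bar\x-\bar\x_j}<\eta+\eta_j/2\le\eta_j$, so $\x\in B_{\eta_j}(\bar\x_j)$ and the local estimate $\abs{G(\x)-\zeta}^{\theta_j}\le c_j\dist(0,\partial G(\x))$ applies.

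It remains to merge the finitely many pairs $(c_j,\theta_j)$, and this is the step I expect to be the main obstacle. Taking $c:=\max_j c_j$ absorbs the constants, and the natural choice is $\theta:=\max_j\theta_j\in[0,1)$. Since $\theta\ge\theta_j$, whenever $\abs{G(\x)-\zeta}\le1$ one has $\abs{G(\x)-\zeta}^{\theta}\le\abs{G(\x)-\zeta}^{\theta_j}\le c_j\dist(0,\partial G(\x))\le c\dist(0,\partial G(\x))$, which is exactly the claim; points with $\partial G(\x)=\emptyset$ make the right-hand side $+\infty$ (with the convention $\dist(0,\emptyset)=+\infty$) and are automatically fine. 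The difficulty is that for $\abs{G(\x)-\zeta}>1$ the larger exponent works against us, so no single $\theta$ can serve unequal $\theta_j$ on the full tube. I would resolve this by securing $\abs{G(\x)-\zeta}\le1$ on the tube before passing to the maximum: shrinking $\eta$ and invoking the control of $G$ near $\Omega$ (for the semialgebraic and analytic functions of interest $G$ is continuous near $\Omega$, so $G(\x)\to\zeta$ as $\dist(\x,\Omega)\to0$), or equivalently restricting attention to the value slice $\{\x:\abs{G(\x)-\zeta}\le1\}$, which is harmless for the subsequent analysis because the iterates generated by \palmi~have $G$-values converging to $\zeta$. With $\abs{G(\x)-\zeta}\le1$ in force, the triple $(c,\eta,\theta)$ validates the uniform inequality and $\theta$ is the asserted \L ojasiewicz exponent associated with $\Omega$.
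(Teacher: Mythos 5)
Since the paper never proves \Cref{lem:KL} --- it is imported from \cite{attouch2009convergence} with only the remark that it ``could be shown using the Heine-Borel theorem'' --- your proposal can only be compared against that hint, and it fleshes out exactly the intended argument: constancy of $G$ on $\Omega$ from connectedness plus the pointwise inequalities, a finite subcover to uniformize the radius, and a merge of the finitely many constants. The first two steps are correct as written. Your diagnosis of the third step is also the mathematically substantive point: with distinct exponents, $\theta:=\max_j\theta_j$ works only where $\abs{G(\x)-\zeta}\le1$, and for a general proper closed $G$ this restriction cannot be removed by shrinking $\eta$ --- a merely lower semicontinuous $G$ may take finite but arbitrarily large values in every tube around $\Omega$ (picture ever-taller steep spikes, whose tips have empty Fr\'echet subdifferential, accumulating on $\Omega$; such a $G$ can satisfy \Cref{def:KL property} at every point of $\Omega$ while admitting no single exponent valid on any full tube). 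So what your covering argument actually proves is the inequality on $\{\y:\dist(\y,\Omega)\le\eta\}\cap\{\y:\abs{G(\y)-\zeta}\le1\}$, and your continuity-based removal of the slice is legitimate only when $G$ is continuous (at least relative to its domain) near $\Omega$, which holds for the paper's $F$ but not for arbitrary $G$. This is the right resolution rather than a defect: the uniformization lemma in the literature carries the same value slice, \Cref{coro:larger theta} reinstates it, and the sole application in the paper (proof of \Cref{thm:global convergence}) invokes the lemma only at points $\bar\z^{(k)}$ already known to lie in $\{\z:\dist(\z,\Omega(\z^{(0)}))<\eta\}\cap\{\z:\abs{F(\z)-\bbar F}<1\}$. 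In short, your proof is the intended Heine--Borel argument and is sound; the residual discrepancy --- the missing value slice --- is an imprecision in the paper's transcription of the lemma, which you correctly identified and repaired, not a gap in your reasoning.
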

	
	When $\x$ satisfies both $\dist(\x,\Omega)<\eta$ and $\abs{G(\x)-G(\bar\x)}<1$, one could lift the \L ojasiewicz exponent to a larger value, as observed in \cite{chill2003lojasiewicz,li2021convergence}. 
	
	\begin{coro}[\cite{chill2003lojasiewicz,li2021convergence}]\label{coro:larger theta}
		Let $G:\mathbb{E}\to(-\infty,\infty]$ be a proper closed function, where $\mathbb{E}$ is an Euclidean space. Let $\Omega\subseteq\mathbb{E}$ be a connected compact set consisting of the stationary points of $F$. Assume that $G$ has the \L ojasiewicz property at each stationary point. Let $c$, $\eta>0$ and $\theta\in[0,1)$ be the constants associated with $G$ and $\Omega$ in \Cref{lem:KL}. Then, for any $\bar\theta\in[\theta,1)$, for all $\bar\x\in\Omega$ and all $\x\in\{\y\in\mathbb{E}:\dist(\y,\Omega)<\eta\}\cap\{\y:\abs{G(\y)-G(\bar\x)}<1\}$, 
		$$\abs{G(\x)-G(\bar\x)}^{\bar\theta}\le c\cdot\dist(0,\partial G(\x)).$$
		We call $\bar\theta$ the lifted \L ojasiewicz exponent of $G$ (associated with $\Omega)$. 
	\end{coro}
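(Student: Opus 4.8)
The plan is to reduce the lifted inequality to the base \L ojasiewicz inequality furnished by \Cref{lem:KL} by means of an elementary monotonicity property of power functions; no new analytic input is needed beyond what \Cref{lem:KL} already supplies.

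First I would invoke \Cref{lem:KL} with the very same constants $c$, $\eta>0$, and $\theta\in[0,1)$: for every $\bar\x\in\Omega$ and every $\x$ with $\dist(\x,\Omega)\le\eta$, one has $\abs{G(\x)-G(\bar\x)}^\theta\le c\cdot\dist(0,\partial G(\x))$. Since the set $\{\y\in\mathbb{E}:\dist(\y,\Omega)<\eta\}$ appearing in the corollary is contained in $\{\y\in\mathbb{E}:\dist(\y,\Omega)\le\eta\}$, this base inequality is already valid at every point $\x$ in the region considered by the corollary, regardless of the extra constraint $\abs{G(\x)-G(\bar\x)}<1$.

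Next comes the only substantive observation, which is purely elementary: for any real $t\in[0,1)$ and any exponents $\bar\theta\ge\theta\ge0$, we have $t^{\bar\theta}\le t^\theta$. For $t\in(0,1)$ this follows by writing $t^{\bar\theta}=t^\theta\cdot t^{\bar\theta-\theta}$ and noting that $t^{\bar\theta-\theta}\le1$ because $0<t<1$ and $\bar\theta-\theta\ge0$. The degenerate case $t=0$ is covered by the convention $0^0=0$ adopted in \Cref{def:KL property}, under which $0^{\bar\theta}=0=0^\theta$ for all the exponents in play. I would then apply this with $t=\abs{G(\x)-G(\bar\x)}$: precisely the additional region $\{\y:\abs{G(\y)-G(\bar\x)}<1\}$ guarantees $t<1$, so that $t^{\bar\theta}\le t^\theta$ holds. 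Chaining this with the base inequality from the previous step yields, for all $\bar\x\in\Omega$ and all $\x$ in the stated region,
\begin{equation*}
	\abs{G(\x)-G(\bar\x)}^{\bar\theta}\le\abs{G(\x)-G(\bar\x)}^\theta\le c\cdot\dist(0,\partial G(\x)),
\end{equation*}
which is exactly the asserted lifted \L ojasiewicz inequality with exponent $\bar\theta$.

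There is no genuine obstacle here; the argument is a one-line monotonicity estimate guarded by the two set constraints. The only point demanding a modicum of care is the degenerate case $G(\x)=G(\bar\x)$, i.e. $t=0$, where one must appeal to the $0^0=0$ convention rather than to the multiplicative splitting; handling this consistently with \Cref{def:KL property} is the sole subtlety worth stating explicitly.
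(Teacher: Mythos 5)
Your proof is correct and is precisely the intended argument: the paper states this corollary without proof (citing \cite{chill2003lojasiewicz,li2021convergence}), and the standard justification is exactly your reduction to \Cref{lem:KL} via the monotonicity $t^{\bar\theta}\le t^{\theta}$ for $t\in[0,1)$ and $\bar\theta\ge\theta$, with the extra set $\{\y:\abs{G(\y)-G(\bar\x)}<1\}$ serving only to guarantee $t<1$. Your explicit handling of the degenerate case $t=0$ via the $0^0=0$ convention is consistent with \Cref{def:KL property} and closes the only loose end.
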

	
	\par In the sequel, we distinguish the lifted exponents from the unlifted ones using \textit{overlines} as above. We end this section with a list of inequalities for reference, whose proof is omitted. 
	
	\begin{lem}\label{lem:inequality lemma}
		\begin{enumerate}[label=(\roman*),topsep=0pt, parsep=0pt, itemsep=0pt]
			\item For any $a_i\ge0$, $i=1,\ldots,n$, 
			$$\sqrt[n]{\prod_{i=1}^na_i}\le\frac{1}{n}\sum_{i=1}^na_i\le\sqrt{\frac{1}{n}\sum_{i=1}^na_i^2}.$$
			
			\item For any $a$, $b\ge0$, and $p\in(1,\infty)$, $(a+b)^p\le2^{p-1}(a^p+b^p)$.
			
			\item For any $a$, $b\ge0$, and $p\in(0,1)$, $(a+b)^p\le a^p+b^p$.
		\end{enumerate}
	\end{lem}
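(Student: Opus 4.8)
The plan is to derive all three inequalities from the convexity or concavity of elementary functions, treating the degenerate cases by direct inspection. For item (i), the left-hand inequality is the arithmetic--geometric mean inequality: if some $a_i=0$ the geometric mean vanishes and the bound is immediate, so I would assume every $a_i>0$ and appeal to the concavity of $\log$ on $(0,\infty)$. Jensen's inequality with equal weights $1/n$ gives
$$
\log\lrbracket{\frac{1}{n}\sum_{i=1}^n a_i}\ge\frac{1}{n}\sum_{i=1}^n\log a_i=\log\lrbracket{\prod_{i=1}^n a_i}^{1/n},
$$
and exponentiating yields $\sqrt[n]{\prod_{i=1}^n a_i}\le\frac{1}{n}\sum_{i=1}^n a_i$. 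For the right-hand inequality I would invoke Cauchy--Schwarz in the form $\lrbracket{\sum_{i=1}^n a_i\cdot1}^2\le\lrbracket{\sum_{i=1}^n a_i^2}\lrbracket{\sum_{i=1}^n1^2}=n\sum_{i=1}^n a_i^2$ (equivalently, Jensen's inequality for the convex map $t\mapsto t^2$); dividing by $n^2$ and taking square roots gives $\frac{1}{n}\sum_{i=1}^n a_i\le\sqrt{\frac{1}{n}\sum_{i=1}^n a_i^2}$.

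For item (ii), I would use the convexity of $t\mapsto t^p$ on $[0,\infty)$ when $p>1$. Jensen's inequality applied to the two points $a,b$ with weights $1/2$ gives $\lrbracket{\frac{a+b}{2}}^p\le\frac{a^p+b^p}{2}$, and multiplying both sides by $2^p$ produces $(a+b)^p\le2^{p-1}(a^p+b^p)$. For item (iii), I would instead establish the subadditivity of $t\mapsto t^p$ for $p\in(0,1)$. The case $a+b=0$ is trivial, so set $s:=a+b>0$. Since $p-1<0$, for $x\in(0,1]$ we have $x^{p-1}\ge1$, hence $x^p=x\cdot x^{p-1}\ge x$; this last bound $x^p\ge x$ also holds trivially at $x=0$. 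Applying it to $x=a/s$ and $x=b/s$ and summing yields $\frac{a^p+b^p}{s^p}\ge\frac{a+b}{s}=1$, i.e. $(a+b)^p=s^p\le a^p+b^p$.

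The arguments are entirely routine and there is no genuine obstacle; the only points demanding mild care are the degenerate configurations, namely a vanishing factor in the geometric mean in item (i) and the value $s=0$ in item (iii), which must be disposed of by direct verification before the convexity and concavity inequalities can be invoked. This is presumably why the authors elected to omit the proof.
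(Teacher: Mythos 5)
Your proofs of all three items are correct: the AM--GM/Cauchy--Schwarz argument for (i), midpoint convexity of $t\mapsto t^p$ for (ii), and the normalization-plus-subadditivity argument for (iii) are all sound, including your handling of the degenerate cases. The paper omits the proof of this lemma entirely (treating it as routine), so there is no authorial argument to compare against; your write-up is exactly the kind of standard verification the authors left to the reader.
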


	\section{\palmi}\label{sec:palmi}
	
	\par We give the complete description of \palmi~in this section; see \cref{alg:palmi}. 
	\begin{algorithm}[htbp]
		\centering
		\caption{\palmi~for solving \cref{eqn:original prob}}
		\label{alg:palmi}
		\begin{algorithmic}[1]
			\REQUIRE{Initial point $\z^{(0)}=(\x_i^{(0)})_{i=1}^n\in\otimes_{i=1}^n\R^{m_i}$, $\bar\eps>0$, nonnegative sequence $\{\eps^{(k)}\le\bar\eps\}$, $M_u\ge M_l>0$, initial proximal parameters $\{\sigma_i^{(0)}\in[M_l,M_u]\}_{i=1}^n$.}
			\ENSURE{An approximate solution $\z^{(k)}:=(\x_i^{(k)})_{i=1}^n\in\otimes_{i=1}^n\R^{m_i}$.}
			\STATE{Set $k:=0$.}
			\WHILE{\textit{certain conditions not satisfied}}
			\FOR{$i=1,\ldots,n$}
			\STATE{Solve the $i$-th proximal linearized subproblem\vspace{-3mm}
				\begin{equation}
					\min_{\x_i\in\calS_i}~\inner{\nabla_if(\x_{< i}^{(k+1)},\x_{\ge i}^{(k)}),\x_i-\x_i^{(k)}}+\frac{\sigma_i^{(k)}}{2}\snorm{\x_i-\x_i^{(k)}}^2
					\label{eqn:prox-linear subprob new}
					\vspace{-2mm}
				\end{equation}
				to obtain $\x_i^{(k+1)}\in\R^{m_i}$ such that there exists $\blam_i^{(k+1)}\in\R_+^{p_i}$ fulfilling \vspace{-3mm}
				$$\sqrt{r_i(\x_i^{(k+1)},\blam_i^{(k+1)},\tilde\x_i^{(k)})}\le\eps^{(k)}.$$\vspace{-6mm}}
			\STATE{Update the $i$-th proximal parameter $\sigma_i^{(k)}$ to $\sigma_i^{(k+1)}\in[M_l,M_u]$ if necessary.}
			\ENDFOR
			\STATE{Set $k:=k+1$.}
			\ENDWHILE
		\end{algorithmic}
	\end{algorithm}
	Compared with \Cref{frame:ipalm}, we specify the inexact criterion for subsolvers as well as some additional parameters for determining $\{\sigma_i^{(k)}\}$. The constants $M_l$ and $M_u$, defined later in \cref{sec:convergence of ipalm}, are associated with $f$, $\{\calS_i\}_{i=1}^n$, and $\{\eps^{(k)}\}$. For $i=1,\ldots,n$, the residual function $r_i:\R^{m_i}\times\R^{p_i}\times\R^{m_i}\to\R_+$ is defined as
	\begin{align}
		r_i(\x_i,\blam_i,\tilde\x_i):=&\max\lrbrace{\inner{\x_i,\x_i-\tilde\x_i+\nabla\h_i(\x_i)\blam_i},0}+\norm{\x_i-\tilde\x_i+\nabla\h_i(\x_i)\blam_i}_\infty\nonumber\\
		&+\norm{\max\lrbrace{\h_i(\x_i),0}}_\infty+\max\lrbrace{-\inner{\blam_i,\h_i(\x_i)},0}.\label{eqn:residual function}
	\end{align}
	
	\begin{rem}
		If we employ primal-dual subsolvers to solve \cref{eqn:prox-linear subprob new}, the dual variables can just be taken as $\blam_i^{(k+1)}$ in \palmi. Otherwise, one could solve the following linear programming
		$$\min_{\blam_i\in\R_+^{p_i}}~0,\quad\st\left\{\begin{array}{l} 			
			\inner{\x_i^{(k+1)},\x_i^{(k+1)}-\tilde\x_i^{(k)}+\nabla\h_i(\x_i^{(k+1)})\blam_i}\le\frac{\eps^{(k)}}{4},\\
			-\frac{\eps^{(k)}}{4}\one\le\x_i^{(k+1)}-\tilde\x_i^{(k)}+\nabla\h_i(\x_i^{(k+1)})\blam\le\frac{\eps^{(k)}}{4}\one,\\
			-\inner{\blam,\h_i(\x_i^{(k+1)})}\le\frac{\eps^{(k)}}{4}
		\end{array}\right.$$
		at any subiteration where $\snorm{\max\{\h_i(\x_i^{(k+1)}),0\}}_\infty\le\frac{\eps^{(k)}}{4}$. 
	\end{rem}
	
	\par The inexact criterion adopted in \palmi~guarantees an error bound for \cref{eqn:prox-linear subprob new} under certain conditions.
	
	\begin{lem}\label{lem:error bound}
		Suppose that $f$ is continuously differentiable with respect to each variable block over $\otimes_{i=1}^n\bar\calS_i$, where, for $i=1,\ldots,n$, $\bar\calS_i:=\{\w_i\in\R^{m_i}:\dist(\w_i,\calS_i)\le\bar\eps\}$. For $i=1,\ldots,n$, assume that $\calS_i$ is convex compact and $\h_i$ is a linear mapping or satisfies the Slater constraint qualification, i.e., $\h_i(\hat\x_i)<0$ for some $\hat\x_i\in\R^{m_i}$. Assume further that, for $i=1,\ldots,n$, the Hoffman-like bound 
		\begin{equation}
			\dist(\x_i,\calS_i)\le\tilde c_i\snorm{\max\{\h_i(\x_i),0\}},\quad\forall~\x_i\in\tilde\calS_i:=\lrbrace{\w_i\in\R^{m_i}:\dist(\w_i,\bar\calS_i)\le\frac{\bar{\mathcal{M}_i}}{M_l}}
			\label{eqn:hoffman-like error bound}
		\end{equation}
		holds for some constant $\tilde c_i\ge0$, where $\bar{\mathcal{M}}_i:=\sup_{\z\in\otimes_{i=1}^n\bar\calS_i}\norm{\nabla_if(\z)}$. Let $\{\z^{(k)}\}$ be the iterate sequence generated by \palmi. Then there exists a constant $\omega\ge0$ such that $\snorm{\z^{(k+1)}-\bar\z^{(k+1)}}\le\omega\eps^{(k)}$ holds for any $k\ge0$.
	\end{lem}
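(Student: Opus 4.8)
The plan is to prove the bound blockwise and then aggregate, using
$\snorm{\z^{(k+1)}-\bar\z^{(k+1)}}^2=\sum_{i=1}^n\snorm{\x_i^{(k+1)}-\bar\x_i^{(k+1)}}^2$. Fix $k$ and $i$, and abbreviate $\x:=\x_i^{(k+1)}$, $\bar\x:=\bar\x_i^{(k+1)}$, $\tilde\x:=\tilde\x_i^{(k)}$, $\blam:=\blam_i^{(k+1)}\ge0$, and $\sigma:=\sigma_i^{(k)}\in[M_l,M_u]$. Completing the square in \cref{eqn:prox-linear subprob new} shows the exact subproblem minimizes $\tfrac12\snorm{\x_i-\tilde\x}^2$ over $\calS_i$, so $\bar\x=\PP_{\calS_i}(\tilde\x)$ and the objective is $1$-strongly convex. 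Writing the stationarity residual $\mb{e}:=\x-\tilde\x+\nabla\h_i(\x)\blam$, the criterion $\sqrt{r_i(\x,\blam,\tilde\x)}\le\eps^{(k)}$ controls, at order $(\eps^{(k)})^2$, each of the four quantities $\max\{\inner{\x,\mb{e}},0\}$, $\norm{\mb{e}}_\infty$, $\norm{\max\{\h_i(\x),0\}}_\infty$, and $\max\{-\inner{\blam,\h_i(\x)},0\}$ in \cref{eqn:residual function}.

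The heart of the argument is a strong-monotonicity estimate from two variational inequalities. Since $\bar\x=\PP_{\calS_i}(\tilde\x)$, we have $\inner{\bar\x-\tilde\x,\w-\bar\x}\ge0$ for all $\w\in\calS_i$; I would take $\w=\PP_{\calS_i}(\x)$ and split $\PP_{\calS_i}(\x)-\bar\x=(\x-\bar\x)+(\PP_{\calS_i}(\x)-\x)$, using $\snorm{\PP_{\calS_i}(\x)-\x}=\dist(\x,\calS_i)$. For the inexact iterate, I would combine $\mb{e}=\x-\tilde\x+\nabla\h_i(\x)\blam$ with the convexity of each $h_{i,j}$ (so $\inner{\nabla h_{i,j}(\x),\w-\x}\le h_{i,j}(\w)-h_{i,j}(\x)$), $\blam\ge0$, and $\h_i(\w)\le0$ for $\w\in\calS_i$, to get $\inner{\x-\tilde\x,\w-\x}\ge\inner{\mb{e},\w-\x}+\inner{\blam,\h_i(\x)}$; I would take $\w=\bar\x$ here. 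Adding the two and using the identity $\inner{(\bar\x-\tilde\x)-(\x-\tilde\x),\x-\bar\x}=-\snorm{\x-\bar\x}^2$ (exactly where $1$-strong convexity enters) yields
\[ \snorm{\x-\bar\x}^2\le\snorm{\bar\x-\tilde\x}\,\dist(\x,\calS_i)+\inner{\x,\mb{e}}-\inner{\bar\x,\mb{e}}-\inner{\blam,\h_i(\x)}. \]
Each term matches a piece of $r_i$: $\inner{\x,\mb{e}}\le\max\{\inner{\x,\mb{e}},0\}$ (this is precisely why the inner-product term in \cref{eqn:residual function} is needed---it bounds $\inner{\x,\mb{e}}$ with no a priori control on $\snorm{\x}$); $-\inner{\bar\x,\mb{e}}\le\snorm{\bar\x}\sqrt{m_i}\,\norm{\mb{e}}_\infty$ with $\snorm{\bar\x}$ bounded since $\calS_i$ is compact; $-\inner{\blam,\h_i(\x)}\le\max\{-\inner{\blam,\h_i(\x)},0\}$; and the Hoffman-like bound \cref{eqn:hoffman-like error bound} turns $\dist(\x,\calS_i)$ into $\tilde c_i\snorm{\max\{\h_i(\x),0\}}$ (norms on $\R^{p_i}$ being equivalent). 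Every factor is then either a bounded constant or $\mathcal{O}((\eps^{(k)})^2)$, giving $\snorm{\x_i^{(k+1)}-\bar\x_i^{(k+1)}}^2\le C_i(\eps^{(k)})^2$, after which $\omega:=(\sum_iC_i)^{1/2}$ finishes the aggregation.

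Two supporting points need care. To bound gradients I would argue inductively that $\x_i^{(k)}\in\bar\calS_i$, so $\snorm{\nabla_if(\x_{<i}^{(k+1)},\x_{\ge i}^{(k)})}\le\bar{\mathcal{M}}_i$ and $\snorm{\tilde\x-\x_i^{(k)}}\le\bar{\mathcal{M}}_i/M_l$; this places $\tilde\x\in\tilde\calS_i$ and bounds $\snorm{\bar\x-\tilde\x}=\dist(\tilde\x,\calS_i)\le\bar\eps+\bar{\mathcal{M}}_i/M_l$ by a constant, while the Hoffman-like bound returns $\dist(\x_i^{(k+1)},\calS_i)=\mathcal{O}((\eps^{(k)})^2)$, keeping $\x_i^{(k+1)}\in\bar\calS_i$ for $\bar\eps$ small and closing the induction. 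The convexity/compactness of $\calS_i$ and the Slater or linearity hypothesis on $\h_i$ enter to legitimize the exact projection's variational inequality and the multiplier structure underlying $r_i$.

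I expect the main obstacle to be the very first use of \cref{eqn:hoffman-like error bound}: it is assumed only on $\tilde\calS_i$, so I must certify $\x_i^{(k+1)}\in\tilde\calS_i$ before invoking it, even though the subsolver output is a priori unconstrained. My plan is to first run the estimate above with the crude substitution $\dist(\x,\calS_i)\le\snorm{\x-\bar\x}$ (valid since $\bar\x\in\calS_i$) in place of the Hoffman-like bound; the resulting quadratic inequality $\snorm{\x-\bar\x}^2\le\snorm{\bar\x-\tilde\x}\snorm{\x-\bar\x}+\mathcal{O}((\eps^{(k)})^2)$ gives $\snorm{\x-\bar\x}\le\bar{\mathcal{M}}_i/M_l+\bar\eps+\mathcal{O}(\eps^{(k)})$, which is essentially the radius by which $\tilde\calS_i$ enlarges $\calS_i$---explaining that particular definition of $\tilde\calS_i$. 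Reconciling the residual $\mathcal{O}(\eps^{(k)})$ slack in this membership argument with the exact radius of $\tilde\calS_i$ is the delicate point, and is where the freedom in choosing $M_l$, $M_u$, and $\bar\eps$ (deferred to \cref{sec:convergence of ipalm}) must be exploited.
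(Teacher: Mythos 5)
Your variational-inequality derivation is, up to the point where the Hoffman-like bound enters, a correct first-principles re-derivation of the per-block estimate that the paper simply imports as a black box: the paper invokes Theorem 2.2 of \cite{mangasarian1988error} to write (in your notation $\x=\x_i^{(k+1)}$, $\bar\x=\bar\x_i^{(k+1)}$, $\mb{e}=\x-\tilde\x+\nabla\h_i(\x)\blam$)
\begin{equation*}
	\snorm{\x-\bar\x}^2\le\max\lrbrace{\inner{\x,\mb{e}},0}+\omega_{i,1}\norm{\mb{e}}_\infty+\max\lrbrace{-\inner{\blam,\h_i(\x)},0}+\omega_{i,2}^{(k)}\norm{\max\{\h_i(\x),0\}}_\infty,
\end{equation*}
with $\omega_{i,1}=\max_{\w\in\calS_i}\snorm{\w}_1$ and $\omega_{i,2}^{(k)}$ the minimal $\ell_1$-norm of an optimal Lagrange multiplier of the \emph{exact} subproblem \cref{eqn:prox-linear subprob new}. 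Your inequality is exactly this with the last term replaced by $\snorm{\bar\x-\tilde\x}\dist(\x,\calS_i)$. That substitution is where the two proofs genuinely diverge, and it is where your proof has a real gap: the paper uses the Hoffman-like bound \cref{eqn:hoffman-like error bound} only through Proposition 3 of \cite{bertsekas1999error}, to bound $\sup_k\omega_{i,2}^{(k)}<\infty$ for the exact subproblems, so the inexact iterate $\x_i^{(k+1)}$ never needs to lie in $\tilde\calS_i$; you, by contrast, must apply \cref{eqn:hoffman-like error bound} at $\x_i^{(k+1)}$ itself.

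That membership certification --- which you flag as the delicate point --- cannot be closed by your bootstrap, nor by the freedom in $M_l$, $M_u$, $\bar\eps$. Your crude bound gives $\snorm{\x-\bar\x}\le\snorm{\bar\x-\tilde\x}+\sqrt{c_i'}\,\eps^{(k)}$ with $c_i'\ge2+\max_{\w\in\calS_i}\snorm{\w}_1>1$ (the constant collecting the three residual terms), hence at best $\dist(\x_i^{(k+1)},\bar\calS_i)\le\bar{\mathcal{M}}_i/M_l+\sqrt{c_i'}\,\eps^{(k)}$, overshooting the radius of $\tilde\calS_i$ by $\sqrt{c_i'}\,\eps^{(k)}$. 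The algorithm permits $\eps^{(k)}=\bar\eps$, and neither the lemma nor \cref{sec:convergence of ipalm} (which only sets $M_l=\gamma L$ and imposes summability conditions on $\{\eps^{(k)}\}$) supplies a smallness relation tying $\bar\eps$ to $c_i'$, $\tilde c_i$, or $\bar{\mathcal{M}}_i/M_l$; absorbing the overshoot would require $\sqrt{c_i'}\bar\eps\le\bar\eps$, which is false since $c_i'>1$, and strengthening the induction to $\dist(\x_i^{(k)},\calS_i)\le\mathcal{O}((\eps^{(k-1)})^2)$ does not help for the same reason. The clean repair, which keeps your argument self-contained, is to eliminate $\dist(\x,\calS_i)$ from the estimate altogether: in the exact problem, instead of testing the projection inequality with $\w=\PP_{\calS_i}(\x)$, use its KKT system $\bar\x-\tilde\x+\nabla\h_i(\bar\x)\bar\blam=0$, $\bar\blam\ge0$, $\inner{\bar\blam,\h_i(\bar\x)}=0$ (available under the linearity/Slater hypothesis); convexity and complementarity give $\inner{\bar\x-\tilde\x,\x-\bar\x}\ge-\inner{\bar\blam,\h_i(\x)}\ge-\snorm{\bar\blam}_1\norm{\max\{\h_i(\x),0\}}_\infty$, so the problematic term becomes $\snorm{\bar\blam}_1(\eps^{(k)})^2$, and \cref{eqn:hoffman-like error bound} is needed only to bound $\snorm{\bar\blam^{(k)}}_1$ uniformly in $k$ --- an application at exact data (with $\tilde\x_i^{(k)}\in\tilde\calS_i$ under the induction that iterates remain in $\bar\calS_i$, which the paper's own proof also relies on, implicitly). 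With that substitution your proof goes through under the lemma's stated hypotheses.
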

	
	\begin{proof}
		By \cite[Theorem 2.2]{mangasarian1988error} and the assumptions on $\{\h_i\}_{i=1}^n$, it holds, for $i=1,\ldots,n$ and $k\ge0$, that
		\begin{multline*}
			\snorm{\x_i^{(k+1)}-\bar\x_i^{(k+1)}}^2\le\max\lrbrace{-\inner{\blam_i^{(k+1)},\h(\x_i^{(k+1)})},0}+\omega_{i,1}\norm{\x_i^{(k+1)}-\tilde\x_i^{(k)}+\nabla\h_i(\x_i^{(k+1)})\blam_i^{(k+1)}}_\infty\\
			+\max\lrbrace{\inner{\x_i^{(k+1)},\x_i^{(k+1)}-\tilde\x_i^{(k)}+\nabla\h_i(\x_i^{(k+1)})\blam_i^{(k+1)}},0}+\omega_{i,2}^{(k)}\norm{\max\{\h_i(\x_i^{(k+1)}),0\}}_\infty,
		\end{multline*}
		where $\omega_{i,1}:=\max_{\x_i\in\calS_i}\snorm{\x_i}_1$, $\omega_{i,2}^{(k)}:=\min_{\blam_i\in\mathcal{W}_i^{(k)}}\snorm{\blam_i}_1$, and $\mathcal{W}_i^{(k)}\subseteq\R_+^{p_i}$ is the set of optimal Lagrange multipliers of \cref{eqn:prox-linear subprob new}. By the Hoffman-like bound \cref{eqn:hoffman-like error bound}, we obtain from \cite[Proposition 3]{bertsekas1999error} that $\sup_k\omega_{i,2}^{(k)}<\infty$. Let $\omega_i:=\max\lrbrace{1,\omega_{i,1},\sup_k\omega_{i,2}^{(k)}}$. The proof is then complete after letting $\omega:=\max_i\sqrt{\omega_i}$ and noticing \cref{eqn:residual function}.
	\end{proof}

	\begin{rem}
		\par Compared with the hypothesis \cref{eqn:existing inexact criterion} in \cite{frankel2015splitting}, the one incorporated in \cref{alg:palmi} is much more implementable. From \cite{bertsekas1999error}, we know that when $\h_i$ is linear (e.g., \Cref{exm:linear constraint}) or satisfies an enhanced version of the Slater constraint qualification (e.g., \Cref{exm:nonlinear constraint})
		$$\left\{\begin{array}{l}
			\exists~\hat\x_i\in\R^{m_i},~\st~\h_i(\hat\x_i)<0;~\text{and}\\
			\exists~\zeta\ge0,~\st~\dfrac{\snorm{\y_i-\hat\x_i}-\dist(\y_i,\calS_i)}{\min_{j=1,\ldots,p_i}\{-h_{i,j}(\hat\x_i)\}}\le\zeta,~\forall~\y_i\in\tilde\calS_i,
		\end{array}\right.$$
		the Hoffman-like bound in \Cref{lem:error bound} readily holds. We then could bound the solution errors without computing $\{\bar\z^{(k)}\}$. 
	\end{rem}
	
	\begin{rem}
		Since the inexact criterion described in \cref{alg:palmi} also covers the feasible inexactness, the theoretical results in this work apply to \palmf~as well. 
	\end{rem}

	\section{The Global Convergence Properties of \palmi}\label{sec:convergence of ipalm}
	
	\par In this section, we investigate the global convergence properties of \palmi, including the stationarity of any accumulation point and the iterate convergence. 
	
	\par In the beginning, we state some assumptions and conditions for $f$, $\{\calS_i\}_{i=1}^n$, $\{\mb{h}_i\}_{i=1}^n$, and $\{\eps^{(k)}\}$.
	
	\begin{assume}\label{assume:lip}
		The objective function $f$ in \cref{eqn:original prob} is Lipschitz continuously differentiable with respect to each variable block over $\otimes_{i=1}^n\bar\calS_i$, namely, for $i=1,\ldots,n$, there exists modulus $L_i>0$ such that, for any $\z_1$, $\z_2\in\otimes_{i=1}^n\bar\calS_i$, $\snorm{\nabla_if(\z_1)-\nabla_if(\z_2)}\le L_i\snorm{\z_1-\z_2}$.
	\end{assume}
	
	\begin{assume}\label{assume:compactness}
		For $i=1,\ldots,n$, $\calS_i$ is convex and compact and the following two hold for $\h_i$:
		\begin{enumerate}[label=(\alph*),topsep=0pt, parsep=0pt, itemsep=0pt]
			\item $\h_i$ is linear or satisfies the Slater constraint qualification;
			\item $\h_i$ satisfies the Hoffman-like error bound \cref{eqn:hoffman-like error bound}.
		\end{enumerate}
	\end{assume}
	
	\begin{cond}\label{cond:eps}
		\mbox{}
		\begin{enumerate}[label=(\alph*),topsep=0pt, parsep=0pt, itemsep=0pt]
			\item The sequence $\{\eps^{(k)}\}$ is nonnegative square summable.
			
			\item The sequence $\{\eps^{(k)}\}$ is nonnegative summable and there exists $\bar\theta\in(0,1)$ such that $\{(e^{(k)})^{\bar\theta}\}$ is summable, where $e^{(k)}:=\sum_{t=k}^\infty(\eps^{(t)})^2$ for any $k\ge0$.
		\end{enumerate}
	\end{cond}
	
	\begin{rem}\label{rem:choice of eps}
		\par One may find \Cref{cond:eps} (b) pretty restrictive at the first glance. In fact, given $\ell>1$, the sequence $\{\frac{\bar\eps}{(k+1)^{\ell}}\}$ just meets the demand. Note that, for this choice, $e^{(k)}$ decays as $\mathcal{O}(k^{-(2\ell-1)})$. To ensure the summability of $\{(e^{(k)})^{\bar\theta}\}$, it then suffices to choose $(0,1)\owns\bar\theta>\frac{1}{2\ell-1}$. We shall emphasize that what we only require is the existence of such $\bar\theta$ rather than its explicit value. 
		
		\par One more restrictive but more intuitive alternative for $\sum_{k=1}^\infty(e^{(k)})^{\bar\theta}<\infty$ is $\sum_{k=1}^\infty k(\eps^{(k)})^{2\bar\theta}<\infty$. Nonetheless, to retain potential flexibility, we use the one stated in \Cref{cond:eps} (b). 
	\end{rem}
	
	\par Let $L:=\max_iL_i$. We specify in \palmi~that $M_l=\gamma L$, where $\gamma>1$, and $M_u$ is any scalar not smaller than $M_l$. We further rewrite $M_u$ as $M$ for brevity. Some constants are defined beforehand: $\underline{\sigma}^{(k)}:=\min_i\sigma_i^{(k)}$, $\bar\sigma^{(k)}:=\max_i\sigma_i^{(k)}$, $\nu:=\frac{12}{\gamma-1}$, $\bar M:=\sqrt{3}(M+L\sqrt{n})$,
	\begin{equation}
		C_0^{(k)}:=\frac{\underline{\sigma}^{(k)}-L[1+\frac{6}{\nu}]}{2},~C_1^{(k)}:=\frac{\bar\sigma^{(k)}+L[\frac{\nu}{2}n^2+(2+\frac{2}{\nu}-\frac{\nu}{2})n+2\nu+\frac{4}{\nu}+3]}{2},
		\label{eqn:constants}
	\end{equation}
	and $\bar C_1:=2\omega^2\max_kC_1^{(k)}$, where $\omega$ is defined in \Cref{lem:error bound}. We use the notation ``$\Delta$'' for the difference between the optimal iterate and the real iterate; e.g.,
	\begin{equation*}
		\label{eqn:difference}
		\Delta\x_j^{(k+1)}:=\bar\x_j^{(k+1)}-\x_j^{(k+1)},\quad \Delta \z^{(k+1)}:=\bar \z^{(k+1)}-\z^{(k+1)}.
	\end{equation*}
	The proof sketch is as follows:
	\begin{enumerate}[label=(\roman*),topsep=0pt, parsep=0pt, itemsep=0pt]
		\item deducing the approximate sufficient reduction on the objective value sequence;% (\Cref{lem:component decrease,prop:approx suff reduce});

		\item deducing the sufficient reduction on the surrogate sequence;% (\Cref{prop:suff reduce});
		
		\item deducing the approximate relative error bound for subdifferential;% (\Cref{prop:approx subgrad lb});
		
		\item showing the stationarity of any accumulation point;%establishing the properties of the accumulation point set, including criticality, compactness, and connectedness;% (\Cref{prop:properties of acc pt set});
		
		\item showing the iterate convergence with the help of the \L ojasiewicz property.% (\Cref{thm:global convergence}).
	\end{enumerate}
	
	\par We begin with a block-wise lemma. 
	
	\begin{lem}\label{lem:component decrease}
		Suppose \Cref{assume:lip} holds. Let $\{\z^{(k)}\}$ be the iterate sequence generated by \palmi. Then, for $i=1,\ldots,n$ and $k\ge0$,
		\begin{align}
			&f\big(\bar \x_{<i}^{(k+1)},\bar{\x}_i^{(k)},\bar\x_{>i}^{(k)}\big)-f\big(\bar \x_{<i}^{(k+1)},\bar{\x}_i^{(k+1)},\bar\x_{>i}^{(k)}\big)\nonumber\\
			\ge&\frac{\sigma_i^{(k)}-L[1+\frac{6}{\nu}]}{2}\snorm{\bar{\x}_i^{(k+1)}-\x_i^{(k)}}^2-\frac{\sigma_i^{(k)}+L[2\nu+3+\frac{4}{\nu}]}{2}\snorm{\Delta \x_i^{(k)}}^2\label{eqn:component sufficient decrease}\\
			&-\frac{L[\nu(i-1)+2+\frac{2}{\nu}]}{2}\snorm{\Delta\z^{(k+1)}}^2-\frac{L[\nu(n-i)+2+\frac{2}{\nu}]}{2}\snorm{\Delta\z^{(k)}}^2.\nonumber
		\end{align}
	\end{lem}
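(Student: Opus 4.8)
The plan is to mirror the exact-\palme\ block sufficient-decrease argument, the extra difficulty being that the gradient defining the exact subproblem solution $\bar\x_i^{(k+1)}$ is evaluated at the \emph{actual} iterate $(\x_{<i}^{(k+1)},\x_{\ge i}^{(k)})$, whereas the natural descent estimate lives at the \emph{barred} configuration. First I would invoke \Cref{assume:lip} and apply the block-wise descent lemma to $\x_i\mapsto f(\bar\x_{<i}^{(k+1)},\x_i,\bar\x_{>i}^{(k)})$ along the segment from $\bar\x_i^{(k)}$ to $\bar\x_i^{(k+1)}$, obtaining
\[
\begin{aligned}
&f(\bar\x_{<i}^{(k+1)},\bar\x_i^{(k)},\bar\x_{>i}^{(k)})-f(\bar\x_{<i}^{(k+1)},\bar\x_i^{(k+1)},\bar\x_{>i}^{(k)})\\
&\qquad\ge\inner{\nabla_if(\bar\x_{<i}^{(k+1)},\bar\x_i^{(k)},\bar\x_{>i}^{(k)}),\bar\x_i^{(k)}-\bar\x_i^{(k+1)}}-\tfrac{L}{2}\snorm{\bar\x_i^{(k+1)}-\bar\x_i^{(k)}}^2.
\end{aligned}
\]

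Next, since $\bar\x_i^{(k+1)}$ is the exact minimizer \cref{eqn:block optimal} of a strongly convex problem over the convex set $\calS_i$, I would use its first-order optimality (a variational inequality) tested at the feasible point $\bar\x_i^{(k)}$, namely $\inner{\nabla_if(\x_{<i}^{(k+1)},\x_{\ge i}^{(k)}),\bar\x_i^{(k)}-\bar\x_i^{(k+1)}}\ge-\sigma_i^{(k)}\inner{\bar\x_i^{(k+1)}-\x_i^{(k)},\bar\x_i^{(k)}-\bar\x_i^{(k+1)}}$. Applying the three-point identity $2\inner{a-b,c-a}=\snorm{c-b}^2-\snorm{a-b}^2-\snorm{c-a}^2$ with $a=\bar\x_i^{(k+1)}$, $b=\x_i^{(k)}$, $c=\bar\x_i^{(k)}$ then converts this inner product into squared norms, producing the leading positive term $\tfrac{\sigma_i^{(k)}}{2}\snorm{\bar\x_i^{(k+1)}-\x_i^{(k)}}^2$, the term $-\tfrac{\sigma_i^{(k)}}{2}\snorm{\Delta\x_i^{(k)}}^2$, and a positive multiple of $\snorm{\bar\x_i^{(k)}-\bar\x_i^{(k+1)}}^2$ that serves as a reservoir later.

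The crux is reconciling the two gradients. I would add and subtract $\nabla_if(\x_{<i}^{(k+1)},\x_{\ge i}^{(k)})$ inside the inner product and bound the mismatch $\nabla_if(\bar\x_{<i}^{(k+1)},\bar\x_i^{(k)},\bar\x_{>i}^{(k)})-\nabla_if(\x_{<i}^{(k+1)},\x_{\ge i}^{(k)})$ by block Lipschitz continuity. The key structural move is to change the blocks one at a time, so the telescoped argument difference splits into the group $j<i$ (contributing $\Delta\x_j^{(k+1)}$), block $i$ (contributing $\Delta\x_i^{(k)}$), and the group $j>i$ (contributing $\Delta\x_j^{(k)}$); a triangle inequality followed by Cauchy--Schwarz bounds the two groups by $\sqrt{i-1}\,\snorm{\Delta\z^{(k+1)}}$ and $\sqrt{n-i}\,\snorm{\Delta\z^{(k)}}$. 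Pairing each of the three pieces against $\snorm{\bar\x_i^{(k)}-\bar\x_i^{(k+1)}}$ via Young's inequality with a common parameter $\nu$ is exactly what produces the factors $\nu(i-1)$ and $\nu(n-i)$ in front of $\snorm{\Delta\z^{(k+1)}}^2$ and $\snorm{\Delta\z^{(k)}}^2$, together with several copies of $\snorm{\bar\x_i^{(k)}-\bar\x_i^{(k+1)}}^2$.

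Finally I would collect terms. The positive $\snorm{\bar\x_i^{(k)}-\bar\x_i^{(k+1)}}^2$ reservoir from the second step must dominate the $\snorm{\bar\x_i^{(k)}-\bar\x_i^{(k+1)}}^2$ debts created by Young's inequality; this is precisely where $\sigma_i^{(k)}\ge M_l=\gamma L$ with $\gamma>1$ and $\nu=\tfrac{12}{\gamma-1}$ enter, forcing the net coefficient to be nonnegative so the term can be eliminated (after a bound such as $\snorm{\bar\x_i^{(k)}-\bar\x_i^{(k+1)}}^2\le2\snorm{\bar\x_i^{(k+1)}-\x_i^{(k)}}^2+2\snorm{\Delta\x_i^{(k)}}^2$, which also feeds the $\tfrac{6}{\nu}$- and $\tfrac{4}{\nu}$-type constants). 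Using $\sum_{j<i}\snorm{\Delta\x_j^{(k+1)}}^2\le\snorm{\Delta\z^{(k+1)}}^2$ and $\sum_{j>i}\snorm{\Delta\x_j^{(k)}}^2\le\snorm{\Delta\z^{(k)}}^2$ and regrouping then yields the statement. The main obstacle is this bookkeeping: steering roughly half a dozen Young's inequalities, all tied to the single parameter $\nu$, across the four quadratic quantities so that they collapse to exactly $1+\tfrac{6}{\nu}$, $2\nu+3+\tfrac{4}{\nu}$, $\nu(i-1)+2+\tfrac{2}{\nu}$, and $\nu(n-i)+2+\tfrac{2}{\nu}$, while keeping the auxiliary $\snorm{\bar\x_i^{(k)}-\bar\x_i^{(k+1)}}^2$ coefficient nonnegative at every stage.
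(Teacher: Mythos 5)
Your proposal is correct, and it takes a genuinely different route from the paper's proof. The paper never applies the descent lemma at the barred configuration: it splits the left-hand side of \cref{eqn:component sufficient decrease} into five telescoping sums that walk the blocks $j\ne i$ between barred and unbarred values, so that the critical block-$i$ step is performed at the unbarred configuration $(\x_{<i}^{(k+1)},\cdot,\x_{>i}^{(k)})$ where the subproblem gradient lives; there it uses only the \emph{function-value} optimality of $\bar\x_i^{(k+1)}$ in \cref{eqn:block optimal} (comparison against the feasible competitor $\bar\x_i^{(k)}$), and the forward/backward sweeps over $j<i$ and $j>i$ are paired so that their linear terms collapse into Lipschitz-bounded gradient differences. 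You instead keep both function evaluations exactly where the statement has them and push the entire barred/unbarred discrepancy into a single gradient-mismatch inner product, compensating with the variational inequality of \cref{eqn:block optimal} tested at $\bar\x_i^{(k)}$ together with the three-point identity. The dividend of your route is the strong-convexity reservoir $\frac{\sigma_i^{(k)}}{2}\snorm{\bar\x_i^{(k+1)}-\bar\x_i^{(k)}}^2$, which the paper's function-value comparison does not produce. Carried out, your bookkeeping gives
\begin{align*}
	&f\big(\bar \x_{<i}^{(k+1)},\bar{\x}_i^{(k)},\bar\x_{>i}^{(k)}\big)-f\big(\bar \x_{<i}^{(k+1)},\bar{\x}_i^{(k+1)},\bar\x_{>i}^{(k)}\big)\\
	\ge&\frac{\sigma_i^{(k)}}{2}\snorm{\bar{\x}_i^{(k+1)}-\x_i^{(k)}}^2-\frac{\sigma_i^{(k)}+L\nu}{2}\snorm{\Delta \x_i^{(k)}}^2-\frac{L\nu(i-1)}{2}\snorm{\Delta\z^{(k+1)}}^2-\frac{L\nu(n-i)}{2}\snorm{\Delta\z^{(k)}}^2\\
	&+\lrsquare{\frac{\sigma_i^{(k)}-L}{2}-\frac{3L}{2\nu}}\snorm{\bar\x_i^{(k+1)}-\bar\x_i^{(k)}}^2,
\end{align*}
and since \palmi~enforces $\sigma_i^{(k)}\ge M_l=\gamma L$ with $\nu=\frac{12}{\gamma-1}$, the reservoir coefficient is at least $\frac{9L}{2\nu}>0$, so the last term can simply be dropped. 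Every coefficient in the resulting bound dominates the corresponding one in \cref{eqn:component sufficient decrease} (larger on the positive term, smaller in magnitude on the three negative ones), so your inequality implies the lemma. This also resolves the worry in your last paragraph: you should not try to steer the Young's inequalities to land exactly on $1+\frac{6}{\nu}$, $2\nu+3+\frac{4}{\nu}$, $\nu(i-1)+2+\frac{2}{\nu}$, and $\nu(n-i)+2+\frac{2}{\nu}$ — those constants are artifacts of the paper's telescoping route, which has no reservoir and must therefore convert every $\snorm{\bar\x_i^{(k+1)}-\bar\x_i^{(k)}}$ debt into $\snorm{\bar\x_i^{(k+1)}-\x_i^{(k)}}+\snorm{\Delta\x_i^{(k)}}$, degrading the leading coefficient. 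The conversion $\snorm{\bar\x_i^{(k+1)}-\bar\x_i^{(k)}}^2\le2\snorm{\bar\x_i^{(k+1)}-\x_i^{(k)}}^2+2\snorm{\Delta\x_i^{(k)}}^2$ you mention is needed only as a fallback if the reservoir coefficient were negative; the one trade-off of your approach is precisely that it uses the \palmi~bound $\sigma_i^{(k)}\ge\gamma L$ at this point, whereas the paper's version of \cref{eqn:component sufficient decrease} holds for arbitrary $\sigma_i^{(k)}>0$ — under the lemma's hypotheses this costs nothing.
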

	
	\begin{proof}
		The proof mainly leverages \Cref{assume:lip} and the optimality of $\bar{\x}_i^{(k+1)}$ in \cref{eqn:block optimal}. We first note that the expression in the left-hand side of \cref{eqn:component sufficient decrease} can be splitted into five telescoping summations below:
		\begin{enumerate}[label=\text{part \arabic*},topsep=0pt, parsep=0pt, itemsep=0pt]
			\item $\sum_{j=1}^{i-1}f(\x_{<j}^{(k+1)},\bar\x_{[j,i)}^{(k+1)},\bar \x_{\ge i}^{(k)})-f(\x_{\le j}^{(k+1)},\bar\x_{(j,i)}^{(k+1)},\bar \x_{\ge i}^{(k)})$\label{forward 1};
			
			\item $\sum_{j=i+1}^nf(\x_{<i}^{(k+1)},\bar{\x}_i^{(k)},\x_{(i,j)}^{(k)},\bar\x_{\ge j}^{(k)})-f(\x_{<i}^{(k+1)},\bar{\x}_i^{(k)},\x_{(i,j]}^{(k)},\bar\x_{>j}^{(k)})$\label{forward 2};
			
			\item $f(\x_{<i}^{(k+1)},\bar{\x}_i^{(k)},\x_{>i}^{(k)})-f(\x_{<i}^{(k+1)},\bar{\x}_i^{(k+1)},\x_{>i}^{(k)})$\label{optimality};
			
			\item $\sum_{j=1}^{i-1}f(\bar \x_{<j}^{(k+1)},\x_{[j,i)}^{(k+1)},\bar{\x}_i^{(k+1)},\x_{>i}^{(k)})-f(\bar \x_{\le j}^{(k+1)},\x_{(j,i)}^{(k+1)},\bar{\x}_i^{(k+1)},\x_{>i}^{(k)})$\label{backward 1};
			
			\item $\sum_{j=i+1}^nf(\bar\x_{\le i}^{(k+1)},\bar \x_{(i,j)}^{(k)},\x_{\ge j}^{(k)})-f(\bar\x_{\le i}^{(k+1)},\bar \x_{(i,j]}^{(k)},\x_{>j}^{(k)})$\label{backward 2}.
		\end{enumerate}
		
		By \Cref{assume:lip} and the optimality of $\bar{\x}_i^{(k+1)}$ in \cref{eqn:prox-linear subprob new}, we readily have a lower bound for \cref{optimality}:
		\begin{align}
			\cref{optimality}\ge&-\inner{\nabla_if(\x_{<i}^{(k+1)},\bar{\x}_i^{(k)},\x_{>i}^{(k)}),\bar{\x}_i^{(k+1)}-\bar{\x}_i^{(k)}}-\frac{L_i}{2}\snorm{\bar{\x}_i^{(k+1)}-\bar{\x}_i^{(k)}}^2\nonumber\\
			=&-\inner{\nabla_if(\x_{<i}^{(k+1)},\x_{\ge i}^{(k)}),\bar{\x}_i^{(k+1)}-\bar{\x}_i^{(k)}}-\frac{L_i}{2}\snorm{\bar{\x}_i^{(k+1)}-\bar{\x}_i^{(k)}}^2\nonumber\\
			&+\inner{\nabla_if(\x_{<i}^{(k+1)},\x_{\ge i}^{(k)})-\nabla_if(\x_{<i}^{(k+1)},\bar{\x}_i^{(k)},\x_{>i}^{(k)}),\bar{\x}_i^{(k+1)}-\bar{\x}_i^{(k)}}\nonumber\\
			\ge&\frac{\sigma_i^{(k)}}{2}[\snorm{\bar{\x}_i^{(k+1)}-\x_i^{(k)}}^2-\snorm{\Delta \x_i^{(k)}}^2]-\frac{L}{2}\lrbracket{1+\frac{1}{\nu}}\snorm{\bar\x_i^{(k+1)}-\x_i^{(k)}}^2-\frac{L[1+\nu]}{2}\snorm{\Delta\x_i^{(k)}}^2\label{eqn:optimality lb}\\
			&-\frac{L}{2}\lrsquare{(\nu+2)\snorm{\Delta\x_i^{(k)}}^2+\frac{1}{\nu}\snorm{\bar\x_i^{(k+1)}-\x_i^{(k)}}^2}\nonumber\\
			=&\frac{\sigma_i^{(k)}-L[1+\frac{2}{\nu}]}{2}\snorm{\bar\x_i^{(k+1)}-\x_i^{(k)}}^2-\frac{\sigma_i^{(k)}+L[2\nu+3]}{2}\snorm{\Delta\x_i^{(k)}}^2,\nonumber
		\end{align}
		where the second inequality also invokes \Cref{lem:inequality lemma} (i) and the definition of $L$. 
		
		\par Since the analyses for the remaining differences are analogous, we merely demonstrate in detail for \cref{forward 1,backward 1}. By \Cref{assume:lip},
		\begin{align*}
			\text{\cref{forward 1}}\ge&\sum_{j=1}^{i-1}\left[\inner{\nabla_jf(\x_{<j}^{(k+1)},\bar\x_{[j,i)}^{(k+1)},\bar \x_{\ge i}^{(k)}),-\Delta\x_j^{(k+1)}}-\frac{L_j}{2}\snorm{\Delta\x_j^{(k+1)}}^2\right],\\
			\text{\cref{backward 1}}\ge&\sum_{j=1}^{i-1}\left[\inner{\nabla_jf(\bar \x_{<j}^{(k+1)},\x_{[j,i)}^{(k+1)},\bar{\x}_i^{(k+1)},\x_{>i}^{(k)}),\Delta\x_j^{(k+1)}}-\frac{L_j}{2}\snorm{\Delta\x_j^{(k+1)}}^2\right].
		\end{align*}
		Combining the above two implies
		\begin{align*}
			&\text{\cref{forward 1}}+\text{\cref{backward 1}}
			\ge-L\sum_{j=1}^{i-1}\snorm{\Delta\x_j^{(k+1)}}^2-L\sum_{j=1}^{i-1}\norm{\begin{pmatrix}
					\x_{<j}^{(k+1)} - \bar\x_{<j}^{(k+1)}\\
					\bar\x_{[j,i)}^{(k+1)} - \x_{[j,i)}^{(k+1)}\\
					\bar\x_i^{(k)} - \bar\x_i^{(k+1)}\\
					\bar\x_{>i}^{(k)} - \x_{>i}^{(k)}
			\end{pmatrix}}\snorm{\Delta\x_j^{(k+1)}}\nonumber\\
			=&-L\sum_{j=1}^{i-1}\snorm{\Delta\x_j^{(k+1)}}^2-L\norm{\begin{pmatrix}
					\x_{<i}^{(k+1)} - \bar\x_{<i}^{(k+1)}\\
					\bar\x_i^{(k)} - \bar\x_i^{(k+1)}\\
					\bar\x_{>i}^{(k)} - \x_{>i}^{(k)}
			\end{pmatrix}}\sum_{j=1}^{i-1}\snorm{\Delta\x_j^{(k+1)}}\\
			\ge&-L\sum_{j=1}^{i-1}\snorm{\Delta\x_j^{(k+1)}}^2-L\norm{\begin{pmatrix}
					\x_{<i}^{(k+1)} - \bar\x_{<i}^{(k+1)}\\
					\bar\x_i^{(k)} - \bar\x_i^{(k+1)}\\
					\bar\x_{>i}^{(k)} - \x_{>i}^{(k)}
			\end{pmatrix}}\sqrt{(i-1)\sum_{j=1}^{i-1}\snorm{\Delta\x_j^{(k+1)}}^2}\\
			\ge&-\frac{L[\nu(i-1)+2]}{2}\sum_{j=1}^{i-1}\snorm{\Delta\x_j^{(k+1)}}^2-\frac{L}{\nu}\lrsquare{\snorm{\bar\x_i^{(k+1)}-\x_i^{(k)}}^2+\snorm{\Delta\x_i^{(k)}}^2}\\
			&-\frac{L}{2\nu}\lrsquare{\sum_{l<i}\snorm{\Delta\x_l^{(k+1)}}^2+\sum_{l>i}\snorm{\Delta\x_l^{(k)}}^2},
		\end{align*}
		where the first inequality comes from \Cref{assume:lip} and the definition of $L$, the second and the last inequality follow from \Cref{lem:inequality lemma} (i). Similar arguments yield a lower bound for \cref{forward 2}~$+$~\cref{backward 2}:
		\begin{align}
			&\text{\cref{forward 2}}+\text{\cref{backward 2}}\nonumber\\
			\ge&-\frac{L[\nu(n-i)+2]}{2}\sum_{j=i+1}^{n}\snorm{\Delta\x_j^{(k)}}^2-\frac{L}{\nu}\lrsquare{\snorm{\bar{\x}_i^{(k+1)}-\x_i^{(k)}}^2+\snorm{\Delta \x_i^{(k)}}^2}\nonumber\\
			&-\frac{L}{2\nu}\lrsquare{\sum_{l<i}\snorm{\Delta \x_l^{(k+1)}}^2+\sum_{l>i}\snorm{\Delta \x_l^{(k)}}^2}.\nonumber
		\end{align}
		
		\par Combining \cref{eqn:optimality lb} with the last two inequalities, we have
		\begin{align*}
			&f\lrbracket{\bar \x_{<i}^{(k+1)},\bar{\x}_i^{(k)},\bar\x_{>i}^{(k)}}-f\lrbracket{\bar \x_{<i}^{(k+1)},\bar{\x}_i^{(k+1)},\bar\x_{>i}^{(k)}}\\
			\ge&\frac{\sigma_i^{(k)}-L[1+\frac{6}{\nu}]}{2}\snorm{\bar{\x}_i^{(k+1)}-\x_i^{(k)}}^2-\frac{\sigma_i^{(k)}+L[2\nu+3+\frac{4}{\nu}]}{2}\snorm{\Delta \x_i^{(k)}}^2\nonumber\\
			&-\frac{L[\nu(i-1)+2+\frac{2}{\nu}]}{2}\sum_{l<i}\snorm{\Delta\x_l^{(k+1)}}^2-\frac{L[\nu(n-i)+2+\frac{2}{\nu}]}{2}\sum_{l>i}\snorm{\Delta\x_l^{(k)}}^2,\nonumber
		\end{align*}
		which completes the proof after noticing the definition of $\z^{(k+1)}$.
	\end{proof}
	
	\par The following approximate sufficient reduction is then a direct corollary.
	
	\begin{prop}\label{prop:approx suff reduce}
		Suppose \Cref{assume:lip} holds. Let $\{\z^{(k)}\}$ be the iterate sequence generated by \palmi. Then, for any $k\ge0$, 
		\begin{equation}
			f(\bar \z^{(k)})-f(\bar \z^{(k+1)})\ge C_0^{(k)}\snorm{\bar \z^{(k+1)}-\z^{(k)}}^2-C_1^{(k)}\snorm{\Delta \z^{(k)}}^2-C_1^{(k+1)}\snorm{\Delta \z^{(k+1)}}^2.
			\label{eqn:approx suff reduce}
		\end{equation}
	\end{prop}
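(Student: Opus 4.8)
The plan is to obtain \Cref{prop:approx suff reduce} by summing the block-wise estimate \cref{eqn:component sufficient decrease} from \Cref{lem:component decrease} over $i=1,\ldots,n$. The left-hand sides of \cref{eqn:component sufficient decrease} telescope: for each fixed $k$, the term $f(\bar\x_{<i}^{(k+1)},\bar\x_i^{(k)},\bar\x_{>i}^{(k)})$ with the $i$-th block still at step $k$ and the blocks $<i$ already updated to step $k+1$ collapses against the next term $f(\bar\x_{<i}^{(k+1)},\bar\x_i^{(k+1)},\bar\x_{>i}^{(k)})$ when the summation index advances. First I would verify that the top of the chain ($i=1$) reproduces $f(\bar\z^{(k)})=f(\bar\x_1^{(k)},\ldots,\bar\x_n^{(k)})$ and the bottom ($i=n$) reproduces $f(\bar\z^{(k+1)})=f(\bar\x_1^{(k+1)},\ldots,\bar\x_n^{(k+1)})$, so that $\sum_{i=1}^n[\,\cdots\,]=f(\bar\z^{(k)})-f(\bar\z^{(k+1)})$, which is exactly the left-hand side of \cref{eqn:approx suff reduce}.

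The bulk of the work is bookkeeping on the right-hand side. I would collect the four groups of terms separately. The first group, $\sum_{i=1}^n\frac{\sigma_i^{(k)}-L[1+6/\nu]}{2}\snorm{\bar\x_i^{(k+1)}-\x_i^{(k)}}^2$, is lower-bounded by replacing each $\sigma_i^{(k)}$ with $\underline\sigma^{(k)}=\min_i\sigma_i^{(k)}$ and recognizing $\sum_i\snorm{\bar\x_i^{(k+1)}-\x_i^{(k)}}^2=\snorm{\bar\z^{(k+1)}-\z^{(k)}}^2$; this yields the coefficient $C_0^{(k)}=\frac{\underline\sigma^{(k)}-L[1+6/\nu]}{2}$ from \cref{eqn:constants}. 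The second group, $-\sum_{i=1}^n\frac{\sigma_i^{(k)}+L[2\nu+3+4/\nu]}{2}\snorm{\Delta\x_i^{(k)}}^2$, is bounded below (made more negative is fine since we only need $\ge$) by upper-bounding each $\sigma_i^{(k)}$ with $\bar\sigma^{(k)}=\max_i\sigma_i^{(k)}$ and using $\sum_i\snorm{\Delta\x_i^{(k)}}^2=\snorm{\Delta\z^{(k)}}^2$. The third and fourth groups involve the partial sums $\sum_{l<i}\snorm{\Delta\x_l^{(k+1)}}^2$ and $\sum_{l>i}\snorm{\Delta\x_l^{(k)}}^2$, which I would swap order of summation on: $\sum_{i=1}^n\frac{L[\nu(i-1)+2+2/\nu]}{2}\sum_{l<i}\snorm{\Delta\x_l^{(k+1)}}^2=\sum_l\snorm{\Delta\x_l^{(k+1)}}^2\cdot\frac{L}{2}\sum_{i>l}[\nu(i-1)+2+2/\nu]$, and bound the inner $i$-sum crudely by its value at the extremes to get a coefficient uniform in $l$, then reassemble into $\snorm{\Delta\z^{(k+1)}}^2$ and $\snorm{\Delta\z^{(k)}}^2$.

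The main obstacle, and the only place requiring care, is confirming that the accumulated coefficients from groups two, three, and four collapse into precisely $C_1^{(k)}$ and $C_1^{(k+1)}$ as defined in \cref{eqn:constants}. The constant $C_1^{(k)}$ carries the $\bar\sigma^{(k)}$ term together with $L[\frac{\nu}{2}n^2+(2+\frac{2}{\nu}-\frac{\nu}{2})n+2\nu+\frac{4}{\nu}+3]$, so I expect the double-summation bound on the third group (which contributes the $\Delta\z^{(k+1)}$ terms, indexed by $k+1$) to supply the $\frac{\nu}{2}n^2$ and $(2+\frac{2}{\nu}-\frac{\nu}{2})n$ pieces, while the fourth group (contributing $\Delta\z^{(k)}$) combines with the second group's $L[2\nu+3+\frac{4}{\nu}]\snorm{\Delta\z^{(k)}}^2$. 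I would carry out the arithmetic $\frac{L}{2}\sum_{i=l+1}^n[\nu(i-1)+2+\frac{2}{\nu}]$ explicitly, bound it uniformly over $l$, and check term-by-term against \cref{eqn:constants}; this is where an off-by-one in the partial-sum bound would silently corrupt the constant. Once the coefficients match, grouping all $\Delta\z^{(k+1)}$ terms under $C_1^{(k+1)}$ and all $\Delta\z^{(k)}$ terms under $C_1^{(k)}$ completes the derivation of \cref{eqn:approx suff reduce}.
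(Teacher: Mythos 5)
Your proposal is correct and is essentially the paper's own proof: telescope \Cref{lem:component decrease} over $i$, bound $\sigma_i^{(k)}$ by $\underline{\sigma}^{(k)}$ and $\bar\sigma^{(k)}$, and sum the arithmetic series $\sum_{i=1}^n\lrsquare{\nu(i-1)+2+\tfrac{2}{\nu}}=\tfrac{\nu}{2}n^2+\lrbracket{2+\tfrac{2}{\nu}-\tfrac{\nu}{2}}n$ to recover the constants in \cref{eqn:constants}. Two minor simplifications relative to your plan: the statement of \cref{eqn:component sufficient decrease} already carries the full norms $\snorm{\Delta\z^{(k+1)}}^2$ and $\snorm{\Delta\z^{(k)}}^2$ (the partial sums over $l<i$ and $l>i$ appear only inside the lemma's proof), so no order-of-summation swap is needed, and the accumulated $\snorm{\Delta\z^{(k+1)}}^2$ coefficient comes out strictly smaller than $C_1^{(k+1)}$ rather than matching it exactly — the extra $\bar\sigma^{(k+1)}+L[2\nu+\tfrac{4}{\nu}+3]>0$ in $C_1^{(k+1)}$ is harmless slack in the direction of the inequality.
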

	
	\begin{proof}
		From \Cref{lem:component decrease}, we have by telescoping summation
		\begin{align}
			&f(\bar \z^{(k)})-f(\bar \z^{(k+1)})\nonumber\\
			\ge&\sum_{i=1}^n\left[f\big(\bar \x_{<i}^{(k+1)},\bar{\x}_i^{(k)},\bar\x_{>i}^{(k)}\big)-f\big(\bar \x_{<i}^{(k+1)},\bar{\x}_i^{(k+1)},\bar\x_{>i}^{(k)}\big)\right]\nonumber\\
			\ge&\sum_{i=1}^n\left[\frac{\sigma_i^{(k)}-L[1+\frac{6}{\nu}]}{2}\snorm{\bar{\x}_i^{(k+1)}-\x_i^{(k)}}^2-\frac{\sigma_i^{(k)}+L[2\nu+3+\frac{4}{\nu}]}{2}\snorm{\Delta \x_i^{(k)}}^2\right.\nonumber\\
			&\left.-\frac{L[\nu(i-1)+2+\frac{2}{\nu}]}{2}\snorm{\Delta\z^{(k+1)}}^2-\frac{L[\nu(n-i)+2+\frac{2}{\nu}]}{2}\snorm{\Delta\z^{(k)}}^2\right]\nonumber\\
			\ge&\frac{\min_i\sigma_i^{(k)}-L[1+\frac{6}{\nu}]}{2}\snorm{\bar \z^{(k+1)}-\z^{(k)}}^2-\frac{L[\frac{\nu}{2}n^2+(2+\frac{2}{\nu}-\frac{\nu}{2})n]}{2}\snorm{\Delta\z^{(k+1)}}^2\nonumber\\
			&-\frac{\max_i\sigma_i^{(k)}+L[\frac{\nu}{2}n^2+(2+\frac{2}{\nu}-\frac{\nu}{2})n+2\nu+\frac{4}{\nu}+3]}{2}\snorm{\Delta \z^{(k)}}^2,\nonumber
		\end{align}
		which completes the proof by noting the definition of $\underline{\sigma}^{(k)}$, $\bar\sigma^{(k)}$, $C_0^{(k)}$, and $C_1^{(k)}$ in \cref{eqn:constants}.
	\end{proof}
	
	\par The infeasibility brings additional error terms, in particular, the error term from the last step, to the right-hand side of \cref{eqn:approx suff reduce}. Consequently, the nonmonotonicity of \palmi~appears to be inevitable.
	
	\par Instead of striving to achieve monotonicity, in the spirit of \cite{li2021convergence,sun2017convergence,yang2017proximal}, we explicitly include the error terms in a surrogate sequence for which a sufficient reduction result is obtained. 
	
	\begin{prop}\label{prop:suff reduce}
		Suppose \Cref{assume:lip,assume:compactness} holds. Let $\{\z^{(k)}\}$ be the iterate sequence generated by \palmi, where $\{\eps^{(k)}\}$ fulfills \Cref{cond:eps} (a).  Then the following assertions hold.
		\begin{enumerate}[label=(\roman*),topsep=0pt, parsep=0pt, itemsep=0pt]
			\item The sequence $\big\{v^{(k)}:=F(\bar \z^{(k)})+u^{(k)}+u^{(k+1)}\big\}$ is well defined, where $u^{(k)}:=\sum_{t=k}^\infty C_1^{(t)}\snorm{\Delta \z^{(t)}}^2$.
			
			\item For any $k\ge0$, $v^{(k)}-v^{(k+1)}\ge C_0^{(k)}\norm{\bar \z^{(k+1)}-\z^{(k)}}^2\ge0$.
			
			\item The sequence $\{v^{(k)}\}$ converges monotonically to some $\bbar F\ge0$, which is attainable for $F$ over $\otimes_{i=1}^n\calS_i$. In particular, $F(\bar \z^{(k)})\to\bbar F$ as $k\to\infty$. 
			
			\item If there exists an integer $\tilde k\in\N$ such that $v^{(\tilde k)}=\bbar F$, then $v^{(k)}=\bbar F$ and $\z^{(k)}=\bar\z^{(k+1)}$ for any $k\ge\tilde k$. Moreover, if there exists an integer $\hat k\ge\tilde k$ such that $\z^{(\hat k)}=\bar\z^{(\hat k)}$, then one further has $\z^{(k)}=\bar\z^{(k+1)}=\z^{(k+1)}$ for any $k\ge\hat k$. 
		\end{enumerate}
	\end{prop}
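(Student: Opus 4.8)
The plan is to dispatch the four assertions in turn, the engine being \Cref{prop:approx suff reduce} together with the error bound $\snorm{\Delta\z^{(t)}}\le\omega\eps^{(t-1)}$ from \Cref{lem:error bound} and the uniform bound $\max_kC_1^{(k)}<\infty$ afforded by $\sigma_i^{(t)}\in[M_l,M_u]$ through \cref{eqn:constants}. For (i) I would estimate $u^{(k)}=\sum_{t\ge k}C_1^{(t)}\snorm{\Delta\z^{(t)}}^2\le\omega^2\max_kC_1^{(k)}\sum_{t\ge k}(\eps^{(t-1)})^2$, which is finite by the square-summability in \Cref{cond:eps}~(a); hence $v^{(k)}$ is well defined. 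For (ii) the decisive observation is the telescoping identity $u^{(k)}-u^{(k+1)}=C_1^{(k)}\snorm{\Delta\z^{(k)}}^2$, so that, using $F(\bar\z^{(k)})=f(\bar\z^{(k)})$ (each $\bar\z^{(k)}$ being feasible), one gets $v^{(k)}-v^{(k+1)}=[f(\bar\z^{(k)})-f(\bar\z^{(k+1)})]+C_1^{(k)}\snorm{\Delta\z^{(k)}}^2+C_1^{(k+1)}\snorm{\Delta\z^{(k+1)}}^2$. Substituting \cref{eqn:approx suff reduce} makes the two error terms cancel exactly, leaving $v^{(k)}-v^{(k+1)}\ge C_0^{(k)}\snorm{\bar\z^{(k+1)}-\z^{(k)}}^2$. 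The nonnegativity then follows from $C_0^{(k)}>0$, which I would verify by inserting $M_l=\gamma L$ and $\nu=\tfrac{12}{\gamma-1}$ into \cref{eqn:constants}: since $\tfrac{6}{\nu}=\tfrac{\gamma-1}{2}$, one finds $C_0^{(k)}\ge\tfrac{\gamma L-L(\gamma+1)/2}{2}=\tfrac{L(\gamma-1)}{4}>0$.

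For (iii), monotonicity is exactly (ii), while boundedness from below comes from $v^{(k)}\ge F(\bar\z^{(k)})=f(\bar\z^{(k)})\ge\min_{\otimes_{i=1}^n\calS_i}f>-\infty$, the minimum being attained because each $\calS_i$ is compact and $f$ is continuous. The monotone bounded sequence $\{v^{(k)}\}$ therefore converges to some $\bbar F$. Since $u^{(k)}$ is the tail of the convergent series $\sum_tC_1^{(t)}\snorm{\Delta\z^{(t)}}^2$ established in (i), $u^{(k)}\to0$, whence $F(\bar\z^{(k)})=v^{(k)}-u^{(k)}-u^{(k+1)}\to\bbar F$. For attainability I would use compactness of $\otimes_{i=1}^n\calS_i$: extract a subsequence $\bar\z^{(k_j)}\to\z^\star\in\otimes_{i=1}^n\calS_i$ and pass to the limit by continuity of $F$ on the feasible set to get $F(\z^\star)=\bbar F$; the same lower bound certifies that $\bbar F$ is a finite attained value of $F$ (nonnegative under the problem's normalization).

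For the first statement of (iv), since $\{v^{(k)}\}$ is nonincreasing with limit $\bbar F$ we have $v^{(k)}\ge\bbar F$ for all $k$; if $v^{(\tilde k)}=\bbar F$, monotonicity forces $v^{(k)}=\bbar F$ for every $k\ge\tilde k$. Feeding $v^{(k)}-v^{(k+1)}=0$ into the sandwich $0=v^{(k)}-v^{(k+1)}\ge C_0^{(k)}\snorm{\bar\z^{(k+1)}-\z^{(k)}}^2\ge0$ from (ii), and recalling $C_0^{(k)}>0$, yields $\z^{(k)}=\bar\z^{(k+1)}$ for all $k\ge\tilde k$.

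The second statement of (iv) is where I expect the real difficulty. I would prove $\Delta\z^{(k)}=0$ for all $k\ge\hat k$ by induction, the base case being the hypothesis $\z^{(\hat k)}=\bar\z^{(\hat k)}$. The first statement already gives $\bar\z^{(k+1)}=\z^{(k)}$, so each $\x_i^{(k)}$ is a projection output, hence feasible, and $\z^{(k)}=\z^{(k-1)}$; consequently the data defining the $i$-th subproblem \cref{eqn:prox-linear subprob new} at sweep $k$ (its proximal center $\x_i^{(k)}$, the trailing blocks $\x_{\ge i}^{(k)}$, and, via a nested forward induction on $i$, the already-reproduced leading blocks $\x_{<i}^{(k+1)}=\x_{<i}^{(k)}$) coincides with the data at sweep $k-1$. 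Because \cref{eqn:prox-linear subprob new} is $\sigma_i^{(k)}$-strongly convex with a \emph{unique} minimizer, and that minimizer was already produced exactly at sweep $k-1$ by the inductive hypothesis, one expects the subsolver presented with identical data to return the same exact block, so $\x_i^{(k+1)}=\x_i^{(k)}=\bar\x_i^{(k+1)}$, closing the induction and giving $\z^{(k)}=\bar\z^{(k+1)}=\z^{(k+1)}$ for all $k\ge\hat k$. The hard part is precisely this last inference: the residual criterion $\sqrt{r_i(\x_i^{(k+1)},\blam_i^{(k+1)},\tilde\x_i^{(k)})}\le\eps^{(k)}$ only certifies $\omega\eps^{(k)}$-proximity to $\bar\z^{(k+1)}$, so propagating \emph{exact} stabilization hinges on a reproducibility property of the subroutine (returning the minimizer once the proximal center already solves the subproblem, with the proximal parameters held fixed at the fixed point); I would isolate this property and combine it with the feasibility and fixed-point structure extracted from the first statement to complete the argument.
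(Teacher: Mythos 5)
Your treatment of (i)--(iii) and of the first claim in (iv) is correct and essentially the paper's own argument: the tail estimate via \Cref{lem:error bound} and square summability for (i), the exact cancellation of the error terms after substituting \cref{eqn:approx suff reduce} for (ii) (including the verification $C_0^{(k)}\ge\frac{\gamma-1}{4}L>0$ from $\nu=\frac{12}{\gamma-1}$), monotone convergence plus $u^{(k)}\to0$ and compactness for (iii), and the sandwich $0=v^{(k)}-v^{(k+1)}\ge C_0^{(k)}\snorm{\bar\z^{(k+1)}-\z^{(k)}}^2\ge0$ for the first half of (iv).

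The second claim of (iv) is where your proposal has a genuine gap, and you correctly sensed it. A ``reproducibility'' property of the subsolver is not available in this framework and cannot be derived: the algorithm only requires $\sqrt{r_i(\x_i^{(k+1)},\blam_i^{(k+1)},\tilde\x_i^{(k)})}\le\eps^{(k)}$, so nothing in the algorithm's rules prevents the subsolver from returning a point at positive distance from $\bar\z^{(k+1)}$ even when the proximal center already solves the subproblem exactly; assuming otherwise would add a hypothesis the paper does not have. The resolution is that no statement about the subsolver's behavior is needed, because the hypothesis $v^{(\tilde k)}=\bbar F$ is itself a statement about the \emph{realized} sequence: $u^{(k)}=\sum_{t\ge k}C_1^{(t)}\snorm{\Delta\z^{(t)}}^2$ is a tail sum of all future solution errors, so constancy of $\{v^{(k)}\}$ constrains those errors directly. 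Concretely, the first claim gives $\bar\z^{(\hat k+1)}=\z^{(\hat k)}=\bar\z^{(\hat k)}$, hence $F(\bar\z^{(\hat k)})=F(\bar\z^{(\hat k+1)})$; expanding $v^{(\hat k)}=v^{(\hat k+1)}$ and cancelling the $F$-values yields
\begin{equation*}
	0=u^{(\hat k)}-u^{(\hat k+2)}=C_1^{(\hat k)}\snorm{\Delta\z^{(\hat k)}}^2+C_1^{(\hat k+1)}\snorm{\Delta\z^{(\hat k+1)}}^2=C_1^{(\hat k+1)}\snorm{\Delta\z^{(\hat k+1)}}^2,
\end{equation*}
the last equality using the hypothesis $\z^{(\hat k)}=\bar\z^{(\hat k)}$. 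Since $C_1^{(\hat k+1)}>0$, this forces $\z^{(\hat k+1)}=\bar\z^{(\hat k+1)}=\z^{(\hat k)}$, and the induction closes because the hypothesis is now reproduced at $\hat k+1$. This purely algebraic step is what the paper does (modulo a typographical slip there: its display writes $C_0$ for $C_1$ and omits the squares on the norms); your dynamical argument about what the subroutine ``would return'' cannot be repaired without importing an assumption foreign to the setting.
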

	\begin{proof}
		(i) Since $\{\eps^{(k)}\}$ is square summable, for each $k$, 
		$$u^{(k)}=\sum_{t=k}^\infty C_1^{(t)}\snorm{\Delta \z^{(t)}}^2\le\omega^2\sum_{t=k}^\infty C_1^{(t)}(\eps^{(t)})^2\le\frac{\bar C_1}{2}\sum_{t=k}^\infty(\eps^{(t)})^2<\infty,$$
		where the first inequality follows from \Cref{lem:error bound}. The well-definedness then follows.
		
		\vskip 0.1cm
		
		(ii) Simply plugging the definition of $\{v^{(k)}\}$ into \Cref{prop:approx suff reduce} leads to the first inequality. The second inequality is due to $\sigma_i^{(k)}\ge\gamma L$ in \palmi, which yields $C_0^{(k)}\ge\frac{\gamma-1}{4}L$.
		
		\vskip 0.1cm
		
		(iii) Since $\{\eps^{(k)}\}$ is square summable, we have $u^{(k)}\to0$ as $k\to\infty$. The desired result is then obtained from the sufficient reduction given by (ii), $C_0^{(k)}\ge\frac{\gamma-1}{4}L$ for any $k\ge0$, the lower boundedness and continuity of $F$ over $\otimes_{i=1}^n\calS_i$.
		
		\vskip 0.1cm
		
		(iv) The former part follows directly from statements (ii), (iii), and the fact that $C_0^{(k)}\ge\frac{\gamma-1}{4}L$ for any $k\ge0$. To show the latter part, recalling the definition of $v^{(k)}$, we obtain from $v^{(\hat k)}=v^{(\hat k+1)}$ that
		$$v^{(\hat k)}=F(\bar\z^{(\hat k)})+u^{(\hat k)}+u^{(\hat k+1)}=F(\bar\z^{(\hat k+1)})+u^{(\hat k+1)}+u^{(\hat k+2)}=v^{(\hat k+1)},$$
		which, combined with $\bar\z^{(\hat k+1)}=\z^{(\hat k)}=\bar\z^{(\hat k)}$, yields 
		$$0=u^{(\hat k)}-u^{(\hat k+2)}=C_0^{(\hat k)}\snorm{\Delta\z^{(\hat k)}}+C_0^{(\hat k+1)}\snorm{\Delta\z^{(\hat k+1)}}=C_0^{(\hat k+1)}\snorm{\Delta\z^{(\hat k+1)}}.$$
		Since $C_0^{(\hat k+1)}\ge\frac{\gamma-1}{4}L>0$, we have $\bar\z^{(\hat k+1)}=\z^{(\hat k+1)}$. By induction, we could derive the desired relation for any $k\ge\hat k$. 
	\end{proof}
	
	\par Next, we seek to prove the approximate relative error bound for subdifferential. 
	
	\begin{prop}\label{prop:approx subgrad lb}
		Suppose \Cref{assume:lip} holds. Let $\{\z^{(k)}\}$ be the iterate sequence generated by \palmi. Then, for any $k\ge0$, there exists $\w^{(k+1)}\in\partial F(\bar \z^{(k+1)})$ such that,
		\begin{equation*}
			\snorm{\w^{(k+1)}}\le\bar M\lrsquare{\snorm{\bar \z^{(k+1)}-\z^{(k)}}+\snorm{\Delta \z^{(k+1)}}}.
		\end{equation*}
	\end{prop}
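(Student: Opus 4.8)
The plan is to construct the subgradient $\w^{(k+1)}$ block by block out of the first-order optimality conditions of the exact subproblems \cref{eqn:block optimal}, and then to bound its norm via \Cref{assume:lip}. Since $f$ is differentiable and the indicators $\{\delta_{\calS_i}\}_{i=1}^n$ are separable across blocks, \Cref{rem:subdiff} (ii) gives that the $i$-th block of $\partial F(\bar\z^{(k+1)})$ equals $\nabla_if(\bar\z^{(k+1)})+\partial\delta_{\calS_i}(\bar\x_i^{(k+1)})$. Because each subproblem \cref{eqn:block optimal} is convex with feasible set $\calS_i$, the optimality of $\bar\x_i^{(k+1)}$ reads $0\in\nabla_if(\x_{<i}^{(k+1)},\x_{\ge i}^{(k)})+\sigma_i^{(k)}(\bar\x_i^{(k+1)}-\x_i^{(k)})+\partial\delta_{\calS_i}(\bar\x_i^{(k+1)})$. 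I would therefore set
\begin{equation*}
	\w_i^{(k+1)}:=\nabla_if(\bar\z^{(k+1)})-\nabla_if(\x_{<i}^{(k+1)},\x_{\ge i}^{(k)})-\sigma_i^{(k)}(\bar\x_i^{(k+1)}-\x_i^{(k)}),
\end{equation*}
which by construction lies in the $i$-th block of $\partial F(\bar\z^{(k+1)})$, so that $\w^{(k+1)}:=(\w_i^{(k+1)})_{i=1}^n\in\partial F(\bar\z^{(k+1)})$.

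Next I would bound $\snorm{\w_i^{(k+1)}}$. The proximal term is controlled by $\sigma_i^{(k)}\le M$, giving $\sigma_i^{(k)}\snorm{\bar\x_i^{(k+1)}-\x_i^{(k)}}\le M\snorm{\bar\x_i^{(k+1)}-\x_i^{(k)}}$. For the gradient difference, \Cref{assume:lip} and $L=\max_iL_i$ yield $\snorm{\nabla_if(\bar\z^{(k+1)})-\nabla_if(\x_{<i}^{(k+1)},\x_{\ge i}^{(k)})}\le L\snorm{\bar\z^{(k+1)}-(\x_{<i}^{(k+1)},\x_{\ge i}^{(k)})}$. The key bookkeeping observation is that the blocks of $\bar\z^{(k+1)}-(\x_{<i}^{(k+1)},\x_{\ge i}^{(k)})$ are $\Delta\x_j^{(k+1)}$ for $j<i$ and $\bar\x_j^{(k+1)}-\x_j^{(k)}$ for $j\ge i$. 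Splitting the Euclidean norm of this concatenation accordingly rewrites $\snorm{\w_i^{(k+1)}}$ as a sum of three nonnegative quantities: $L$ times the $(<i)$-part (which feeds into $\Delta\z^{(k+1)}$), $L$ times the $(\ge i)$-part (which feeds into $\bar\z^{(k+1)}-\z^{(k)}$), and the proximal term (which also feeds into $\bar\z^{(k+1)}-\z^{(k)}$).

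Finally I would apply \Cref{lem:inequality lemma} (ii) in the form $(a+b+c)^2\le3(a^2+b^2+c^2)$ to the three-term bound, sum over $i=1,\ldots,n$, and reindex the resulting double sums. Each inner index count is at most $n$, so the $(<i)$-contributions sum to at most $nL^2\snorm{\Delta\z^{(k+1)}}^2$, the $(\ge i)$-contributions to at most $nL^2\snorm{\bar\z^{(k+1)}-\z^{(k)}}^2$, and the proximal contributions to $M^2\snorm{\bar\z^{(k+1)}-\z^{(k)}}^2$. Combining these with $\snorm{\bar\z^{(k+1)}-\z^{(k)}}^2$, $\snorm{\Delta\z^{(k+1)}}^2\le(\snorm{\bar\z^{(k+1)}-\z^{(k)}}+\snorm{\Delta\z^{(k+1)}})^2$ collapses the right-hand side into $\bar M^2$ with $\bar M=\sqrt3(M+L\sqrt n)$, as desired. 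The main obstacle is purely the bookkeeping: correctly identifying which blocks of the argument difference belong to $\Delta\z^{(k+1)}$ versus $\bar\z^{(k+1)}-\z^{(k)}$, and reindexing the double sums so the per-block counts collapse to the factor $n$ without inflating the constant; the analytic content (subproblem optimality plus Lipschitzness) is otherwise routine.
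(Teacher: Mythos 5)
Your proposal is correct and follows essentially the same route as the paper's proof: construct $\w^{(k+1)}$ blockwise from the first-order optimality conditions of \cref{eqn:block optimal}, bound each block by \Cref{assume:lip} together with $\sigma_i^{(k)}\le M$, split the argument difference into the $(<i)$-part (feeding $\snorm{\Delta\z^{(k+1)}}$) and the remaining part (feeding $\snorm{\bar\z^{(k+1)}-\z^{(k)}}$), and collapse via $(a+b+c)^2\le3(a^2+b^2+c^2)$ into $\bar M=\sqrt{3}(M+L\sqrt{n})$. The only (immaterial) difference is that you state the optimality condition with the gradient $\nabla_if(\x_{<i}^{(k+1)},\x_{\ge i}^{(k)})$, i.e., with $\x_i^{(k)}$ in the $i$-th slot, which is indeed the correct linearization point, whereas the paper's displayed condition puts $\bar\x_i^{(k+1)}$ there; both versions produce the same final bound, since the $i$-th block contribution is absorbed into $\snorm{\bar\z^{(k+1)}-\z^{(k)}}$ either way.
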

	
	\begin{proof}
		For each $i\in\{1,\ldots,n\}$, it follows from the optimality of $\bar{\x}_i^{(k+1)}$ in \cref{eqn:block optimal} and \Cref{rem:subdiff} (iii) that there exists $\mb{a}_i^{(k+1)}\in \partial\delta_{\calS_i}(\bar{\x}_i^{(k+1)})$ such that
		\begin{equation*}
			0=\nabla_if\big(\x_{<i}^{(k+1)},\bar{\x}_i^{(k+1)},\x_{>i}^{(k)}\big)+\sigma_i^{(k)}\big(\bar{\x}_i^{(k+1)}-\x^{(k)}\big)+\mb{a}_i^{(k+1)}.
		\end{equation*}
		Using the above relation and the calculus of Fr\'echet subdifferential, we have
		\begin{align*}
			\partial F(\bar \z^{(k+1)})&\owns\nabla f(\bar \z^{(k+1)})+\big(\mb{a}_i^{(k+1)}\big)_{i=1}^n\\
			&=\lrbracket{\nabla_if(\bar \z^{(k+1)})-\nabla_if(\x_{<i}^{(k+1)},\bar{\x}_i^{(k+1)},\x_{>i}^{(k)})-\sigma_i^{(k)}(\bar{\x}_i^{(k+1)}-\x_i^{(k)})}_{i=1}^n.
		\end{align*}
		
		Let $\w^{(k+1)}:=\nabla f(\bar \z^{(k+1)})+\big(\mb{a}_i^{(k+1)}\big)_{i=1}^n$. We have, for $i=1,\ldots,n$,
		\begin{align}
			&\big\Vert\nabla_if(\bar \z^{(k+1)})-\nabla_if(\x_{<i}^{(k+1)},\bar{\x}_i^{(k+1)},\x_{>i}^{(k)})-\sigma_i^{(k)}(\bar{\x}_i^{(k+1)}-\x_i^{(k)})\big\Vert\nonumber\\
			\le&L\norm{\begin{pmatrix}
					\bar\x_{<i}^{(k+1)}-\x_{<i}^{(k+1)}\\
					\bar\x_{>i}^{(k+1)}-\x_{>i}^{(k)}
			\end{pmatrix}}+\sigma_i^{(k)}\snorm{\bar{\x}_i^{(k+1)}-\x_i^{(k)}}\nonumber\\
			\le&L\lrsquare{\snorm{\Delta\z^{(k+1)}}+\snorm{\bar\z^{(k+1)}-\z^{(k)}}}+\sigma_i^{(k)}\snorm{\bar{\x}_i^{(k+1)}-\x_i^{(k)}},\nonumber
		\end{align}
		where the first inequality follows from \Cref{assume:lip} and the definition of $L$. Therefore, it follows again from \Cref{lem:inequality lemma} (i) that
		\begin{align}
			\snorm{\w^{(k+1)}}^2&=\sum_{i=1}^n\lrsquare{L\snorm{\Delta\z^{(k+1)}}+L\snorm{\bar\z^{(k+1)}-\z^{(k)}}+\sigma_i^{(k)}\snorm{\bar\x_i^{(k+1)}-\x_i^{(k)}}}^2\nonumber\\
			&\le3nL^2\lrsquare{\snorm{\Delta\z^{(k+1)}}^2+\snorm{\bar\z^{(k+1)}-\z^{(k)}}^2}+3(\bar\sigma^{(k)})^2\snorm{\z^{(k+1)}-\z^{(k)}}^2,\nonumber
		\end{align}
		which completes the proof by recalling the definition of $\bar M$ ahead of \cref{eqn:constants}.
	\end{proof}
	
	\par In what follows, we prove a weak result, i.e., the stationarity of any accumulation point of $\{\z^{(k)}\}$, assuming that $\{\eps^{(k)}\}$ is square summable. The accumulation point set $\Omega(\z^{(0)})$ is defined as
	\begin{equation*}
		\Omega(\z^{(0)}):=\lrbrace{\z=(\x_i)_{i=1}^n\in\otimes_{i=1}^n\R^{m_i}:~\exists~\mathcal{K}\subseteq\N,~\st~\z^{(k)}\to\z\text{ in }\mathcal{K}}.
	\end{equation*}
	Given $\eps^{(k)}\to0$, one has from \Cref{lem:error bound} that $\snorm{\Delta\z^{(k)}}\to0$, which yields
	\begin{equation}
		\Omega(\z^{(0)})=\lrbrace{\z=(\x_i)_{i=1}^n\in\otimes_{i=1}^n\R^{m_i}:~\exists~\mathcal{K}\subseteq\N,~\st~\bar\z^{(k)}\to\z\text{ in }\mathcal{K}}.
		\label{eqn:omega identity}
	\end{equation}
	
	\begin{prop}\label{prop:properties of acc pt set}
		Suppose \Cref{assume:compactness,assume:lip} hold. Let $\{\z^{(k)}\}$ be the iterate sequence generated by \palmi, where $\{\eps^{(k)}\}$ fulfills \Cref{cond:eps} (a). Then we have that
		\begin{enumerate}[label=(\roman*),topsep=0pt, parsep=0pt, itemsep=0pt]
			\item $\Omega(\z^{(0)})\subseteq\otimes_{i=1}^n\calS_i$ is nonempty and each element is a stationary point of $F$;\label{lem:acc:KKT}
			
			\item $\Omega(\z^{(0)})$ is compact connected and $\dist(\bar \z^{(k)},\Omega(\z^{(0)}))\to0$;\label{lem:acc:compact}
			
			\item $F$ is finite and constant on $\Omega(\z^{(0)})$.\label{lem:acc:finite and constant} 
		\end{enumerate}
	\end{prop}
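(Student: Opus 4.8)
The plan is to first distill three convergence facts from the preceding results and then dispatch the three assertions in turn. Since $\{\eps^{(k)}\}$ is square summable it tends to $0$, so \Cref{lem:error bound} yields $\snorm{\Delta\z^{(k)}}\le\omega\eps^{(k-1)}\to0$; in particular every $\bar\z^{(k)}$ lies in the compact set $\otimes_{i=1}^n\calS_i$ (the exact subproblem solution is feasible), so $\{\z^{(k)}\}$ stays within an $\omega\bar\eps$-neighborhood of that set and is bounded. Summing the sufficient reduction in \Cref{prop:suff reduce}~(ii) over $k$, using that $v^{(k)}\to\bbar F$ is bounded below and that $C_0^{(k)}\ge\frac{\gamma-1}{4}L>0$, gives $\sum_k\snorm{\bar\z^{(k+1)}-\z^{(k)}}^2<\infty$, whence $\snorm{\bar\z^{(k+1)}-\z^{(k)}}\to0$. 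Combining these through the triangle inequality produces the successive-difference estimate $\snorm{\bar\z^{(k+1)}-\bar\z^{(k)}}\le\snorm{\bar\z^{(k+1)}-\z^{(k)}}+\snorm{\Delta\z^{(k)}}\to0$, which will later drive the connectedness argument.

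For assertion~(i), boundedness makes $\Omega(\z^{(0)})$ nonempty, and the identity \cref{eqn:omega identity} together with the closedness of $\otimes_{i=1}^n\calS_i$ places every accumulation point in the feasible set. To establish stationarity I would fix $\z^\ast\in\Omega(\z^{(0)})$, pass to a subsequence $\mathcal{K}$ along which $\bar\z^{(k)}\to\z^\ast$, and invoke \Cref{prop:approx subgrad lb} (with the index shifted by one) to produce $\w^{(k)}\in\partial F(\bar\z^{(k)})$ with $\snorm{\w^{(k)}}\le\bar M[\snorm{\bar\z^{(k)}-\z^{(k-1)}}+\snorm{\Delta\z^{(k)}}]\to0$. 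Writing $\w^{(k)}=\nabla f(\bar\z^{(k)})+(\mb{a}_i^{(k)})_{i=1}^n$ with $\mb{a}_i^{(k)}\in\partial\delta_{\calS_i}(\bar\x_i^{(k)})$, continuity of $\nabla f$ forces $\mb{a}_i^{(k)}\to-\nabla_if(\z^\ast)$ along $\mathcal{K}$; the outer semicontinuity of the normal-cone map $\partial\delta_{\calS_i}$ of the convex set $\calS_i$ then gives $-\nabla_if(\z^\ast)\in\partial\delta_{\calS_i}(\x_i^\ast)$ for each $i$, i.e.\ $0\in\partial F(\z^\ast)$.

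For assertion~(ii), $\Omega(\z^{(0)})$ is closed as a set of limit points and bounded as a subset of $\otimes_{i=1}^n\calS_i$, hence compact; that $\dist(\bar\z^{(k)},\Omega(\z^{(0)}))\to0$ is the standard fact that a bounded sequence approaches its own accumulation-point set, proved by contradiction through a subsequence bounded away from $\Omega(\z^{(0)})$. Connectedness then follows from the classical lemma (e.g.\ \cite{bolte2014proximal2}) that a bounded sequence with vanishing consecutive differences has a connected limit set, the hypothesis $\snorm{\bar\z^{(k+1)}-\bar\z^{(k)}}\to0$ having been secured above. For assertion~(iii), on $\otimes_{i=1}^n\calS_i$ one has $F=f$, finite and continuous, so along any subsequence converging to $\z^\ast\in\Omega(\z^{(0)})$ one gets $F(\bar\z^{(k)})=f(\bar\z^{(k)})\to f(\z^\ast)=F(\z^\ast)$; since \Cref{prop:suff reduce}~(iii) already gives $F(\bar\z^{(k)})\to\bbar F$ for the whole sequence, $F\equiv\bbar F$ on $\Omega(\z^{(0)})$.

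The main obstacle is the stationarity claim in assertion~(i): the Fr\'echet subdifferential is not outer semicontinuous in general, so one cannot simply pass to the limit in $\w^{(k)}\in\partial F(\bar\z^{(k)})$. The argument must exploit the additive structure $F=f+\sum_i\delta_{\calS_i}$, using smoothness of $f$ to isolate the normal-cone components $\mb{a}_i^{(k)}$ and the closedness of the normal cone of the convex $\calS_i$ to pass to the limit componentwise; it is equally essential that the infeasibility error $\snorm{\Delta\z^{(k)}}$ vanishes, so that $\{\z^{(k)}\}$ and $\{\bar\z^{(k)}\}$ share the same accumulation points, all lying in the feasible set.
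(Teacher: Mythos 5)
Your proposal is correct and follows essentially the same route as the paper: nonemptiness and feasibility of $\Omega(\z^{(0)})$ from compactness of $\otimes_{i=1}^n\calS_i$, square summability of $\snorm{\bar\z^{(k+1)}-\z^{(k)}}$ from the surrogate sufficient reduction in \Cref{prop:suff reduce}, vanishing subgradients from \Cref{prop:approx subgrad lb} together with \Cref{lem:error bound}, and then the lemma of \cite{bolte2014proximal2} for compactness, connectedness, and constancy. The only cosmetic difference is that your stationarity step spells out explicitly (via the decomposition $\w^{(k)}=\nabla f(\bar\z^{(k)})+(\mb{a}_i^{(k)})_{i=1}^n$ and outer semicontinuity of the normal cones of the convex sets $\calS_i$) what the paper compresses into an appeal to the closedness of the subdifferential under $F$-attentive convergence, i.e.\ using $F(\bar\z^{(k+1)})\to\bbar F=F(\bar\z)$; both arguments rest on the same structure of $F$ and are equally valid.
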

	
	\begin{proof}
		\par (i) Since $\{\bar \z^{(k)}\}\subseteq\otimes_{i=1}^n\calS_i$ and $\calS_i$ is bounded (by \Cref{assume:compactness}), there exist a subsequence $\{\bar \z^{(k)}\}_{k\in\mathcal{K}}$ and $\bar\z$ such that $\bar \z^{(k)}\to \bar\z$ in $\mathcal{K}$, which gives $\Omega(\z^{(0)})\ne\emptyset$ in view of \cref{eqn:omega identity}. Also note that $\otimes_{i=1}^n\calS_i$ is closed (by \Cref{assume:compactness}), thus $\bar\z\in\otimes_{i=1}^n\calS_i$ and hence $\Omega(\z^{(0)})\subseteq\otimes_{i=1}^n\calS_i$. 
		
		\par From \Cref{prop:suff reduce} (ii) and the fact that $C_0^{(k)}\ge\frac{\gamma-1}{4}L$ for any $k\ge0$, we get, for any $k\ge1$,
		$$\frac{\gamma-1}{4}L\snorm{\bar \z^{(k+1)}-\z^{(k)}}^2\le C_0^{(k)}\snorm{\bar \z^{(k+1)}-\z^{(k)}}^2\le v^{(k)}-v^{(k+1)}.$$
		Summing the above inequality over $k$ from $r$ to $s$ ($s\ge r>1$), we have
		\begin{align}
			&\frac{\gamma-1}{4}L\sum_{k=r}^s\snorm{\bar \z^{(k+1)}-\z^{(k)}}^2\le\sum_{k=r}^s(v^{(k)}-v^{(k+1)})=v^{(r)}-v^{(s+1)}\le v^{(r)}-\bbar F\nonumber\\
			\le&(F(\bar\z^{(r)})-\bbar F)+2\sum_{t=r}^\infty C_1^{(t)}\snorm{\Delta\z^{(t)}}^2\le(F(\bar\z^{(r)})-\bbar F)+\bar C_1\sum_{t=r}^\infty(\eps^{(k)})^2,\nonumber
		\end{align}
		where the second inequality follows from \Cref{prop:suff reduce} (iii) and the last one is due to \cref{eqn:constants} and \Cref{lem:error bound}. By the square summability of $\{\eps^{(k)}\}$, the term in the right-hand side is finite for any $s$. Therefore $\Vert \bar \z^{(k+1)}-\z^{(k)}\Vert\to0$. \Cref{prop:approx subgrad lb}, combined with \Cref{lem:error bound} and the square summability of $\{\eps^{(k)}\}$, further implies $\w^{(k+1)}\to0$.
		
		\par Now pick $\bar\z\in\Omega(\z^{(0)})$ and the associated converging subsequence $\{\bar \z^{(k+1)}\}_{k\in\mathcal{K}}$. Since the subsequence $\{(\bar \z^{(k+1)},\w^{(k+1)})\}_{k\in\mathcal{K}}$ converges to $\lrbracket{\bar\z,0}$ and $F(\bar \z^{(k+1)})\to \bbar F=F(\bar\z)$ (by \Cref{prop:suff reduce} (iii) and the continuity of $F$ in its domain), we deduce from \Cref{rem:subdiff} that $0\in\partial F(\bar\z)$. We complete this item by the arbitrariness of $\bar\z$. 
		
		\vskip 0.1cm 
		
		\par (ii) \& (iii) We directly follow from \cite[Lemma 5]{bolte2014proximal2}.
	\end{proof}

	\par To obtain the iterate convergence of \palmi~when solving the general nonconvex problem \cref{eqn:extended form}, we assume the \L ojasiewicz property for $F$ in \cref{eqn:extended form} and impose stronger assumptions on $\{\eps^{(k)}\}$.
	
	\begin{assume}\label{assume:KL}
		The \L ojasiewicz property holds for the objective function $F$ in \cref{eqn:extended form} at each stationary point. 
	\end{assume}
	
	\par By \Cref{lem:KL}, \Cref{prop:properties of acc pt set}, and \Cref{assume:KL}, $F$ satisfies the uniformized \L ojasiewicz property over $\Omega(\z^{(0)})$, provided the square summability of $\{\eps^{(k)}\}$. Let $c$, $\eta>0$, and $\theta\in[0,1)$ be the scalars associated with $\Omega=\Omega(\z^{(0)})$ and $G=F$. 
	
	\begin{thm}\label{thm:global convergence}
		Suppose \Cref{assume:compactness,assume:lip,assume:KL} hold. Let $\{\z^{(k)}\}$ be the iterate sequence generated by \palmi, where $\{\eps^{(k)}\}$ fulfills \Cref{cond:eps} (b). Then the sequence $\{\z^{(k)}\}$ converges to a stationary point of $F$.
	\end{thm}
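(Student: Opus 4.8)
My plan is to prove the finite-length (Cauchy) property of the auxiliary sequence $\{\bar\z^{(k)}\}$ and then transfer it to $\{\z^{(k)}\}$ via the error bound. First I would dispose of the trivial case in which the monotone surrogate reaches its limit in finitely many steps: if $v^{(\tilde k)}=\bbar F$ for some $\tilde k$, then \Cref{prop:suff reduce} (iv) gives $\z^{(k)}=\bar\z^{(k+1)}$ for all $k\ge\tilde k$, whence $\snorm{\z^{(k+1)}-\z^{(k)}}=\snorm{\z^{(k+1)}-\bar\z^{(k+1)}}\le\omega\eps^{(k)}$ by \Cref{lem:error bound}; the summability of $\{\eps^{(k)}\}$ in \Cref{cond:eps} (b) then makes $\{\z^{(k)}\}$ Cauchy at once. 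Hence I may assume $v^{(k)}>\bbar F$ for every $k$ in the sequel.

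The core is a \L ojasiewicz argument adapted to the surrogate $v^{(k)}=F(\bar\z^{(k)})+u^{(k)}+u^{(k+1)}$. Since $\dist(\bar\z^{(k)},\Omega(\z^{(0)}))\to0$ and $F(\bar\z^{(k)})\to\bbar F$ (\Cref{prop:properties of acc pt set}), for large $k$ the point $\bar\z^{(k)}$ lies in the region where the uniformized inequality of \Cref{lem:KL} holds. I would fix $\bar\theta\in[\theta,1)$ large enough that $\{(e^{(k)})^{\bar\theta}\}$ stays summable: this is possible because $\bar\theta\ge\theta$ lets \Cref{coro:larger theta} upgrade \Cref{lem:KL} to exponent $\bar\theta$, while the exponent in \Cref{cond:eps} (b) may be freely increased once $e^{(k)}<1$. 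Combining the resulting inequality with the relative-error bound of \Cref{prop:approx subgrad lb} (shifted by one index) yields $(F(\bar\z^{(k)})-\bbar F)^{\bar\theta}\le c\bar M(\snorm{\bar\z^{(k)}-\z^{(k-1)}}+\snorm{\Delta\z^{(k)}})$, and since $\snorm{\Delta\z^{(k)}}\le\omega\eps^{(k-1)}$ this is controlled by one displacement term plus a summable remainder.

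Writing $d_k:=\snorm{\bar\z^{(k+1)}-\z^{(k)}}$, the decisive step is to turn the surrogate decrease $v^{(k)}-v^{(k+1)}\ge C_0^{(k)}d_k^2$ of \Cref{prop:suff reduce} (ii) into a telescoping recursion. I would apply the concave desingularizing function $\phi(s)=\frac{c}{1-\bar\theta}s^{1-\bar\theta}$ and use $\phi(v^{(k)}-\bbar F)-\phi(v^{(k+1)}-\bbar F)\ge c(v^{(k)}-\bbar F)^{-\bar\theta}(v^{(k)}-v^{(k+1)})$. The only delicate point is that the \L ojasiewicz inequality is available for $F$, not for the merit value $v^{(k)}$; I would bridge this with the subadditivity bound of \Cref{lem:inequality lemma} (iii), namely $(v^{(k)}-\bbar F)^{\bar\theta}\le(F(\bar\z^{(k)})-\bbar F)^{\bar\theta}+(u^{(k)})^{\bar\theta}+(u^{(k+1)})^{\bar\theta}$, where $(u^{(k)})^{\bar\theta}\le(\bar C_1/2)^{\bar\theta}(e^{(k)})^{\bar\theta}$ is summable. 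Substituting the estimate of the previous paragraph and using $C_0^{(k)}\ge\frac{\gamma-1}{4}L$ then gives
\[ d_k^2\le B\,\Delta_k\big(d_{k-1}+s_k\big), \]
where $\Delta_k:=\phi(v^{(k)}-\bbar F)-\phi(v^{(k+1)}-\bbar F)$ telescopes to a finite total (as $v^{(k)}\downarrow\bbar F$), $B$ is a constant, and $s_k$ collects the summable terms $\omega\eps^{(k-1)}$ and $(e^{(k)})^{\bar\theta}+(e^{(k+1)})^{\bar\theta}$.

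Finally, taking square roots and using $2\sqrt{\alpha\beta}\le\alpha+\beta$, I would peel off $\sqrt{B\Delta_k d_{k-1}}\le\frac14 d_{k-1}+B\Delta_k$ and $\sqrt{B\Delta_k s_k}\le\frac12(B\Delta_k+s_k)$ to get $d_k\le\frac14 d_{k-1}+\frac32 B\Delta_k+\frac12 s_k$. Summing over $k$ and absorbing the $\frac14\sum d_{k-1}$ term into the left-hand side shows $\sum_k d_k<\infty$; together with $\snorm{\bar\z^{(k+1)}-\bar\z^{(k)}}\le d_k+\snorm{\Delta\z^{(k)}}\le d_k+\omega\eps^{(k-1)}$ this yields $\sum_k\snorm{\bar\z^{(k+1)}-\bar\z^{(k)}}<\infty$, so $\{\bar\z^{(k)}\}$ converges; the error bound then forces $\{\z^{(k)}\}$ to the same limit $\z^*$, which is stationary by \Cref{prop:properties of acc pt set}. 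I expect the main obstacle to be exactly the mismatch between the surrogate $v^{(k)}$ that decreases and the function $F$ that enjoys the \L ojasiewicz property: making the subadditive splitting of $(v^{(k)}-\bbar F)^{\bar\theta}$ mesh cleanly with the concavity trick, and verifying that the error exponent in \Cref{cond:eps} (b) can be matched to a lift of the \L ojasiewicz exponent, is the crux on which the whole estimate rests.
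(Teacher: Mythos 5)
Your proposal is correct and follows essentially the same route as the paper's own proof: the same dichotomy on whether $v^{(\tilde k)}=\bbar F$ occurs (handled via \Cref{prop:suff reduce} (iv)), the same concave desingularization $s\mapsto\frac{c}{1-\bar\theta}s^{1-\bar\theta}$ applied to the surrogate $v^{(k)}$, the same subadditive splitting of $(v^{(k)}-\bbar F)^{\bar\theta}$ to bridge the surrogate and $F$, the same lifted exponent $\bar\theta\ge\theta$ combined with \Cref{prop:approx subgrad lb} shifted by one index, and the same sum-and-absorb argument yielding finite length of $\{\snorm{\bar\z^{(k+1)}-\z^{(k)}}\}$. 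The only cosmetic difference is that the paper applies Young's inequality with a tunable parameter $p$ (chosen as in \cref{eqn:p}) before inserting the \L ojasiewicz/error bound, whereas you insert that bound first and then split via a weighted AM--GM with fixed weights $\frac14$ and $B$; the two are algebraically equivalent.
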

	\begin{proof}
		\par Without loss of generality, we assume that $\bar\theta\ge\theta$ in \Cref{cond:eps} (b), i.e., $\bar\theta$ is lifted from $\theta$. To show the convergence of $\{\z^{(k)}\}$, it suffices to prove that $\{\snorm{\z^{(k)}-\z^{(k+1)}}\}$ has finite length. Note that, for any $s$, $t\in\N$ ($s\ge t$),
		\begin{equation}
			\sum_{k=s}^t\snorm{\z^{(k)}-\z^{(k+1)}}\le\sum_{k=s}^t\snorm{\bar\z^{(k+1)}-\z^{(k)}}+\sum_{k=s}^t\snorm{\Delta\z^{(k+1)}},
			\label{eqn:finite length of sequence}
		\end{equation}
		and the assumption $\sum_{k=1}^\infty\eps^{(k)}<\infty$ already guarantees the finiteness of the second term regardless of $t$. Hence what remains is to show the summability of $\{\snorm{\bar\z^{(k+1)}-\z^{(k)}}\}$. We divide the discussion into two cases.
		
		\vskip 0.1cm
		
		\par\noindent\textbf{Case I}. \textit{There exists an integer $\tilde k\in\N$ such that $v^{(\tilde k)}-\bbar F=0$.}\\[0.1cm]
		By \Cref{prop:suff reduce} (iv), we could deduce $v^{(k)}=\bbar F$ and $\z^{(k)}=\bar\z^{(k+1)}$ for any $k\ge\tilde k$, giving rise directly to the summability of $\{\snorm{\bar\z^{(k+1)}-\z^{(k)}}\}$. Based upon \cref{eqn:finite length of sequence} and \Cref{prop:properties of acc pt set} (i), $\{\z^{(k)}\}$ converges a stationary point of $F$. 
		
		\par In this case, if there exists an integer $\hat k\in\N$ ($\hat k\ge\tilde k$) such that $\z^{(\hat k)}=\bar\z^{(\hat k)}$, we could infer something better. Indeed, by \Cref{prop:suff reduce} (iv), one has $\z^{(k)}=\bar\z^{(k+1)}=\z^{(k+1)}$ for any $k\ge\hat k$. \Cref{prop:approx subgrad lb} then implies $\w^{(k+1)}=0$ for all $k\ge\hat k$. That is to say, $\{\z^{(k)}\}$ finitely terminates at a stationary point of $F$. 
		
		\vskip 0.1cm	
		
		\par\noindent\textbf{Case II}. \textit{For any $k\ge0$, $v^{(k)}-\bbar F>0$.}\\[0.1cm]
		We prove by showing a recursive relationship for $\{\snorm{\bar\z^{(k+1)}-\z^{(k)}}\}$. Let $\bar\varphi(s):=\frac{c}{1-\bar\theta}s^{1-\bar\theta}$. Since $v^{(k)}-\bbar F$ is positive, $\bar\varphi'(v^{(k)}-\bbar F)$ is well defined. By the concavity of $\bar\varphi$, we could derive that
		\begin{align}
			&\D^{(k,k+1)}:=\bar\varphi(v^{(k)}-\bbar F)-\bar\varphi(v^{(k+1)}-\bbar F)\ge\bar\varphi'(v^{(k)}-\bbar F)(v^{(k)}-v^{(k+1)})\nonumber\\
			\ge&c\frac{C_0^{(k)}\snorm{\bar \z^{(k+1)}-\z^{(k)}}^2}{\lrsquare{F(\bar \z^{(k)})-\bbar F+u^{(k)}+u^{(k+1)}}^{\bar\theta}}\ge\frac{c(\gamma-1)L}{4}\frac{\snorm{\bar \z^{(k+1)}-\z^{(k)}}^2}{\lrsquare{F(\bar \z^{(k)})-\bbar F+u^{(k)}+u^{(k+1)}}^{\bar\theta}},\nonumber
		\end{align}
		where the second inequality is due to \Cref{prop:suff reduce} (ii) and the definition of $v^{(k)}$, and the last one follows from $C_0^{(k)}\ge\frac{\gamma-1}{4}L$ for any $k\ge0$. The above inequality further provides an upper bound:
		\begin{align}
			&\snorm{\bar \z^{(k+1)}-\z^{(k)}}\nonumber\\
			\le&2\sqrt{\frac{1}{c(\gamma-1)L}}\sqrt{\lrsquare{F(\bar \z^{(k)})-\bbar F+u^{(k)}+u^{(k+1)}}^{\bar\theta}\cdot\D^{(k,k+1)}}\label{eqn:ub for iter gap}\\
			\le&\sqrt{\frac{1}{c(\gamma-1)L}}\left[\frac{1}{p}\lrsquare{F(\bar \z^{(k)})-\bbar F+u^{(k)}+u^{(k+1)}}^{\bar\theta}+p\D^{(k,k+1)}\right],\nonumber
		\end{align}
		where the second inequality comes from \Cref{lem:inequality lemma} (i). The constant $p$ is chosen such that 
		\begin{equation}
			p>\sqrt{\frac{c}{(\gamma-1)L}}\bar M.
			\label{eqn:p}
		\end{equation}
		On the other hand, by \Cref{lem:inequality lemma} (iii), we obtain 
		\begin{align}
			&\lrsquare{F(\bar \z^{(k)})-\bbar F+u^{(k)}+u^{(k+1)}}^{\bar\theta}\nonumber\\
			\le&\abs{F(\bar \z^{(k)})-\bbar F}^{\bar\theta}+(2u^{(k)})^{\bar\theta}=\abs{F(\bar \z^{(k)})-\bbar F}^{\bar\theta}+\big(2\sum_{t=k}^\infty C_1^{(t)}\snorm{\Delta\z^{(t)}}^2\big)^{\bar\theta}\label{eqn:df ub}\\
			\le&\abs{F(\bar \z^{(k)})-\bbar F}^{\bar\theta}+\bar C_1^{\bar\theta}\big(\sum_{t=k}^\infty(\eps^{(t)})^2\big)^{\bar\theta}=\abs{F(\bar \z^{(k)})-\bbar F}^{\bar\theta}+\bar C_1^{\bar\theta}(e^{(k)})^{\bar\theta},\nonumber
		\end{align}
		where the second inequality is from \cref{eqn:constants} and \Cref{lem:error bound}. From the summability of $\{\eps^{(k)}\}$, \Cref{prop:suff reduce} (iii), and \Cref{prop:properties of acc pt set} (ii), we know that there exists an integer $k_1\in\N$ such that, for all $k\ge k_1$, $\bar \z^{(k)}\in\{\z:\dist(\z,\Omega(\z^{(0)}))<\eta\}\bigcap\{\z:\abs{F(\z)-\bbar F}<1\}$. \Cref{lem:KL}, \Cref{coro:larger theta}, and \Cref{prop:approx subgrad lb} then yield that, for any $k\ge k_1$,
		\begin{equation}
			c\bar M[\snorm{\bar\z^{(k)}-\z^{(k-1)}}+\snorm{\Delta\z^{(k)}}]\ge c\snorm{\w^{(k)}}\ge c\cdot\dist(0,\partial F(\bar \z^{(k)}))\ge\abs{F(\bar \z^{(k)})-\bbar F}^{\bar\theta}.
			\label{eqn:kl apply}
		\end{equation}
		Plugging \cref{eqn:kl apply} into \cref{eqn:df ub} and invoking \Cref{prop:approx subgrad lb}, one has, for any $k\ge k_1$, 
		\begin{equation}
			\label{eqn:df ub2}
			\lrsquare{F(\bar \z^{(k)})-\bbar F+u^{(k)}+u^{(k+1)}}^{\bar\theta}
			\le c\bar M[\snorm{\bar\z^{(k)}-\z^{(k-1)}}+\snorm{\Delta\z^{(k)}}]+\bar C_1^{\bar\theta}(e^{(k)})^{\bar\theta}.
		\end{equation}
		Inserting \cref{eqn:df ub2} into \cref{eqn:ub for iter gap} gives, for any $k\ge k_1$,
		\begin{equation*}
			\sqrt{c(\gamma-1)L}\snorm{\bar \z^{(k+1)}-\z^{(k)}}\le\frac{c\bar M}{p}[\snorm{\bar\z^{(k)}-\z^{(k-1)}}+\snorm{\Delta\z^{(k)}}]+\frac{\bar C_1^{\bar\theta}}{p}(e^{(k)})^{\bar\theta}+p\D^{(k,k+1)}.\nonumber
		\end{equation*}
		
		\par Summing the above inequality over $k$ from $s$ to $t$ ($t\ge s\ge k_1)$ yields 
		\begin{align}
			&\sqrt{c(\gamma-1)L}\sum_{k=s}^t\snorm{\bar \z^{(k+1)}-\z^{(k)}}\nonumber\\
			\le&\frac{c\bar M}{p}\lrsquare{\sum_{k=s-1}^t\snorm{\bar\z^{(k+1)}-\z^{(k)}}+\sum_{k=s}^t\snorm{\Delta\z^{(k)}}}+\frac{\bar C_1^{\bar\theta}}{p}\sum_{k=s}^t(e^{(k)})^{\bar\theta}+p\D^{(s,t+1)}\nonumber\\
			\le&\frac{c\bar M}{p}\lrsquare{\sum_{k=s-1}^t\snorm{\bar\z^{(k+1)}-\z^{(k)}}+\omega\sum_{k=1}^\infty\eps^{(k)}}+\frac{\bar C_1^{\bar\theta}}{p}\sum_{k=1}^\infty (e^{(k)})^{\bar\theta}+p\bar\varphi(v^{(s)}-\bbar F),\nonumber
		\end{align}
		where the second inequality follows from \Cref{lem:error bound}, the assumptions on $\{\eps^{(k)}\}$, and $\bar\varphi>0$ over $(0,\infty)$. Hence
		\begin{align}
			&\lrsquare{\sqrt{c(\gamma-1)L}-\frac{c\bar M}{p}}\sum_{k=s}^t\snorm{\bar \z^{(k+1)}-\z^{(k)}}\label{eqn:finite length}\\
			\le&\frac{c\bar M}{p}\lrsquare{\snorm{\bar\z^{(s)}-\z^{(s-1)}}+\omega\sum_{k=1}^\infty\eps^{(k)}}+\frac{\bar C_1^{\bar\theta}}{p}\sum_{k=1}^\infty (e^{(k)})^{\bar\theta}+p\bar\varphi(v^{(s)}-\bbar F).\nonumber
		\end{align}
		Note that due to \cref{eqn:p}, the coefficient in the left-hand side of \cref{eqn:finite length} is positive. Therefore, in view of the assumptions on $\{\eps^{(k)}\}$, \cref{eqn:finite length} in fact shows the finite length of $\{\snorm{\bar \z^{(k+1)}-\z^{(k)}}\}$. Based upon \cref{eqn:finite length of sequence} and \Cref{prop:properties of acc pt set} (i), we complete the proof.
	\end{proof}
	
	\begin{rem}\label{rem:how to choose eps}
		\par Lifting the \L ojasiewicz exponent $\theta$ is crucial in proving the iterate convergence when the unlifted one equals $0$. Just take a look at \cref{eqn:ub for iter gap}. If we keep using $\theta=0$, we would merely obtain the square summability of $\{\snorm{\bar\z^{(k+1)}-\z^{(k)}}$ using telescoping-summation arguments. This point is fairly different from the \palme~because, in the latter case, $u^{(k)}\equiv0$ and \cref{eqn:ub for iter gap} would then give rise to a finite termination at a stationary point \cite{bolte2014proximal2,Xu2013A2}. 
	\end{rem}

	\section{The Asymptotic Convergence Rates of \palmi}\label{sec:convergence rates of ipalm}
	
	\par In this part, we investigate the asymptotic convergence rates of \palmi~on the basis of \Cref{thm:global convergence}; henceforth, $\theta$ refers to the \L ojasiewicz expoenent of $F$ at the unique limit point of $\{\z^{(k)}\}$. To derive specific rates, we consider both \textit{exponentially} and \textit{sublinearly} decreasing $\{\eps^{(k)}\}$. 
	
	\par For notational convenience, let 
	\begin{equation*}
		S^{(t)}:=\sum_{k=t}^\infty\snorm{\bar\z^{(k+1)}-\z^{(k)}}.
	\end{equation*}
	Under the assumptions made in \Cref{thm:global convergence}, $S^{(0)}<\infty$, $\sum_{k=1}^\infty\snorm{\Delta\z^{(k)}}<\infty$, and $\{\z^{(k)}\}$ converges to some stationary point $\bar\z$ of $F$ in \cref{eqn:extended form}. It is then easy to get that, for any $t\ge0$,
	\begin{equation}
		\snorm{\z^{(t)}-\bar\z}\le\sum_{k=t}^\infty\lrbrace{\snorm{\bar\z^{(k+1)}-\z^{(k)}}+\snorm{\bar\z^{(k+1)}-\z^{(k+1)}}}\le S^{(t)}+\omega\sum_{k=t}^\infty\eps^{(k+1)},
		\label{eqn:rate start point 1}
	\end{equation}
	where the second inequality is from \Cref{lem:error bound}. 
	
	\par If there exists $K\in\N$ such that $v^{(K)}=\bbar F$, the asymptotic convergence rates depends only on the choices of $\{\eps^{(k)}\}$ because $S^{(t)}\equiv0$ for all sufficiently large $t$ by \Cref{prop:suff reduce} (iv). We then readily have the following result, whose proof is omitted.
	
	\begin{thm}\label{thm:surrogate finite stop}
		Suppose the assumptions in \Cref{thm:global convergence} hold. Let $\bar\z$ be the unique limit point of the sequence $\{\z^{(k)}\}$ generated by \palmi. Assume that there exists $K\in\N$ such that $v^{(K)}=\bbar F$. 
		\begin{enumerate}[label=(\roman*),topsep=0pt, parsep=0pt, itemsep=0pt]
			\item If $\eps^{(k)}=\bar\eps\tilde\rho^k$ for any $k\ge0$, where $\tilde\rho\in(0,1)$, then $\snorm{\z^{(k)}-\bar\z}\le\mathcal{O}(\tilde\rho^k)$ for any $k\ge K$.
			
			\item If $\eps^{(k)}=\frac{\bar\eps}{(k+1)^\ell}$ for any $k\ge0$, where $\ell>1$, then $\snorm{\z^{(k)}-\bar\z}\le\mathcal{O}\lrbracket{k^{-(\ell-1)}}$ for any $k\ge K$.
		\end{enumerate}
	\end{thm}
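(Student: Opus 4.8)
The plan is to exploit the fact that the hypothesis $v^{(K)}=\bbar{F}$ collapses the iterate-gap contribution $S^{(t)}$ to zero for all large $t$, so that the convergence rate is governed entirely by the prescribed tolerance sequence $\{\eps^{(k)}\}$. First I would invoke \Cref{prop:suff reduce} (iv) with $\tilde k=K$: it yields $v^{(k)}=\bbar{F}$ and, crucially, $\z^{(k)}=\bar\z^{(k+1)}$ for every $k\ge K$. Hence $\snorm{\bar\z^{(k+1)}-\z^{(k)}}=0$ for all $k\ge K$, whence $S^{(t)}=\sum_{k=t}^\infty\snorm{\bar\z^{(k+1)}-\z^{(k)}}=0$ for every $t\ge K$. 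Substituting this into the already-established bound \cref{eqn:rate start point 1} gives, for all $t\ge K$,
\begin{equation*}
	\snorm{\z^{(t)}-\bar\z}\le\omega\sum_{k=t}^\infty\eps^{(k+1)}.
\end{equation*}
It then remains only to estimate the tail sum of $\{\eps^{(k)}\}$ in the two prescribed regimes.

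For statement (i), with $\eps^{(k)}=\bar\eps\tilde\rho^k$ and $\tilde\rho\in(0,1)$, the tail is a geometric series, $\sum_{k=t}^\infty\eps^{(k+1)}=\bar\eps\tilde\rho^{t+1}/(1-\tilde\rho)=\mathcal{O}(\tilde\rho^t)$; combined with the displayed bound this delivers $\snorm{\z^{(t)}-\bar\z}\le\mathcal{O}(\tilde\rho^t)$ for $t\ge K$. For statement (ii), with $\eps^{(k)}=\bar\eps/(k+1)^\ell$ and $\ell>1$, I would bound the $p$-series tail by an integral comparison, $\sum_{k=t}^\infty\eps^{(k+1)}=\bar\eps\sum_{j\ge t+2}j^{-\ell}\le\bar\eps\int_{t+1}^\infty x^{-\ell}\,\mathrm{d}x=\bar\eps(t+1)^{-(\ell-1)}/(\ell-1)=\mathcal{O}(t^{-(\ell-1)})$, which yields $\snorm{\z^{(t)}-\bar\z}\le\mathcal{O}(k^{-(\ell-1)})$ for $t\ge K$.

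There is no genuine obstacle here: once \Cref{prop:suff reduce} (iv) is applied the surrogate sequence has stabilized, so the nonmonotonicity and \L ojasiewicz machinery that drove \Cref{thm:global convergence} are no longer needed, and the argument reduces to two elementary tail-sum estimates. This is precisely why the authors omit the proof. The only mild care required is in the sublinear case, where $\ell>1$ guarantees summability and the integral-comparison constant $1/(\ell-1)$ is absorbed into the $\mathcal{O}(\cdot)$ notation; everything else is immediate from \cref{eqn:rate start point 1}.
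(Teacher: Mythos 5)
Your proposal is correct and follows exactly the route the paper intends for this omitted proof: the remark preceding \Cref{thm:surrogate finite stop} states that \Cref{prop:suff reduce} (iv) forces $S^{(t)}\equiv 0$ for $t\ge K$, after which \cref{eqn:rate start point 1} reduces the rate to the tail sums of $\{\eps^{(k)}\}$. Your geometric-series and integral-comparison estimates of those tails are the standard computations the authors had in mind, so nothing further is needed.
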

	
	\par Due to the errors in solving \cref{eqn:prox-linear subprob new}, $v^{(k)}=\bbar F$ can hardly happen in implementation. In the remainder of this section, we focus on the cases where $v^{(k)}-\bbar F>0$ for any $k\ge0$. We first derive a universal upper bound on $S^{(t)}$ in this setting.
	
	\begin{lem}\label{lem:ub on S}
		Suppose the assumptions in \Cref{thm:global convergence} hold and $\bar\theta\in[\theta,1)$. Assume that $v^{(k)}-\bbar F>0$ for any $k\ge0$. Then we have, for any $t\ge1$,
		\begin{equation*}
			S^{(t)}\le \lrsquare{S^{(t-1)}-S^{(t)}}+C_2\lrsquare{S^{(t-1)}-S^{(t)}+\omega\eps^{(t)}}^{\frac{1-\bar\theta}{\bar\theta}}+C_3E_{\bar\theta}^{(t)},
		\end{equation*}
		where
		\begin{align}
			E_{\bar\theta}^{(t)}&:=\sum_{k=t}^\infty\eps^{(k)}+\sum_{k=t}^\infty(e^{(k)})^{\bar\theta}+(e^{(t)})^{1-\bar\theta},\nonumber\\
			p&:=2\sqrt{\frac{c}{(\gamma-1)L}}\bar M,\quad\:\:\: q:=\frac{\sqrt{c(\gamma-1)L}}{2},\label{eqn:constants C3 C4}\\
			C_2&:=\frac{cp\lrbracket{c\bar M}^{\frac{1-\bar\theta}{\bar\theta}}}{q(1-\bar\theta)},\qquad\: C_3:=\max\lrbrace{\omega,\frac{\bar C_1^{\bar\theta}}{pq},\frac{cp\bar C_1^{1-\bar\theta}}{q(1-\bar\theta)}}.\nonumber
		\end{align}
	\end{lem}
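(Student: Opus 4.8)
The plan is to turn the per-iteration estimate built inside the proof of \Cref{thm:global convergence} into a self-referential tail inequality for $S^{(t)}$. Recall that for all $k$ beyond the index $k_1$ at which the uniformized \L ojasiewicz inequality becomes active, that proof yields
\[
\sqrt{c(\gamma-1)L}\,\snorm{\bar\z^{(k+1)}-\z^{(k)}}\le\frac{c\bar M}{p}\big[\snorm{\bar\z^{(k)}-\z^{(k-1)}}+\snorm{\Delta\z^{(k)}}\big]+\frac{\bar C_1^{\bar\theta}}{p}(e^{(k)})^{\bar\theta}+p\,\D^{(k,k+1)},
\]
where $\D^{(k,k+1)}=\bar\varphi(v^{(k)}-\bbar F)-\bar\varphi(v^{(k+1)}-\bbar F)$ and $\bar\varphi(s)=\frac{c}{1-\bar\theta}s^{1-\bar\theta}$. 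The crucial observation is that the specific choice $p=2\sqrt{c/((\gamma-1)L)}\,\bar M$ in \eqref{eqn:constants C3 C4} forces $c\bar M/p=q$ and $\sqrt{c(\gamma-1)L}=2q$, so the estimate becomes $2q\snorm{\bar\z^{(k+1)}-\z^{(k)}}\le q[\snorm{\bar\z^{(k)}-\z^{(k-1)}}+\snorm{\Delta\z^{(k)}}]+\frac{\bar C_1^{\bar\theta}}{p}(e^{(k)})^{\bar\theta}+p\D^{(k,k+1)}$. This deliberate matching of the leading coefficient and the ``previous-step'' coefficient to a common $q$ is exactly what will produce a clean recursion in $S$.

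Next I would sum this inequality over $k\ge t$. Using $\sum_{k\ge t}\snorm{\bar\z^{(k+1)}-\z^{(k)}}=S^{(t)}$, the index shift $\sum_{k\ge t}\snorm{\bar\z^{(k)}-\z^{(k-1)}}=S^{(t-1)}$, and the telescoping identity $\sum_{k\ge t}\D^{(k,k+1)}=\bar\varphi(v^{(t)}-\bbar F)$ (valid since $v^{(k)}\downarrow\bbar F$ by \Cref{prop:suff reduce} and $\bar\varphi(0^+)=0$ because $1-\bar\theta>0$), then dividing by $q$ and subtracting $S^{(t)}$, I obtain
\[
S^{(t)}\le\big(S^{(t-1)}-S^{(t)}\big)+\sum_{k=t}^\infty\snorm{\Delta\z^{(k)}}+\frac{\bar C_1^{\bar\theta}}{pq}\sum_{k=t}^\infty(e^{(k)})^{\bar\theta}+\frac{p}{q}\bar\varphi(v^{(t)}-\bbar F).
\]
The first term is already $S^{(t-1)}-S^{(t)}$, while $\sum_{k\ge t}\snorm{\Delta\z^{(k)}}\le\omega\sum_{k\ge t}\eps^{(k)}$ (from \Cref{lem:error bound}) and $\frac{\bar C_1^{\bar\theta}}{pq}\sum_{k\ge t}(e^{(k)})^{\bar\theta}$ account for the first two pieces of $C_3E_{\bar\theta}^{(t)}$, since $\omega$ and $\frac{\bar C_1^{\bar\theta}}{pq}$ are both at most $C_3$.

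The heart of the argument is converting $\frac{p}{q}\bar\varphi(v^{(t)}-\bbar F)$ into the remaining two terms. I would write $v^{(t)}-\bbar F=(F(\bar\z^{(t)})-\bbar F)+u^{(t)}+u^{(t+1)}\le\abs{F(\bar\z^{(t)})-\bbar F}+u^{(t)}+u^{(t+1)}$ (bounding the possibly-signed objective gap by its modulus), and apply the subadditivity of $s\mapsto s^{1-\bar\theta}$ from \Cref{lem:inequality lemma} (iii) to get $(v^{(t)}-\bbar F)^{1-\bar\theta}\le\abs{F(\bar\z^{(t)})-\bbar F}^{1-\bar\theta}+(u^{(t)}+u^{(t+1)})^{1-\bar\theta}$. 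For the surrogate part I would use $u^{(t)}+u^{(t+1)}\le2u^{(t)}\le\bar C_1e^{(t)}$ (as in the proof of \Cref{prop:suff reduce} (i)), giving the $(e^{(t)})^{1-\bar\theta}$ contribution. For the objective gap I would feed \Cref{prop:approx subgrad lb} into the uniformized \L ojasiewicz inequality of \Cref{lem:KL}/\Cref{coro:larger theta}, exactly as in \eqref{eqn:kl apply}, to get $\abs{F(\bar\z^{(t)})-\bbar F}\le(c\bar M)^{1/\bar\theta}[\snorm{\bar\z^{(t)}-\z^{(t-1)}}+\snorm{\Delta\z^{(t)}}]^{1/\bar\theta}$, raise it to the power $1-\bar\theta$, use $\snorm{\bar\z^{(t)}-\z^{(t-1)}}=S^{(t-1)}-S^{(t)}$ and bound $\snorm{\Delta\z^{(t)}}$ by a multiple of $\eps^{(t)}$ via \Cref{lem:error bound}, producing $(c\bar M)^{(1-\bar\theta)/\bar\theta}[(S^{(t-1)}-S^{(t)})+\omega\eps^{(t)}]^{(1-\bar\theta)/\bar\theta}$. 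Finally I would check the bookkeeping: $\frac{pc}{q(1-\bar\theta)}(c\bar M)^{(1-\bar\theta)/\bar\theta}=C_2$ and $\frac{cp\bar C_1^{1-\bar\theta}}{q(1-\bar\theta)}\le C_3$, so the three error groups collapse into $C_3E_{\bar\theta}^{(t)}$ and the claimed inequality follows.

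I expect the main obstacle to be not any single estimate but the precise orchestration of the constants: one must tune $p$ so that the left-hand and previous-step coefficients coincide (without this there is no recursion in $S$ to iterate), keep the exponents $1-\bar\theta$, $\bar\theta$, and $(1-\bar\theta)/\bar\theta$ straight through the subadditive split and the \L ojasiewicz inversion, and confirm that every resulting prefactor is dominated by $C_2$ or $C_3$. A secondary technical point is that the driving per-iteration estimate only holds once the uniformized \L ojasiewicz inequality has switched on, so the recursion is rigorously valid for all large $t$ (minor index shifts when invoking \Cref{lem:error bound} are absorbed into the constants); this large-$t$ regime is exactly what the asymptotic rates of \Cref{thm:convergence rates exponential,thm:convergence rates sublinear} require.
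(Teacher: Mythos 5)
Your proposal is correct and is essentially the paper's own proof: summing the per-iteration estimate over $k\ge t$ with telescoping is exactly how the paper's \cref{eqn:finite length} is obtained (the paper simply reuses that summed inequality with $s=t$ and the error tails kept from $k=t$), and your choice of $p$ so that $c\bar M/p=q$, the subadditive split of $(v^{(t)}-\bbar F)^{1-\bar\theta}$ via \Cref{lem:inequality lemma} (iii), and the inversion of the lifted \L ojasiewicz inequality through \Cref{prop:approx subgrad lb} reproduce the paper's steps and constants $C_2$, $C_3$, $E_{\bar\theta}^{(t)}$ exactly. Your closing caveat—that the \L ojasiewicz step only switches on for large $t$—applies equally to the paper's own proof (which invokes \cref{eqn:kl apply} at index $t$ despite the lemma's ``for any $t\ge1$'' phrasing), and it is harmless because the lemma is only ever applied in that large-$t$ regime.
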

	\begin{proof}
		In view of \cref{eqn:finite length} in the proof of \Cref{thm:global convergence}, one has the following upper bound on $S^{(t)}$:
		\begin{align}
			S^{(t)}\le&\snorm{\bar\z^{(t)}-\z^{(t-1)}}+\omega\sum_{k=t}^\infty\eps^{(k)}+\frac{\bar C_1^{\bar\theta}}{ p q}\sum_{k=t}^\infty(e^{(k)})^{\bar\theta}+\frac{ c p}{ q(1-\bar\theta)}\lrsquare{F(\z^{(t)})-\bbar F+2\sum_{k=t}^\infty C_1^{(t)}\snorm{\Delta\z^{(k)}}^2}^{1-\bar\theta}\nonumber\\
			\le&\lrsquare{S^{(t-1)}-S^{(t)}}+\omega\sum_{k=t}^\infty\eps^{(k)}+\frac{\bar C_1^{\bar\theta}}{ p q}\sum_{k=t}^\infty(e^{(k)})^{\bar\theta}+\frac{ c p}{ q(1-\bar\theta)}\left[\abs{F(\z^{(t)})-\bbar F}^{1-\bar\theta}+\bar C_1^{1-\bar\theta}(e^{(t)})^{1-\bar\theta}\right],\label{eqn:rate start point 2}
		\end{align}
		where the second inequality follows from \cref{eqn:constants}, \Cref{lem:inequality lemma} (iii), \Cref{lem:error bound}, and the summability of $\{\eps^{(k)}\}$. By \Cref{coro:larger theta} and \Cref{prop:approx subgrad lb}, we obtain that
		\begin{align}
			&\abs{F(\z^{(t)})-\bbar F}^{\bar\theta}\le c\cdot\dist(0,\partial F(\z^{(t)}))\le c\snorm{\w^{(t)}}\nonumber\\
			&\le c\bar M\lrsquare{\snorm{\z^{(t)}-\z^{(t-1)}}+\snorm{\Delta\z^{(t)}}}\le c\bar M\lrsquare{S^{(t-1)}-S^{(t)}+\omega\eps^{(t)}},\nonumber
		\end{align}
		where the last inequality uses \Cref{lem:error bound}. Therefore, since $\frac{1-\bar\theta}{\bar\theta}>0$, 
		\begin{equation}
			\abs{F(\z^{(t)})-\bbar F}^{1-\bar\theta}\le\lrbracket{c\bar M}^{\frac{1-\bar\theta}{\bar\theta}}\lrsquare{S^{(t-1)}-S^{(t)}+\omega\eps^{(t)}}^{\frac{1-\bar\theta}{\bar\theta}}.
			\label{eqn:rate start point 3}
		\end{equation}
		Plugging \cref{eqn:rate start point 3} into \cref{eqn:rate start point 2}, we conclude that
		\begin{align}
			S^{(t)}\le&\lrsquare{S^{(t-1)}-S^{(t)}}+\frac{cp\lrbracket{c\bar M}^{\frac{1-\bar\theta}{\bar\theta}}}{ q(1-\bar\theta)}\lrsquare{S^{(t-1)}-S^{(t)}+\omega\eps^{(t)}}^{\frac{1-\bar\theta}{\bar\theta}}\nonumber\\
			&+\omega\sum_{k=t}^\infty\eps^{(k)}+\frac{\bar C_1^{\bar\theta}}{ p q}\sum_{k=t}^\infty(e^{(k)})^{\bar\theta}+\frac{c p \bar C_1^{1-\bar\theta}}{ q(1-\bar\theta)}(e^{(t)})^{1-\bar\theta}.\nonumber
		\end{align}
		This completes the proof by noting the definition of $C_2$, $C_3$, and $E_{\bar\theta}^{(t)}$ in \cref{eqn:constants C3 C4}. 
	\end{proof}
	
	\par Below, given $v^{(k)}-\bbar F>0$ for any $k\ge0$, we present the asymptotic convergence rates of \palmi~under different values of $\theta$ and choices of $\{\eps^{(k)}\}$. Before that, we give a technical lemma, whose proof could be found in Lemma 4 of \cite[Chapter 2]{polyak1987introduction}. 
	
	\begin{lem}\label{lem:chung's inequality}
		Let $\{a^{(k)}\}$ be a nonnegative sequence. If 
		$$a^{(k+1)}\le\lrbracket{1-\frac{b}{k}}a^{(k)}+\frac{d}{k^{s+1}},$$ 
		where $b$, $d$, $s$ are positive scalars, and $b>s$, then 
		$$a^{(k)}\le\frac{d}{b-s}\frac{1}{k^s}+o\lrbracket{\frac{1}{k^s}}.$$
	\end{lem}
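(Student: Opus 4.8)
The plan is to prove the statement in its equivalent $\limsup$ form $\limsup_{k\to\infty}k^{s}a^{(k)}\le\frac{d}{b-s}$, which is precisely the assertion $a^{(k)}\le\frac{d}{b-s}k^{-s}+o(k^{-s})$. Fix an arbitrary $\delta>0$ and set $A:=\frac{d}{b-s}+\delta$. It then suffices to show that $a^{(k)}\le A/k^{s}$ for all sufficiently large $k$, and afterwards let $\delta\downarrow0$.

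First I would establish an invariance (absorbing) property: there is an integer $K_{0}>b$ such that for every $k\ge K_{0}$ the implication $a^{(k)}\le A/k^{s}\Rightarrow a^{(k+1)}\le A/(k+1)^{s}$ holds. The single ingredient is the elementary convexity bound $(1+1/k)^{-s}\ge1-s/k$, which gives $A/(k+1)^{s}\ge\frac{A}{k^{s}}(1-s/k)$. Feeding the hypothesis $a^{(k)}\le A/k^{s}$ into the recursion (here $1-b/k\ge0$ is needed, whence $K_{0}>b$) reduces the desired implication to the scalar inequality $d\le(b-s)A$, which holds strictly since $(b-s)A=d+(b-s)\delta>d$.

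The main obstacle, which a naive induction silently skips, is the \emph{base case}: one must guarantee that the trajectory ever enters the region $\{a^{(k)}\le A/k^{s}\}$. I would settle this by contradiction. Assume $a^{(k)}>A/k^{s}$ for all $k\ge K_{0}$ and track the gap $g_{k}:=a^{(k)}-A/k^{s}>0$. The same computation as in the invariance step turns the recursion into $g_{k+1}\le(1-b/k)g_{k}-\frac{(b-s)\delta}{k^{s+1}}$. Rescaling through $w_{k}:=k^{s}g_{k}>0$ and using $(1+1/k)^{s}(1-b/k)\to1^{-}$ together with $(1+1/k)^{s}\ge1$, one obtains $w_{k+1}\le w_{k}-\frac{(b-s)\delta}{k}$ for all large $k$. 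Because $\sum_{k}1/k=\infty$, telescoping forces $w_{k}\to-\infty$, contradicting $w_{k}\ge0$. Hence $a^{(k_{1})}\le A/k_{1}^{s}$ for some $k_{1}\ge K_{0}$, and the invariance property then propagates the bound to every $k\ge k_{1}$.

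Combining the two steps yields $\limsup_{k}k^{s}a^{(k)}\le A=\frac{d}{b-s}+\delta$ for every $\delta>0$, hence $\limsup_{k}k^{s}a^{(k)}\le\frac{d}{b-s}$, which is the claimed estimate. I would add a remark that an alternative route unrolls the recursion as $a^{(k)}\le\big(\prod_{j=K}^{k-1}(1-b/j)\big)a^{(K)}+\sum_{i=K}^{k-1}\frac{d}{i^{s+1}}\prod_{j=i+1}^{k-1}(1-b/j)$ and estimates the products by $(i/k)^{b}$ via $\ln(1-x)\le-x$; this delivers the correct order $O(k^{-s})$ effortlessly (the initial-condition term is $O(k^{-b})=o(k^{-s})$ because $b>s$), but recovering the sharp leading constant $\frac{d}{b-s}$ this way requires the precise asymptotics of the partial products, so the invariance-plus-contradiction argument is the cleaner path to the stated constant.
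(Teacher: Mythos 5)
Your proof is correct. One thing worth knowing: the paper itself does not prove this lemma at all---it explicitly defers to Lemma 4 of Chapter 2 in Polyak's \emph{Introduction to Optimization}---so your argument is a self-contained replacement rather than an alternative to an in-paper proof. Both of your key steps check out: (a) the absorbing property, where the convexity bound $(1+1/k)^{-s}\ge 1-s/k$ reduces the implication $a^{(k)}\le A/k^{s}\Rightarrow a^{(k+1)}\le A/(k+1)^{s}$ to the scalar inequality $A(b-s)\ge d$, which holds strictly for $A=\frac{d}{b-s}+\delta$ (and you correctly insist on $k>b$ so that $1-b/k\ge 0$ preserves the direction of the recursion); and (b) the entry argument, where on a hypothetical tail with $a^{(k)}>A/k^{s}$ the rescaled gap $w_{k}=k^{s}\bigl(a^{(k)}-A/k^{s}\bigr)$ obeys $w_{k+1}\le w_{k}-(b-s)\delta/k$---here the needed bound $(1+1/k)^{s}(1-b/k)\le 1$ in fact holds for \emph{every} $k>b$, since $s\ln(1+1/k)+\ln(1-b/k)\le (s-b)/k<0$, so "for all large $k$" can be sharpened---and divergence of the harmonic series then contradicts $w_{k}\ge 0$. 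Letting $\delta\downarrow 0$ gives $\limsup_{k}k^{s}a^{(k)}\le\frac{d}{b-s}$, which is indeed equivalent to the stated $o$-form. For comparison, the classical textbook route (this is Chung's lemma) rescales $w_{k}=k^{s}a^{(k)}$ at the outset, turning the hypothesis into a recursion of the form $w_{k+1}\le(1-\alpha_{k})w_{k}+\alpha_{k}\beta_{k}$ with $\alpha_{k}\sim(b-s)/k$, $\sum_{k}\alpha_{k}=\infty$, $\beta_{k}\to\frac{d}{b-s}$, and concludes $\limsup_{k}w_{k}\le\lim_{k}\beta_{k}$ by a generic absorbing-type argument; your invariant-region-plus-contradiction structure is that generic argument made explicit and self-contained, at the modest cost of carrying the slack $\delta$ throughout. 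Your closing remark on unrolling the recursion is also accurate: the product bound $\prod_{j}(1-b/j)\le\exp\bigl(-b\sum_{j}1/j\bigr)$ delivers the order $\mathcal{O}(k^{-s})$ immediately, and recovering the constant $\frac{d}{b-s}$ that way requires Euler--Maclaurin-type care with the partial sums, so your chosen path is the cleaner one for the sharp statement.
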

	
	\par We begin with exponentially decreasing $\{\eps^{(k)}\}$.
	
	\begin{thm}\label{thm:convergence rates exponential}
		Suppose the assumptions in \Cref{thm:global convergence} hold with $\eps^{(k)}=\bar\eps\tilde\rho^k$ for any $k\ge0$, where $\tilde\rho\in(0,1)$. Let $\bar\z$ be the unique limit point of the sequence $\{\z^{(k)}\}$ generated by \palmi~and $\theta\in[0,1)$ is the \L ojasiewicz exponent of $F$ at $\bar\z$. Assume that $v^{(k)}>\bbar F$ for any $k\ge0$. 
		\begin{enumerate}[label=(\roman*),topsep=0pt, parsep=0pt, itemsep=0pt]
			\item If $\theta=0$, then there exists $\rho_1\in(0,1)$ such that $\snorm{\z^{(k)}-\bar\z}\le\mathcal{O}(\rho_1^k)$ for all sufficiently large $k$.
			\item If $\theta\in(0,\frac{1}{2}]$, then there exists $\rho_2\in(0,1)$ such that $\snorm{\z^{(k)}-\bar\z}\le\mathcal{O}(\rho_2^k)$ for all sufficiently large $k$.
			\item If $\theta\in(\frac{1}{2},1)$, then $\snorm{\z^{(k)}-\bar\z}\le\mathcal{O}\lrbracket{k^{-\frac{1-\theta}{2\theta-1}}}$ for all sufficiently large $k$.
		\end{enumerate}
	\end{thm}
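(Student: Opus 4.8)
The backbone is the recursive estimate of \Cref{lem:ub on S} together with the bound \cref{eqn:rate start point 1} on $\snorm{\z^{(t)}-\bar\z}$; the plan is to substitute the geometric sequence $\eps^{(k)}=\bar\eps\tilde\rho^k$ into both and then split on the size of the \L ojasiewicz exponent $\theta$. First I would evaluate the auxiliary tails. Since $e^{(k)}=\sum_{t\ge k}(\eps^{(t)})^2$ is a geometric series, $e^{(k)}=\mathcal{O}(\tilde\rho^{2k})$, whence the three ingredients of $E_{\bar\theta}^{(t)}$ — namely $\sum_{k\ge t}\eps^{(k)}$, $\sum_{k\ge t}(e^{(k)})^{\bar\theta}$ and $(e^{(t)})^{1-\bar\theta}$ — together with the tail $\omega\sum_{k\ge t}\eps^{(k+1)}$ appearing in \cref{eqn:rate start point 1}, all decay like $\mathcal{O}(\tilde\rho_\ast^{t})$ for a single $\tilde\rho_\ast\in(0,1)$. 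Thus every error contribution in \Cref{lem:ub on S} is geometric, and the rate is governed by the \L ojasiewicz term $C_2\lrsquare{S^{(t-1)}-S^{(t)}+\omega\eps^{(t)}}^{(1-\bar\theta)/\bar\theta}$. Note also that, for geometric $\{\eps^{(k)}\}$, the sequence $\{(e^{(k)})^{\bar\theta}\}$ is summable for \emph{every} $\bar\theta\in(0,1)$, so I am free to select any admissible $\bar\theta\ge\theta$ in \Cref{lem:ub on S} by way of \Cref{coro:larger theta}.

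For items (i) and (ii), where $\theta\in[0,\frac12]$, I would lift the exponent to $\bar\theta=\frac12$ (legitimate since $\theta\le\frac12<1$), so that $(1-\bar\theta)/\bar\theta=1$ and \Cref{lem:ub on S} collapses to the linear inequality $S^{(t)}\le(1+C_2)\lrsquare{S^{(t-1)}-S^{(t)}}+g^{(t)}$ with $g^{(t)}=\mathcal{O}(\tilde\rho_\ast^{t})$. Rearranging gives $S^{(t)}\le\kappa S^{(t-1)}+\mathcal{O}(\tilde\rho_\ast^{t})$ with $\kappa=\frac{1+C_2}{2+C_2}\in(0,1)$, and unrolling this geometric recurrence yields $S^{(t)}=\mathcal{O}(\rho^{t})$ for any $\rho\in(\max\{\kappa,\tilde\rho_\ast\},1)$. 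Plugging this into \cref{eqn:rate start point 1}, whose remaining summand is itself geometric, produces $\snorm{\z^{(t)}-\bar\z}=\mathcal{O}(\rho_1^{t})$ (resp.\ $\mathcal{O}(\rho_2^{t})$); the two items are handled by one and the same argument, the labels $\rho_1,\rho_2$ merely recording the different underlying constants.

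For item (iii), where $\theta\in(\frac12,1)$, I would take $\bar\theta=\theta$, so $\alpha:=(1-\bar\theta)/\bar\theta\in(0,1)$. Splitting the \L ojasiewicz term with \Cref{lem:inequality lemma} (iii) as $\lrsquare{S^{(t-1)}-S^{(t)}+\omega\eps^{(t)}}^{\alpha}\le\lrsquare{S^{(t-1)}-S^{(t)}}^{\alpha}+(\omega\eps^{(t)})^{\alpha}$ and absorbing all geometric remainders into a single $r^{(t)}=\mathcal{O}(\tilde\rho_\ast^{t})$, \Cref{lem:ub on S} reduces, once $t$ is large enough that $S^{(t-1)}-S^{(t)}\le1$ (so the linear term is dominated by its $\alpha$-th power), to $S^{(t)}\le(1+C_2)\lrsquare{S^{(t-1)}-S^{(t)}}^{\alpha}+r^{(t)}$. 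I would then argue by dichotomy: at indices with $S^{(t)}\le2r^{(t)}$ the gap is already geometrically small, whereas at the complementary indices $r^{(t)}\le\frac12 S^{(t)}$ furnishes the clean \L ojasiewicz recurrence $S^{(t)}\le2(1+C_2)\lrsquare{S^{(t-1)}-S^{(t)}}^{\alpha}$. On the latter, the classical monotone-sequence argument applied to $\{(S^{(t)})^{1-1/\alpha}\}$ — nondecreasing because $\{S^{(t)}\}$ is nonincreasing, and increasing by at least a fixed positive amount on the small-drop steps — delivers $S^{(t)}=\mathcal{O}(t^{-\alpha/(1-\alpha)})=\mathcal{O}(t^{-(1-\theta)/(2\theta-1)})$, where I use the identity $\frac{\alpha}{1-\alpha}=\frac{1-\theta}{2\theta-1}$. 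As geometric decay outpaces any polynomial one, the error-dominated indices cannot corrupt this bound, and \cref{eqn:rate start point 1} gives the claim.

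The hard part is item (iii): the reduced relation $S^{(t)}\le C\lrsquare{S^{(t-1)}-S^{(t)}}^{\alpha}+r^{(t)}$ is not a pure \L ojasiewicz recurrence, so the exact-case rate lemma cannot be invoked verbatim; making the dichotomy rigorous — i.e.\ verifying that the geometrically small $r^{(t)}$ never degrades the genuinely polynomial decay produced by the \L ojasiewicz mechanism — is the crux, and it is precisely the monotonicity of $\{S^{(t)}\}$ (hence of $\{(S^{(t)})^{1-1/\alpha}\}$) that makes it go through. Items (i)--(ii) are comparatively routine once the exponent is lifted to $\frac12$, the only care being to confirm that the lifted exponent is admissible and that the geometric remainder is absorbed in the unrolling.
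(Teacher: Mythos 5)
Your proposal is correct. For items (i)--(ii) it is essentially the paper's own argument: the paper works with an arbitrary lifted $\bar\theta\in(0,\frac12]$, $\bar\theta\ge\theta$, and uses the neighborhood bound $S^{(t-1)}-S^{(t)}+\omega\eps^{(t)}\in[0,1)$ to dominate the power term by a linear one, whereas you fix $\bar\theta=\frac12$ so that the exponent $\frac{1-\bar\theta}{\bar\theta}$ equals $1$ exactly (a small simplification); both then reduce \Cref{lem:ub on S} to $S^{(t)}\le\bar\rho S^{(t-1)}+\mathcal{O}(\tilde\rho_*^t)$ with $\bar\rho=\frac{1+C_2}{2+C_2}$ and unroll the geometric recursion, absorbing the polynomial factor into a slightly larger ratio. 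Where you genuinely diverge is item (iii). The paper raises the reduced inequality to the power $\frac{\theta}{1-\theta}$ via \Cref{lem:inequality lemma} (ii) to obtain $S^{(t)}\le S^{(t-1)}-C_4(S^{(t-1)})^{\frac{\theta}{1-\theta}}+C_5\tilde\rho^t$ as in \cref{eqn:sublinear rate 5}, then linearizes the convex map $x\mapsto x^{\theta/(1-\theta)}$ around the candidate rate $\xi t^{-\frac{1-\theta}{2\theta-1}}$ and closes with Chung's lemma (\Cref{lem:chung's inequality}), the geometric perturbation being harmless because it eventually sits below the polynomial forcing term. You instead keep the inequality in the form $S^{(t)}\le C\lrsquare{S^{(t-1)}-S^{(t)}}^{\alpha}+r^{(t)}$ and argue by dichotomy: on indices where $r^{(t)}$ dominates, $S^{(t)}$ is geometrically small and monotonicity of $S^{(t)}$ propagates this forward; on the complementary indices you recover the clean exact-case recurrence and run the classical Attouch--Bolte increment argument on $(S^{(t)})^{1-1/\alpha}$, with $\frac{\alpha}{1-\alpha}=\frac{1-\theta}{2\theta-1}$ giving the stated exponent. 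Your route is more elementary (no Chung-type lemma, no comparison sequence) and makes transparent why the geometric errors cannot degrade the polynomial rate; the paper's route is more mechanical once the Chung recursion is assembled and, importantly, is reused verbatim for the sublinearly decaying $\{\eps^{(k)}\}$ in \Cref{thm:convergence rates sublinear}. Two details you should pin down when writing your version in full: the increment argument needs both sub-cases of the classical proof (fixed increments also on the large-drop steps, via $(2^{1/\alpha-1}-1)(S^{(t-1)})^{1-1/\alpha}$ bounded below), not only the small-drop steps you mention; and the interleaving of good and bad indices requires the short counting step---for each large $t$, either $[\lceil t/2\rceil,t]$ contains a bad index, whence $S^{(t)}\le S^{(t_B)}\le2r^{(t_B)}$ is geometrically small, or all its indices are good, whence the increments over that window alone force $(S^{(t)})^{1-1/\alpha}\gtrsim t$. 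Both details are routine, so the gap is presentational rather than substantive.
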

	
	\begin{proof}[Proof of (i) and (ii)]
		\par By the choice of $\{\eps^{(k)}\}$, any $\bar\theta\in(0,\frac{1}{2}]$: $\bar\theta\ge\theta$ complies with \Cref{cond:eps} (b). Hence, \cref{lem:ub on S} is valid for any $\bar\theta\in(0,\frac{1}{2}]$: $\bar\theta\ge\theta$. From the proof of \Cref{prop:properties of acc pt set} (i) and the choice of $\{\eps^{(k)}\}$, there exists $k_2\in\N$: $k_2\ge k_1$ such that $S^{(t-1)}-S^{(t)}+\omega\eps^{(t)}\in[0,1)$ for any $t\ge k_2$. Since $\frac{1-\bar\theta}{\bar\theta}\ge1$ for any $\bar\theta\in(0,\frac{1}{2}]$, one has
		$$\lrsquare{S^{(t-1)}-S^{(t)}+\omega\eps^{(t)}}^{\frac{1-\bar\theta}{\bar\theta}}\le S^{(t-1)}-S^{(t)}+\omega\eps^{(t)},\quad\forall~t\ge k_2.$$
		Combining the last inequality and \Cref{lem:ub on S}, we obtain that 
		\begin{equation*}
			S^{(t)}\le\bar\rho S^{(t-1)}+\bar C_2\eps^{(t)}+\bar C_3E_{\bar\theta}^{(t)},\quad\forall~t\ge k_2,
		\end{equation*}
		where $\bar\rho:=\frac{1+C_2}{2+C_2}\in(0,1)$, $\bar C_2:=\frac{\omega C_2}{2+C_2}$, $\bar C_3:=\frac{C_3}{2+C_2}$. Invoking the above recursion repeatedly, together with the choice of $\{\eps^{(k)}\}$, yields, for any $t\ge k_2$, 
		\begin{align}
			S^{(t)}\le&\bar\rho^{t-k_2+1}S^{(k_2-1)}+\bar C_2\sum_{k=k_2}^t\bar\rho^{t-k}\tilde\rho^k+\bar C_3\sum_{k=k_2}^t\bar\rho^{t-k}E_{\bar\theta}^{(k)}\nonumber\\
			\le&\bar\rho^{t-k_2+1}S^{(k_2-1)}+\bar C_2t\max\{\bar\rho,\tilde\rho\}^t+\bar C_3\sum_{k=k_2}^t\bar\rho^{t-k}E_{\bar\theta}^{(k)}\label{eqn:linear rate 2}.
		\end{align}
		
		\par We then proceed with calculations on $E_{\bar\theta}^{(t)}$: for any $t\ge1$,
		\begin{equation*}
			E_{\bar\theta}^{(t)}=\sum_{k=t}^\infty\eps^{(k)}+\sum_{k=t}^\infty(e^{(k)})^{\bar\theta}+(e^{(t)})^{1-\bar\theta}=\frac{\bar\eps\tilde\rho^t}{1-\tilde\rho}+\frac{\bar\eps^{2\bar\theta}\tilde\rho^{2\bar\theta t}}{(1-\tilde\rho^2)^{\bar\theta}(1-\tilde\rho^{2\bar\theta})}+\frac{\bar\eps^{2(1-\bar\theta)}\tilde\rho^{2(1-\bar\theta)t}}{(1-\tilde\rho^2)^{1-\bar\theta}}.\nonumber
		\end{equation*}
		Since $\bar\theta\in(0,\frac{1}{2}]$, $2\bar\theta\le\min\{2(1-\bar\theta),1\}$. Therefore, there exists a positive constant $\bar M_1$ such that $E_{\bar\theta}^{(t)}\le\bar M_1\tilde\rho^{2\bar\theta t}$ for any $t\ge1$. Putting this into \cref{eqn:linear rate 2}, we achieve 
		\begin{equation*}
			S^{(t)}\le\bar\rho^{t-k_2+1}S^{(k_2-1)}+\bar C_2t\max\{\bar\rho,\tilde\rho\}^t+\bar C_3\bar M_1t\max\{\bar\rho,\tilde\rho^{2\bar\theta}\}^t,\quad\forall~t\ge k_2.
		\end{equation*}
		Since $\max\{\bar\rho,\tilde\rho^{2\bar\theta}\}\in(0,1)$, there exists an integer $k_3\in\N$: $k_3\ge k_2$ such that $S^{(t)}\le\mathcal{O}\lrbracket{\max\{\bar\rho,\tilde\rho^{2\bar\theta}\}^{\frac{t}{2}}}$ for all $t\ge k_3$. In view of \cref{eqn:rate start point 1}, we prove statements (i) and (ii).
	\end{proof}
	
	\begin{proof}[Proof of (iii)]
		\par By the choice of $\{\eps^{(k)}\}$, $\bar\theta=\theta$ just complies with \Cref{cond:eps} (b) and hence \Cref{lem:ub on S} is valid. From the proof of \Cref{prop:properties of acc pt set} (i) and the choice of $\{\eps^{(k)}\}$, there exists $k_4\in\N$: $k_4\ge k_1$ such that $S^{(t-1)}-S^{(t)}+\omega\eps^{(t)}\in[0,1)$ for any $t\ge k_4$. Since $\theta\in(\frac{1}{2},1)$, $\frac{1-\theta}{\theta}<1$. Therefore, invoking \Cref{lem:inequality lemma} (iii),
		$$\lrsquare{S^{(t-1)}-S^{(t)}+\omega\eps^{(t)}}^{\frac{1-\theta}{\theta}}\le\lrsquare{S^{(t-1)}-S^{(t)}}^{\frac{1-\theta}{\theta}}+\omega(\eps^{(t)})^{\frac{1-\theta}{\theta}}.$$
		Moreover, for any $t\ge k_4$,  
		$$S^{(t-1)}-S^{(t)}\le\lrsquare{S^{(t-1)}-S^{(t)}}^{\frac{1-\theta}{\theta}}.$$
		Combining the above two relations with \Cref{lem:ub on S}, one has, for any $t\ge k_4$,
		\begin{equation}
			S^{(t-1)}=\lrbracket{S^{(t-1)}-S^{(t)}}+S^{(t)}\le(2+C_2)\lrsquare{S^{(t-1)}-S^{(t)}}^{\frac{1-\theta}{\theta}}+\omega C_2(\eps^{(t)})^{\frac{1-\theta}{\theta}}+C_3E_{\theta}^{(t)}.
			\label{eqn:sublinear rate 3}
		\end{equation}
		With simple calculations, we reach
		\begin{equation*}
			\omega C_2(\eps^{(t)})^{\frac{1-\theta}{\theta}}+C_3E_{\theta}^{(t)}=\omega C_2(\bar\eps\tilde\rho^t)^{\frac{1-\theta}{\theta}}+C_3\lrsquare{\frac{\bar\eps\tilde\rho^t}{1-\tilde\rho}+\frac{\bar\eps^{2\theta}\tilde\rho^{2\theta t}}{(1-\tilde\rho^{2\theta})(1-\tilde\rho^2)^\theta}+\frac{\bar\eps^{2(1-\theta)}\tilde\rho^{2(1-\theta)t}}{(1-\tilde\rho^2)^\theta}}.
		\end{equation*}
		Putting the last equality into \cref{eqn:sublinear rate 3}, we have the existence of some $\bar M_2>0$ for which
		\begin{equation*}
			S^{(t-1)}\le(2+C_2)\lrsquare{S^{(t-1)}-S^{(t)}}^{\frac{1-\theta}{\theta}}+\bar M_2\tilde\rho^{\frac{1-\theta}{\theta}t},\quad\forall~t\ge k_4.
		\end{equation*}
		Since $\frac{\theta}{1-\theta}>1$, invoking \Cref{lem:inequality lemma} (ii), we obtain from the last inequality that
		$$(S^{(t-1)})^{\frac{\theta}{1-\theta}}\le2^{\frac{2\theta-1}{1-\theta}}\lrsquare{(2+C_2)^{\frac{\theta}{1-\theta}}\big(S^{(t-1)}-S^{(t)}\big)+\bar M_2^{\frac{\theta}{1-\theta}}\tilde\rho^t},\quad\forall~t\ge k_4,$$
		which further yields
		\begin{equation}
			S^{(t)}\le S^{(t-1)}-C_4(S^{(t-1)})^{\frac{\theta}{1-\theta}}+C_5\tilde\rho^t,\quad\forall~t\ge k_4,
			\label{eqn:sublinear rate 5}
		\end{equation}
		where $C_4:=2^{-\frac{2\theta-1}{1-\theta}}(2+C_2)^{-\frac{\theta}{1-\theta}}$, $C_5:=\bar M_2^{\frac{\theta}{1-\theta}}(2+C_2)^{-\frac{\theta}{1-\theta}}$. 
		
		\par Let $h_{\theta}:\R_+\to\R$ be defined as $h_{\theta}(x):=x^{\frac{\theta}{1-\theta}}$. Since $\frac{\theta}{1-\theta}>1$, $h_{\theta}$ is convex on $\R_+$. Hence, for a fixed $\xi\ge0$,
		\begin{align}
			&(S^{(t-1)})^{\frac{\theta}{1-\theta}}-(\xi t^{-\frac{1-\theta}{2\theta-1}})^{\frac{\theta}{1-\theta}}=h_{\theta}(S^{(t-1)})-h_{\theta}(\xi t^{-\frac{1-\theta}{2\theta-1}})\nonumber\\
			\ge&h_{\theta}'(\xi t^{-\frac{1-\theta}{2\theta-1}})\lrsquare{S^{(t-1)}-\xi t^{-\frac{1-\theta}{2\theta-1}}}=\frac{\theta\xi^{\frac{2\theta-1}{1-\theta}}}{(1-\theta)t}\lrsquare{S^{(t-1)}-\xi t^{-\frac{1-\theta}{2\theta-1}}}.\nonumber
		\end{align}
		Plugging the last inequality into \cref{eqn:sublinear rate 5}, one has, for any $t\ge k_4$ (possibly after enlargement),
		\begin{align}
			S^{(t)}&\le S^{(t-1)}-C_4\left[(S^{(t-1)})^{\frac{\theta}{1-\theta}}-(\xi t^{-\frac{1-\theta}{2\theta-1}})^{\frac{\theta}{1-\theta}}\right]-C_4(\xi t^{-\frac{1-\theta}{2\theta-1}})^{\frac{\theta}{1-\theta}}+C_5\tilde\rho^t\nonumber\\
			&\le S^{(t-1)}-\frac{C_4\theta\xi^{\frac{2\theta-1}{1-\theta}}}{(1-\theta)t}\lrsquare{S^{(t-1)}-\xi t^{-\frac{1-\theta}{2\theta-1}}}-C_4(\xi t^{-\frac{1-\theta}{2\theta-1}})^{\frac{\theta}{1-\theta}}+C_5\tilde\rho^t\nonumber\\
			&=\left[1-\frac{C_4\theta\xi^{\frac{2\theta-1}{1-\theta}}}{(1-\theta)t}\right]S^{(t-1)}+\frac{C_4\frac{2\theta-1}{1-\theta}\xi^{\frac{\theta}{1-\theta}}}{t^{\frac{\theta}{2\theta-1}}}+C_5\tilde\rho^t\nonumber\\
			&\le\left[1-\frac{C_4\theta\xi^{\frac{2\theta-1}{1-\theta}}}{(1-\theta)t}\right]S^{(t-1)}+\frac{C_4\frac{2\theta-1}{1-\theta}\xi^{\frac{\theta}{1-\theta}}+C_5}{t^{\frac{\theta}{2\theta-1}}}.\nonumber
		\end{align}
		Thus, after choosing $\xi$ such that $C_4\theta\xi^{\frac{2\theta-1}{1-\theta}}>\frac{1-\theta}{2\theta-1}$, we conclude from \Cref{lem:chung's inequality} that
		$$S^{(t)}\le\mathcal{O}\lrbracket{t^{-\frac{1-\theta}{2\theta-1}}},\quad\forall~t\ge k_4,$$
		which completes the proof of statement (iii) after combination with \cref{eqn:rate start point 1}.
	\end{proof}
	
	\par Due to the solution errors in solving \cref{eqn:prox-linear subprob new}, the finite termination of \palme~when $\theta=0$ seems to go beyond the reach of \palmi, no matter how fast $\{\eps^{(k)}\}$ decreases. Note that we separate statements (i) and (ii) in \Cref{thm:convergence rates exponential} to indicate that $\rho_1$ and $\rho_2$ can take different values.
	
	\par When $\{\eps^{(k)}\}$ decays sublinearly, only sublinear rates are achievable, regardless of the value of $\theta$. Below, we first give an auxiliary lemma, whose proof is straightforward and thus omitted.
	
	\begin{lem}\label{lem:tau}
		Let $\theta\in(\frac{1}{2},1)$ and $\ell>\frac{\theta+1}{2\theta}$. Define
		$$\tau(\theta,\ell):=\min\lrbrace{\frac{1-\theta}{\theta}\ell,\ell-1,(2\ell-1)\theta-1,(2\ell-1)(1-\theta)}.$$
		Then $\tau(\theta,\ell)$ has the following closed-form expression:
		\begin{equation*}
			\tau(\theta,\ell)=\left\{\begin{array}{ll}
				\frac{1-\theta}{\theta}\ell, & \text{if }\ell\in\left[\frac{\theta}{2\theta-1},\infty\right),\\
				(2\ell-1)\theta-1, & \text{if }\ell\in\left(\frac{\theta+1}{2\theta},\frac{\theta}{2\theta-1}\right].
			\end{array}\right.
		\end{equation*}
	\end{lem}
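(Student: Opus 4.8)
The plan is to view $\tau(\theta,\ell)$, for a fixed $\theta\in(\tfrac12,1)$, as the pointwise minimum of four \emph{affine} functions of $\ell$, namely $g_1(\ell):=\frac{1-\theta}{\theta}\ell$, $g_2(\ell):=\ell-1$, $g_3(\ell):=(2\ell-1)\theta-1$, and $g_4(\ell):=(2\ell-1)(1-\theta)$, whose slopes are $\frac{1-\theta}{\theta}$, $1$, $2\theta$, and $2(1-\theta)$ respectively. The key structural observation I would establish first is that these four lines are \emph{concurrent}: evaluating each at $\ell^\ast:=\frac{\theta}{2\theta-1}$ yields the common value $\frac{1-\theta}{2\theta-1}$. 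Indeed $g_1(\ell^\ast)=\frac{1-\theta}{\theta}\cdot\frac{\theta}{2\theta-1}=\frac{1-\theta}{2\theta-1}$, and a short computation using $(\theta+1)(2\theta-1)=2\theta^2+\theta-1$ shows that $g_3(\ell^\ast)$, $g_4(\ell^\ast)$, and $g_2(\ell^\ast)=\ell^\ast-1$ all collapse to the same quantity.

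Once concurrency at $\ell^\ast$ is in hand, the minimum of affine functions passing through a single common point is decided entirely by slope ordering: to the right of $\ell^\ast$ the line of smallest slope lies lowest, and to the left the line of largest slope lies lowest. So the second step is to order the four slopes. Using only $\theta\in(\tfrac12,1)$ I would show $\frac{1-\theta}{\theta}<2(1-\theta)<1<2\theta$, so that $g_1$ has the smallest slope and $g_3$ the largest; the first inequality reduces to $\frac{1}{\theta}<2$ after dividing by $1-\theta>0$, while the other two both reduce to $\theta>\tfrac12$. Concurrency then gives $\tau(\theta,\ell)=g_1(\ell)=\frac{1-\theta}{\theta}\ell$ for $\ell\ge\ell^\ast$ and $\tau(\theta,\ell)=g_3(\ell)=(2\ell-1)\theta-1$ for $\ell\le\ell^\ast$ (the two agreeing at $\ell=\ell^\ast$), which is exactly the asserted closed form.

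I would close by recording the two facts that make the statement well-posed. First, the interval $\bigl(\frac{\theta+1}{2\theta},\frac{\theta}{2\theta-1}\bigr]$ is nonempty, since $(\theta+1)(2\theta-1)=2\theta^2+\theta-1<2\theta^2$ forces $\frac{\theta+1}{2\theta}<\frac{\theta}{2\theta-1}$. Second, the hypothesis $\ell>\frac{\theta+1}{2\theta}$ is precisely what guarantees $g_3(\ell)>0$, so that $\tau$ remains positive throughout the stated range, which is what the subsequent rate analysis needs.

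As for difficulty, there is no genuine obstacle here; the lemma is elementary. The one step I would double-check most carefully is the concurrency identity, because it is the single fact that collapses what would otherwise be several pairwise case comparisons into a one-line slope argument. A more pedestrian alternative is to compare $g_1$ and $g_3$ directly against each of the remaining functions and verify that every relevant crossover occurs at the same $\ell^\ast$; that route reaches the same conclusion but requires strictly more bookkeeping.
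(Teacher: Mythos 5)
Your proof is correct. The paper itself omits the proof of this lemma (it is dismissed as ``straightforward''), so there is no argument of record to compare against; your write-up in fact supplies the cleanest justification one could ask for. The key observation --- that the four affine functions $g_1,\dots,g_4$ are concurrent at $\ell^\ast=\frac{\theta}{2\theta-1}$ with common value $\frac{1-\theta}{2\theta-1}$ --- is easily verified (each evaluation reduces to $\frac{1-\theta}{2\theta-1}$), and the slope ordering $\frac{1-\theta}{\theta}<2(1-\theta)<1<2\theta$ follows from $\theta\in(\frac12,1)$ exactly as you state. Concurrency plus slope ordering immediately gives that the minimum is attained by the steepest line to the left of $\ell^\ast$ and the flattest line to the right, which is the asserted closed form; this replaces the pedestrian route of six pairwise comparisons. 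Your closing remarks on the nonemptiness of $\bigl(\frac{\theta+1}{2\theta},\frac{\theta}{2\theta-1}\bigr]$ and on positivity of $\tau$ (via $g_3(\ell)>0\iff\ell>\frac{\theta+1}{2\theta}$) are also correct and consistent with how the lemma is used in Theorem 5.2 of the paper.
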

	
	\begin{thm}\label{thm:convergence rates sublinear}
		Suppose the assumptions in \Cref{thm:global convergence} hold with $\eps^{(k)}=\frac{\bar\eps}{(k+1)^{\ell}}$ for any $k\ge0$, where $\ell>1$. Let $\bar\z$ be the unique limit point of the sequence $\{\z^{(k)}\}$ generated by \palmi~and $\theta\in[0,1)$ is the \L ojasiewicz exponent of $F$ at $\bar\z$. Assume that $v^{(k)}>\bbar F$ for any $k\ge0$. Then, for all sufficiently large $k$,
		\begin{equation}
			\snorm{\z^{(k)}-\bar\z}\le\left\{\begin{array}{ll}
				\mathcal{O}\lrbracket{k^{-\frac{1-\theta}{2\theta-1}}}, & \text{if }\ell\in\left[\frac{\theta}{2\theta-1},\infty\right)\text{ and }\theta\in(\frac{1}{2},1),\\
				\mathcal{O}\lrbracket{k^{-(\ell-1)}}, & \text{otherwise}.
			\end{array}\right.
			\label{eqn:fix ell best rate}
		\end{equation}
	\end{thm}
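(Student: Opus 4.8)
The plan is to reduce everything to controlling $S^{(t)}$. By \cref{eqn:rate start point 1} we have $\snorm{\z^{(t)}-\bar\z}\le S^{(t)}+\omega\sum_{k=t}^\infty\eps^{(k+1)}$, and for the present choice the tail is immediately $\mathcal{O}(t^{-(\ell-1)})$. Hence the asserted rate is the slower of the decay of $S^{(t)}$ and $t^{-(\ell-1)}$, and the entire task is to extract the decay of $S^{(t)}$ from \Cref{lem:ub on S}. Throughout I keep $v^{(k)}-\bbar F>0$, so that lemma applies.

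The crucial first step is the choice of the lifted exponent: I would take $\bar\theta:=\max\{\theta,\frac{\ell}{2\ell-1}\}$. Since $\frac{\ell}{2\ell-1}\in(\frac{1}{2},1)$ for $\ell>1$, this gives $\bar\theta\in[\theta,1)\cap(\frac{1}{2},1)$, so lifting is legitimate by \Cref{coro:larger theta} and $\frac{1-\bar\theta}{\bar\theta}<1$; moreover $(2\ell-1)\bar\theta\ge\ell>1$, which secures \Cref{cond:eps} (b) and the summability of every piece of $E_{\bar\theta}^{(t)}$. The arithmetic motivating $\frac{\ell}{2\ell-1}$ is that at this value the four quantities inside $\tau(\bar\theta,\ell)$ of \Cref{lem:tau} all collapse to $\ell-1$; because the first and last are decreasing and the third increasing in $\bar\theta$, this common value is exactly $\max_{\bar\theta}\tau(\cdot,\ell)$. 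When instead $\theta>\frac{\ell}{2\ell-1}$ (equivalently $\theta\in(\frac{1}{2},1)$ and $\ell\ge\frac{\theta}{2\theta-1}$) one cannot lift downward, so one is forced to keep $\bar\theta=\theta$, for which \Cref{lem:tau} gives $\tau(\theta,\ell)=\frac{1-\theta}{\theta}\ell$.

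With $\bar\theta$ fixed, the machinery mirrors the proof of \Cref{thm:convergence rates exponential} (iii). Since $\frac{1-\bar\theta}{\bar\theta}<1$, I would split the term $[S^{(t-1)}-S^{(t)}+\omega\eps^{(t)}]^{\frac{1-\bar\theta}{\bar\theta}}$ in \Cref{lem:ub on S} by \Cref{lem:inequality lemma} (iii), then use $S^{(t-1)}-S^{(t)}\le[S^{(t-1)}-S^{(t)}]^{\frac{1-\bar\theta}{\bar\theta}}$ (valid once the increments lie in $[0,1)$) to reach $S^{(t-1)}\le(2+C_2)[S^{(t-1)}-S^{(t)}]^{\frac{1-\bar\theta}{\bar\theta}}+\mathcal{O}(t^{-\tau(\bar\theta,\ell)})$, the error exponent being precisely $\tau(\bar\theta,\ell)$ after collecting the rates of $(\eps^{(t)})^{\frac{1-\bar\theta}{\bar\theta}}$ and the three summands of $E_{\bar\theta}^{(t)}$. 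Raising to the power $\frac{\bar\theta}{1-\bar\theta}>1$ via \Cref{lem:inequality lemma} (ii) yields $S^{(t)}\le S^{(t-1)}-C_4(S^{(t-1)})^{\frac{\bar\theta}{1-\bar\theta}}+\mathcal{O}(t^{-\tau(\bar\theta,\ell)\frac{\bar\theta}{1-\bar\theta}})$. Comparing $(S^{(t-1)})^{\frac{\bar\theta}{1-\bar\theta}}$ with a trial sequence $\xi t^{-\frac{1-\bar\theta}{2\bar\theta-1}}$ through the convexity of $x\mapsto x^{\frac{\bar\theta}{1-\bar\theta}}$ linearizes this into the Chung form $S^{(t)}\le(1-\frac{b}{t})S^{(t-1)}+\frac{d}{t^{P}}$, with $P:=\min\{\frac{\bar\theta}{2\bar\theta-1},\,\tau(\bar\theta,\ell)\frac{\bar\theta}{1-\bar\theta}\}$ and $b$ made $>P-1$ by enlarging $\xi$; \Cref{lem:chung's inequality} then gives $S^{(t)}=\mathcal{O}(t^{-(P-1)})$.

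It remains to evaluate $P-1$ in the two regimes and combine with the tail. If $\theta\in(\frac{1}{2},1)$ and $\ell\ge\frac{\theta}{2\theta-1}$ (so $\bar\theta=\theta$), then $\tau(\theta,\ell)\frac{\theta}{1-\theta}=\ell\ge\frac{\theta}{2\theta-1}$, whence $P=\frac{\theta}{2\theta-1}$ and $S^{(t)}=\mathcal{O}(t^{-\frac{1-\theta}{2\theta-1}})$; as $\ell-1\ge\frac{1-\theta}{2\theta-1}$ here, \cref{eqn:rate start point 1} produces the first branch. Otherwise $\bar\theta=\frac{\ell}{2\ell-1}$, for which $\tau(\bar\theta,\ell)=\ell-1$ and $\frac{\bar\theta}{2\bar\theta-1}=\ell=\tau(\bar\theta,\ell)\frac{\bar\theta}{1-\bar\theta}$, so $P=\ell$, $S^{(t)}=\mathcal{O}(t^{-(\ell-1)})$, and this matches the tail, giving the second branch. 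The main obstacle is exactly this exponent bookkeeping: one must check that the single choice $\bar\theta=\frac{\ell}{2\ell-1}$ simultaneously equalizes all four terms of $\tau$, aligns the two error exponents in the Chung recursion at $\ell$, and respects $\bar\theta\ge\theta$, so that the sharp rate $t^{-(\ell-1)}$ is genuinely attained rather than merely dominated by a slower $t^{-\tau(\bar\theta,\ell)}$ that a suboptimal $\bar\theta$ would produce.
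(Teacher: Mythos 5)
Your proposal is correct and follows essentially the same route as the paper: the reduction via \cref{eqn:rate start point 1}, the bound of \Cref{lem:ub on S} with a lifted exponent, the recursion linearized by convexity into Chung's \Cref{lem:chung's inequality}, and the exponent bookkeeping through \Cref{lem:tau} are all the same steps the paper takes. The only difference is organizational: you fix $\bar\theta=\max\{\theta,\frac{\ell}{2\ell-1}\}$ upfront (having observed that $\frac{\ell}{2\ell-1}$ equalizes all four terms of $\tau$ and makes both Chung exponents equal $\ell$), whereas the paper derives the rate \cref{eqn:rate for any theta} for every admissible $\bar\theta>\max\{\sqrt{1/(2\ell-1)},\frac{1}{2}\}$ and then maximizes over $\bar\theta$ at the end, arriving at exactly your two choices and hence the same two branches.
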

	
	\begin{proof}
		\par The proof of this theorem is analogous to that of statement (iii) in \Cref{thm:convergence rates exponential}. By \Cref{rem:choice of eps}, any $\bar\theta\in[\theta,1)$: $\bar\theta>\max\{\sqrt{\frac{1}{2\ell-1}},\frac{1}{2}\}$ complies with \Cref{cond:eps} (b). For such $\bar\theta$, \Cref{lem:ub on S} is valid. From the proof of \cref{prop:properties of acc pt set} (i) and the choice of $\{\eps^{(k)}\}$, there exists $k_5\in\N$ such that $S^{(t-1)}-S^{(t)}+\omega\eps^{(t)}\in[0,1)$ for any $t\ge k_5$. By the choice of $\{\eps^{(k)}\}$,
		\begin{align*}
			&\omega C_2(\eps^{(t)})^{\frac{1-\bar\theta}{\bar\theta}}+C_3E_{\bar\theta}^{(t)}\\
			=&\omega C_2(\eps^{(t)})^{\frac{1-\bar\theta}{\bar\theta}}+C_3\lrsquare{\sum_{k=t}^\infty\eps^{(k)}+\sum_{k=t}^\infty(e^{(k)})^{\bar\theta}+(e^{(t)})^{1-\bar\theta}}\\
			\le&\frac{\omega C_2}{(t+1)^{\frac{1-\bar\theta}{\bar\theta}\ell}}+C_3\lrsquare{\sum_{k=t}^\infty\frac{\bar\eps}{(k+1)^\ell}+\sum_{k=t}^\infty\frac{\bar\eps^{2\bar\theta}}{(2\ell-1)^{\bar\theta}k^{(2\ell-1)\bar\theta}}+\lrbracket{\sum_{k=t}^\infty\frac{\bar\eps^2}{(k+1)^{2\ell}}}^{1-\bar\theta}}\\
			\le&\frac{\omega C_2}{t^{\frac{1-\bar\theta}{\bar\theta}\ell}}+C_3\lrsquare{\frac{\bar\eps}{(\ell-1)t^{\ell-1}}+\frac{\frac{\bar\eps^{2\bar\theta}}{(2\ell-1)^{\bar\theta}[(2\ell-1)\bar\theta-1]}}{(t-1)^{(2\ell-1)\bar\theta-1}}+\frac{\bar\eps^{2(1-\bar\theta)}}{(2\ell-1)^{1-\bar\theta}t^{(2\ell-1)(1-\bar\theta)}}}.
		\end{align*}
		Noticing the definition of $\tau$ in \Cref{lem:tau} and putting the last inequality into \cref{eqn:sublinear rate 3}, we have the existence of some $\bar M_3>0$ for which
		\begin{equation*}
			S^{(t-1)}\le(2+C_2)\lrsquare{S^{(t-1)}-S^{(t)}}^{\frac{1-\bar\theta}{\bar\theta}}+\frac{\bar M_3}{t^{\tau(\bar\theta,\ell)}},\quad\forall~t\ge k_5.
		\end{equation*}
		Following the similar arguments in the proof of statement (iii) in \Cref{thm:convergence rates exponential}, one could obtain
		$$S^{(t)}\le\left[1-\frac{C_4\bar\theta\xi^{\frac{2\bar\theta-1}{1-\bar\theta}}}{(1-\bar\theta)t}\right]S^{(t-1)}+\frac{C_4\frac{2\bar\theta-1}{1-\bar\theta}\xi^{\frac{\bar\theta}{1-\bar\theta}}+C_6}{t^{\min\{\frac{\bar\theta}{2\bar\theta-1},\tau(\bar\theta,\ell)\frac{\bar\theta}{1-\bar\theta}\}}},\quad\forall~t\ge k_5,$$
		where $C_6:=\bar M_3^{\frac{\bar\theta}{1-\bar\theta}}(2+C_2)^{-\frac{\theta}{1-\theta}}$. Note that $\min\{\frac{\bar\theta}{2\bar\theta-1},\tau(\bar\theta,\ell)\frac{\bar\theta}{1-\bar\theta}\}>1$ due to $\bar\theta>\sqrt{\frac{1}{2\ell-1}}$. Thus, after choosing $\xi$ such that $C_4\bar\theta\xi^{\frac{2\bar\theta-1}{1-\bar\theta}}>\min\{\frac{\bar\theta}{2\bar\theta-1},\tau(\bar\theta,\ell)\frac{\bar\theta}{1-\bar\theta}\}-1$, we conclude from \Cref{lem:chung's inequality} that
		$$S^{(t)}\le\mathcal{O}\lrbracket{t^{-[\min\{\frac{\bar\theta}{2\bar\theta-1},\tau(\bar\theta,\ell)\frac{\bar\theta}{1-\bar\theta}\}-1]}},\quad\forall~t\ge k_5.$$
		Invoking \Cref{lem:tau} and combining \cref{eqn:rate start point 1}, it is not difficult to derive
		\begin{equation}
			\snorm{\z^{(t)}-\bar\z}\le\left\{\begin{array}{ll}
				\mathcal{O}\lrbracket{t^{-\frac{1-\bar\theta}{2\bar\theta-1}}}, & \text{if }\ell\in[\frac{\bar\theta}{2\bar\theta-1},\infty),\\
				\mathcal{O}\lrbracket{t^{-\frac{2\bar\theta^2\ell-\bar\theta^2-1}{1-\bar\theta}}}, & \text{if }\ell\in(\frac{\bar\theta^2+1}{2\bar\theta^2},\frac{\bar\theta}{2\bar\theta-1}),
			\end{array}\right.\quad\forall t\ge k_5.
			\label{eqn:rate for any theta}
		\end{equation}
		
		\par Since \cref{eqn:rate for any theta} holds for any $\bar\theta\in[\theta,1):\bar\theta>\max\{\sqrt{\frac{1}{2\ell-1}},\frac{1}{2}\}$ and $k_5$ does not rely on its value, the best rate exponent must be attained at one of the following two:
		$$\text{(A)}~\max_{\bar\theta\in[\frac{\ell}{2\ell-1},1),\bar\theta\ge\theta}~\frac{1-\bar\theta}{2\bar\theta-1};\quad\text{(B)}~\max_{\bar\theta\in(\sqrt{\frac{1}{2\ell-1}},\frac{\ell}{2\ell-1}],\bar\theta\ge\theta}~\frac{2\bar\theta^2\ell-\bar\theta^2-1}{1-\bar\theta}.$$ 
		Note that $\bar\theta\ge\frac{\ell}{2\ell-1}$ holds for any $\bar\theta\ge\theta$ if and only if $\ell\ge\frac{\theta}{2\theta-1}$ and $\theta\in(\frac{1}{2},1)$. Suppose $\ell\ge\frac{\theta}{2\theta-1}$ and $\theta\in(\frac{1}{2},1)$. Then the best rate exponent is achieved at (A) with just $\bar\theta=\theta$, establishing the first line in \cref{eqn:fix ell best rate}. Suppose otherwise, it is easy to check that the optimal values of both (A) and (B) are $\ell-1$ with the minimizer $\bar\theta=\frac{\ell}{2\ell-1}$, leading to the second line of \cref{eqn:fix ell best rate}. The proof is complete.
	\end{proof}
	
	\par The first line of \cref{eqn:fix ell best rate} recovers the result for \palme~\cite{bolte2014proximal2,Xu2013A2}. In view of this and statement (iii) in \Cref{thm:convergence rates exponential}, it appears that the solutions errors in solving \cref{eqn:prox-linear subprob new} do not affect the asymptotic rates of \palmi~at all if $\{\eps^{(k)}\}$ decreases fast enough. 
	
	\section{Numerical Experiments}\label{sec:numerical experiments}
	
	\par In this section, we use numerical results to validate the convergence of \palmi~and demonstrate its merits over \palme~and \palmf. All the numerical experiments presented are run in a platform with Intel(R) Xeon(R) Gold 6242R CPU @ 3.10GHz and 510GB RAM running \textsc{Matlab} R2018b under Ubuntu 20.04.
	
	\vskip 0.1cm
	
	\subsection{Optimization with Linear Constraints}
	
	\par The first class of problems under consideration is the one discussed in \cite{hu2021global}, i.e., the $\ell_1$ penalized discretized multi-marginal optimal transport problems in $\R^d$ arising from quantum physics, which generally take the form
	\begin{equation}
		\begin{array}{cl}
			\min\limits_{\{X_i\}_{i=2}^N} & \sum\limits_{i=2}^N\inner{X_i,\Lambda C}+\sum\limits_{i<j}\lrbracket{\inner{X_i,\Lambda X_jC}+\beta\inner{X_i,X_j}}\\
			\st & X_i\in\calS:=\{W\in\R^{K\times K}:W\one=\one,~W^\T\varrho=\varrho,~\trace(W)=0,~W\ge0\},~~\forall~i.
		\end{array}
		\label{eqn:app1 quantum physics}
	\end{equation}
	Here, $\beta>0$ is the penalty parameter and $N$, $K\in\N$ refer, respectively, to the number of electrons in the system and finite elements $\mathcal{T}:=\{e_k\}_{k=1}^K\subseteq\R^d$ discretizing a bounded domain $\Omega$. The vector $\varrho:=[\varrho_1,\ldots,\varrho_K]^\T\in\R^K$ is defined as $\varrho_k:=\int_{e_k}\rho(\mb{r})\,\mathrm{d}\mb{r}$ for any $k\in\{1,\ldots,K\}$, where $p:\R^d\to\R_+$ is the single-electron density of the system. The diagonal matrix $\Lambda:=\Diag(\varrho)$ and $C=(C_{ij})$ denotes the discretized Coulomb cost matrix whose diagonal elements are all set to zero to avoid numerical instability:
	$$C_{ij}:=\left\{\begin{array}{ll}
		\snorm{\mb{d}_i-\mb{d}_j}^{-1}, & \text{if }i\ne j,\\
		0, & \text{otherwise}
	\end{array}\right.$$
	with $\{\mb{d}_k\}_{k=1}^K\subseteq\R^d$ being the barycenters of elements $\{e_k\}_{k=1}^K$. For brevity, let $\mathcal{B}:\R^{K\times K}\to\R^{2K+1}$ be a linear operator defined as
	$$\mathcal{B}(W):=[\one^\T W^\T,~\varrho^\T W,~\trace(W)]^\T\in\R^{2K+1},\quad\forall~W\in\R^{K\times K},$$
	and $\mb{b}:=[\one^\T,~\varrho^\T,~0]^\T\in\R^{2K+1}$. Then the set $\calS$ can be expressed simply as $\calS=\{W:\mathcal{B}(W)=\mb{b},~W\ge0\}$. This type of constraints has been mentioned in \Cref{exm:linear constraint}. In our experiments, we consider a 1D system with $N=3$ electrons and the domain $\Omega=[-1,1]$; the density is a normalized Gaussian, namely,
	$$\rho(x)\propto\exp\lrbracket{-x^2/\sqrt{\pi}},\quad\forall~x\in\R.$$
	We adopt an equal-mass discretization so that all the entries in $\varrho$ are identical. 
	
	\par We compare the performances of \palme~and \palmi~when solving \cref{eqn:app1 quantum physics} with $K=36$; that is to say, the number of variables equals $36^2\times2=2592$. The proximal parameter is fixed at $\sigma_i^{(k)}\equiv\sigma=10^{-2}$ for $i=1,\ldots,n$ and $k\ge0$. In both \palme~and \palmi, we adapt the semismooth Newton-CG (\ssncg) proposed in \cite{li2020efficient} to efficiently compute the projection $\PP_{\calS}$. As noted in \Cref{exm:linear constraint}, the infeasibility is inevitable. In this context, particularly with \ssncg~as the subsolver, the residual function $r_i(X_i^{(k+1)},\blam_i^{(k+1)},\tilde X_i^{(k)})$ becomes
	\begin{equation*}
		\max\lrbrace{-\inner{\blam_i^{(k+1)},\mathcal{B}(X_i^{(k+1)})-\mb{b}},0}+\snorm{\mathcal{B}(X_i^{(k+1)})-\mb{b}}_\infty,
		\label{eqn:app1 monitor}
	\end{equation*}
	where $\blam_i^{(k+1)}\in\R^{2K+1}$ is an approximate dual solution given by \ssncg. In \palme, we set $\{\eps^{(k)}\equiv10^{-7}\}_{k\ge0}$ such that all the subproblems are solved to high accuracy, whereas in \palmi, we pick a nonincreasing sequence $\big\{\eps^{(k)}=\max\{\frac{10^{-1}}{(k+1)^{\ell}},10^{-7}\}\big\}_{k\ge0}$ with $\ell=0.75$. Note that by \Cref{assume:compactness}, the stationarity point of \cref{eqn:app1 quantum physics} can be characterized by the Karush-Kuhn-Tucker (KKT) conditions. The outer \palme~or \palmi~framework is therefore stopped once the relative KKT violation is smaller than $10^{-6}$. 
	
	\par We first compare the performances of \palme~and \palmi~with random initializations. The built-in ``\texttt{rand}'' function in \textsc{Matlab} is invoked to generate 100 initial points and then we plot out the averaged history of the relative KKT violation for both \palme~and \palmi; see \Cref{fig:evolve birkhoff} (left). 
	\begin{figure}[htbp]
		\centering
		\includegraphics[width=.75\textwidth]{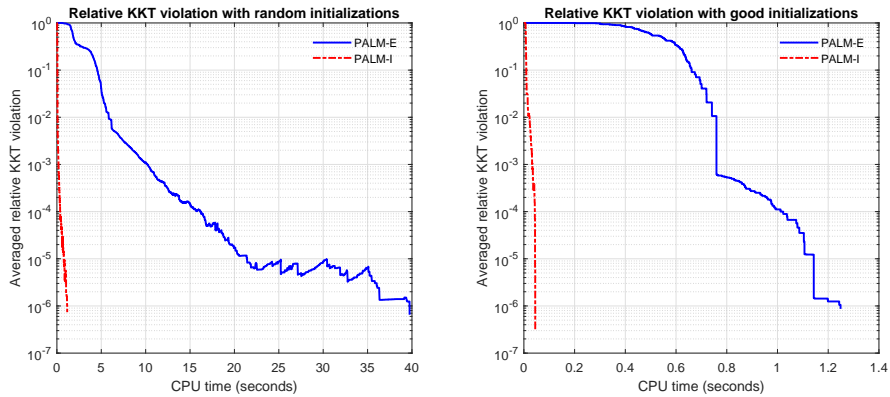}
		\caption{The average relative KKT violation history of \palme~and \palmi~with different types of initializations on \cref{eqn:app1 quantum physics}. Left: random initializations. Right: good initializations.}
		\label{fig:evolve birkhoff}
	\end{figure}
	The average CPU time used by \palme~is about 15.97 seconds, while that of \palmi~is approximately 0.46 seconds. One could then easily conclude the superiority of \palmi~with random initializations in terms of CPU time. Since \cref{eqn:app1 quantum physics} is nonconvex, it is interesting and necessary to inspect the differences between the terminating objective values of \palme~and \palmi. We plot in \Cref{fig:diff birkhoff} with bullets the absolute differences between the terminating objective values and the optimal one $f(Z^*)$. 
	\begin{figure}[htbp]
		\centering
		\includegraphics[width=.7\textwidth]{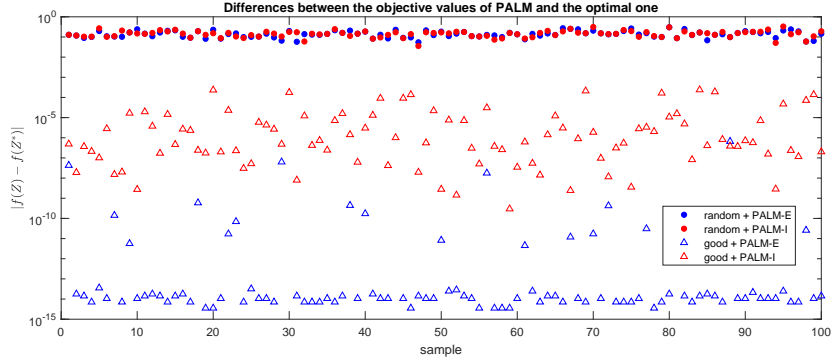}
		\caption{The absolute differences between the objective values given by \palm~and the optimal one. The bullets stand for results with random initializations, while the triangles for results with good initializations.}
		\label{fig:diff birkhoff}
	\end{figure}
	We can observe that, starting with randomly generated initial points, \palme~and \palmi~often stop at points of similar qualities.
	
	\par We then conduct a performance comparison between \palme~and \palmi~with good initializations. The good initial points could be generated by random perturbation around the discretized optimal solution $Z^*$ supplied in \cite{colombo2015multimarginal}. The built-in ``\texttt{rand}'' function is invoked again to generate 100 good initial points, whose quantity of deviation from $Z^*$ is at most $10^{-3}$. We plot out the averaged history of the minimum achieved relative KKT violation for both \palme~and \palmi; see \Cref{fig:evolve birkhoff} (right). Moreover, the average CPU time used by \palme~is about 0.85 seconds, while that of \palmi~is only approximately 0.03 seconds. These reflect the considerable time advantage of \palmi~over \palme~in a neighborhood of optimal solution. Incidentally, the infeasible nature of \palmi~does not ruin much the solution quality; the maximum absolute difference between the terminating objective value and the optimal one is merely $2.39\times10^{-4}$, and it is less than $10^{-5}$ on over 75\% samples.
	
	\par The numerical results in this subsection reflect that \palmi~converges well and is clearly more efficient than \palme~even with infeasibility. The efficiency is brought by the infeasible subsolver \ssncg, whose usage is ensured by our theoretical results. 
	
	\subsection{Optimization with Nonlinear Constraints}
	
	\par\noindent The second class of testing problems involves nonconvex quadratic objective functions and multiple ellipsoidal constraints, having the form
	\begin{equation}
		\begin{array}{cl}
			\displaystyle\min_{\z} & \displaystyle\frac{1}{2}\inner{\z,A\z}+\inner{\mb{b},\z}\\
			\st & \displaystyle\frac{1}{2}\inner{\x_i,B_i\x_i}+\inner{\mb{c}_i,\x_i}\le1,~i=1,\ldots,n
		\end{array}
		\label{eqn:app2 ellipsoid}
	\end{equation}
	where, for $i=1,\ldots,n$, $m_i=m\in\N$; $A\in\mathbb{S}^{mn}$, while $\{B_i\}_{i=1}^n\subseteq\mathbb{S}_{++}^{m}$; $\mb{b}\in\R^{mn}$ and $\{\mb{c}_i\}_{i=1}^n\subseteq\R^{m}$. This problem class is related to several domains \cite{he2010approximation,kuvcera2008convergence,liu2020topology}, as noted in \Cref{exm:nonlinear constraint}. In our implementation, $A$ and $\mb{b}$ are generated by the built-in function ``\texttt{randn}'' in \textsc{Matlab}. To form $\{B_i=(b_{i,jk})\}_{i=1}^n$, we adopt the construction in \cite{dai2006fast,jia2017comparison}: 
	$$b_{i,jk}=10^{\frac{j-1}{m-1}\text{ncond}_i},~\text{if }j=k;~~0,~\text{otherwise}.$$
	It is easy to see that $\text{ncond}_i\in\R_{++}$ controls the condition number and the spectrum of each $B_i$ is spread in $[1,10^{\text{ncond}_i}]$.
	
	\par We compare the performances of \palme, \palmf, and \palmi~when solving \cref{eqn:app2 ellipsoid} with $n=5$ and $m=500$; that is, the number of variables is $2500$. We select $\{\text{ncond}_i\}_{i=1}^n=\{3.00,3.25,3.50,3.75,4.00\}$. The vectors $\{\mb{c}_i\}_{i=1}^n$ are set to be all-zero so that all ellipsoids are concentric. The proximal parameter is fixed at $\sigma_i^{(k)}\equiv\sigma=1$ for $i=1,\ldots,n$ and $k\ge0$. The three algorithms are armed with different subsolvers. Specifically, both \palme~and \palmi~invoke the self-adative alternating direction methods of multiplier proposed in \cite{jia2017comparison} (\sadmm); \palmf~uses the feasible hybrid projection algorithm in \cite{dai2006fast} (\hp). Note that the iterates produced by \sadmm~are not necessarily feasible. For $i=1,\ldots,n$ and $k\ge0$, we terminate \hp~within \palmf~if 
	$$\snorm{\x_i^{(k+1)}-\tilde\x_i^{(k)}+\lambda_{i,\hp}^{(k+1)}(B_i\x_i^{(k+1)}+\mb{c}_i)}\le\frac{\eta}{2}\snorm{\x_i^{(k+1)}-\x_i^{(k)}}_\infty.$$
	where $\eta=0.99\sigma$, and $\lambda_{i,\hp}^{(k+1)}\ge0$ estimates the multiplier associated with the ellipsoidal constraint. It is not difficult to verify that the above inexact criteria help produce iterates fulfilling the assumption in \cite{hua2016block}. Regarding \sadmm~in \palme~and \palmi, the residual function $r_i(\x_i^{(k+1)},\lambda_{i,\sadmm}^{(k+1)},\tilde\x_i^{(k)})$ becomes
	\begin{align*}
		&\max\lrbrace{\inner{\x_i^{(k+1)},\x_i^{(k+1)}-\tilde\x_i^{(k)}+\lambda_{i,\sadmm}^{(k+1)}(B_i\x_i^{(k+1)}+\mb{c}_i)},0}\nonumber\\
		+&\norm{\x_i^{(k+1)}-\tilde\x_i^{(k)}+\lambda_{i,\sadmm}^{(k+1)}(B_i\x_i^{(k+1)}+\mb{c}_i)}_\infty\nonumber\\
		+&\lambda_{i,\sadmm}^{(k+1)}\max\lrbrace{-\lrsquare{\frac{1}{2}\inner{\x_i^{(k+1)},B_i\x_i^{(k+1)}}+\inner{\mb{c}_i,\x_i^{(k+1)}}-\alpha_i},0}\label{eqn:app2 monitor}\\
		+&\max\lrbrace{\frac{1}{2}\inner{\x_i^{(k+1)},B_i\x_i^{(k+1)}}+\inner{\mb{c}_i,\x_i^{(k+1)}}-\alpha_i,0},\nonumber
	\end{align*}
	where $\lambda_{i,\sadmm}^{(k+1)}\in\R_+$ is an approximate dual solution given by \sadmm. In \palme, we set $\{\eps^{(k)}\equiv10^{-6}\}_{k\ge0}$; for \palmi, we choose $\big\{\eps^{(k)}=\max\{\frac{10^{-1}}{(k+1)^{\ell}},10^{-6}\}\big\}_{k\ge0}$ with $\ell=0.75$. As in the previous subsection, the three outer frameworks are stopped once the relative KKT violation is smaller than $10^{-5}$.
	
	\par We invoke the built-in ``\texttt{randn}'' function in \textsc{Matlab} to generate 100 random initial points and then draw the averaged history of the relative KKT violation for the three algorithms; see \Cref{fig:evolve ellipsoid} (left).
	\begin{figure}[htbp]
		\centering
		\includegraphics[width=.8\textwidth]{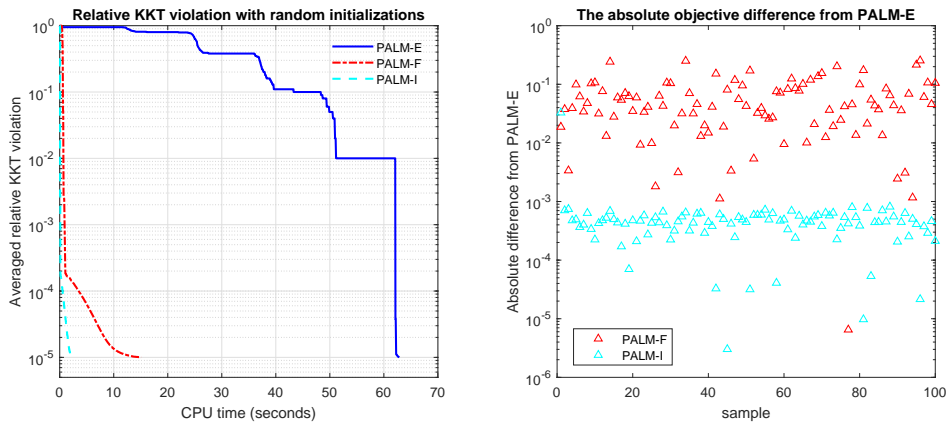}
		\caption{Left: the average relative KKT violation history of \palme, \palmf, and \palmi~with random initializations on \cref{eqn:app2 ellipsoid}. Right: the absolute objective differences from \palme.}
		\label{fig:evolve ellipsoid}
	\end{figure}
	The respective average CPU times used by \palme, \palmf, and \palmi~are approximately 30.15 seconds, 9.26 seconds, and 1.74 seconds. One can observe that \palmi~takes the strengths of the infeasible subsolver \sadmm~and stands out with the best performance. We also make a comparison among the terminating objective values of the three algorithms. Since for \cref{eqn:app2 ellipsoid}, the optimal values are inaccessible, we take those given by \palme~as benchmark and inspect the absolute differences of \palmf~and \palmi~from \palme; see \Cref{fig:evolve ellipsoid} (right). It appears that, even equipped with an infeasible solver, \palmi~is capable of yielding objective values much closer than \palmf~to those of \palme. 
	
	\section{Conclusions}\label{sec:conclusions}
	
	\par We recognize by examples the indispensability of infeasible subsolvers in \palm~whenever constraints are complicated and illustrate through numerical simulations that \palmi~can be far more efficient than \palme~and \palmf. The shortage of existing works on \palmi~motivates us to analyze its convergence properties, particularly in the presence of objective value nonmonotonicity. We achieve this by constructing a monotonically decreasing surrogate sequence. Moreover, an implementable inexact criterion for subsolvers is devised for practical usage.
	
	\par Futural improvements can be anticipated in several lines. For example, one could incorporate nonsmooth regularization terms into objective function and handle infeasibility and nonsmoothness simultaneously. Besides, the assumptions, such as Hoffman-like error bound, may appear to be restrictive and call for further relaxation. Last but not least, it is worth investigating the convergence properties of \palmi~on problems with nonconvex constraints and designing implementable inexact criteria for those contexts.

	\normalem
	\bibliographystyle{siam}
	\bibliography{ref}
	
\end{document}